\def\RCS$#1: #2 ${\expandafter\def\csname RCS#1\endcsname{#2}}
\DeclareMathOperator{\Frob}{Frob} 
\DeclareMathOperator{\ab}{ab}
\DeclareMathOperator{\BC}{BC}
\newcommand{\kbar}{{\overline{k}}}
\newcommand{\To}{\longrightarrow} 
\newcommand{\into}{\hookrightarrow} 
\newcommand{\isoto}{\stackrel{\sim}{\To}}
\newcommand{\rec}{\operatorname{rec}}
\newcommand{\calO}{\mathcal{O}}
\newcommand{\R}{\mathbb{R}} 
\newcommand{\Z}{\mathbb{Z}} 
\newcommand{\A}{\mathbb{A}} 
\newcommand{\Q}{\mathbb{Q}}
\renewcommand{\gg}{\mathfrak{g}} 
\newcommand{\gh}{\mathfrak{h}}
\newcommand{\C}{\mathbb{C}}
\newcommand{\Gal}{\operatorname{Gal}} 
\newcommand{\GL}{\operatorname{GL}}
\newcommand{\gl}{\operatorname{\mathfrak{gl}}}
\newcommand{\PGL}{\operatorname{PGL}}
\newcommand{\Qbar}{\overline{\Q}} 
\newcommand{\GQ}{\Gal(\Qbar/\Q)}
\newcommand{\Qp}{\Q_p} 
\newcommand{\Qpbar}{\overline{\Q}_p} 
\newcommand{\Qlbar}{\overline{\Q}_{\ell}} 
\newcommand{\Fbar}{\overline{F}} 
\newcommand{\Fvbar}{\overline{F}_v} 
\newcommand{\Fwbar}{\overline{F}_w} 
\newcommand{\Fv}{F_v}
\newcommand{\Fw}{F_w}
\newcommand{\Res}{\operatorname{Res}}
\newcommand{\Spec}{\operatorname{Spec}} 
\newcommand{\Ind}{\operatorname{Ind}} 
\newcommand{\SL}{\operatorname{SL}} 
\newcommand{\ad}{\operatorname{ad}} 
\renewcommand{\sc}{\operatorname{sc}} 
\newcommand{\tr}{\operatorname{tr}} 
\newcommand{\End}{\operatorname{End}} 
\newcommand{\Hom}{\operatorname{Hom}}
\newcommand{\Aut}{\operatorname{Aut}}
\newcommand{\sep}{\operatorname{sep}}
\def\smallmat#1#2#3#4{\bigl(\begin{smallmatrix}{#1}&{#2}\\{#3}&{#4}\end{smallmatrix}\bigr)}
\newcommand{\Gdual}{{\widehat{G}}}
\newcommand{\Ttilde}{{\widetilde{T}}}
\newcommand{\Gtildedual}{\widehat{\widetilde{G}}}
\newcommand{\Tdual}{{\widehat{T}}}
\newcommand{\Gm}{{\mathbb{G}_m}}
\newcommand{\LG}{{{}^{L}G}}
\newcommand{\CG}{{{}^{C}G}}
\DeclareSymbolFont{cyrletters}{OT2}{wncyr}{m}{n}
\DeclareMathSymbol{\Sha}{\mathalpha}{cyrletters}{"58}
\DeclareMathOperator{\alg}{alg}
\DeclareMathOperator{\cts}{cts}
\newtheorem{thm}{Theorem}[subsection] 
\newtheorem{corollary}[thm]{Corollary} 
\newtheorem{lemma}[thm]{Lemma} 
\newtheorem{lem}[thm]{Lemma} 
\newtheorem{prop}[thm]{Proposition} 
\newtheorem{conj}[thm]{Conjecture} \theoremstyle{definition} 
\newtheorem{defn}[thm]{Definition} \theoremstyle{remark} 
\newtheorem{rem}[thm]{Remark} \numberwithin{equation}{subsection}
\begin{document} 
\title[Automorphic Galois Representations]{The conjectural connections between automorphic representations and Galois representations} 
\author{Kevin Buzzard} \email{buzzard@ic.ac.uk}\address{Department of Mathematics, Imperial College London}

\author{Toby Gee} \email{toby.gee@imperial.ac.uk} \address{Department of Mathematics, Imperial College London}

\subjclass[2000]{11F33.}
\begin{abstract}
We state conjectures on the relationships between automorphic representations and Galois representations, and give evidence for them.
\end{abstract}
\maketitle 
\tableofcontents 
\section{Introduction.}
\subsection{}
Given an algebraic Hecke character for a number field~$F$, a classical
construction of Weil produces a compatible system of 1-dimensional
$\ell$-adic representations of $\Gal(\overline{F}/F)$.
In the late 1950s, Taniyama's work~\cite{MR0095161}
on $L$-functions of abelian varieties with complex multiplications
led him to consider certain higher-dimensional compatible systems of
Galois representations, and by the 1960s it was realised by Serre and others
that Weil's construction might well be the tip of a very large iceberg.
Serre conjectured the existence of 2-dimensional $\ell$-adic
representations of $\Gal(\Qbar/\Q)$
attached to classical modular eigenforms for the group $\GL_2$ over $\Q$,
and their existence was established by Deligne not long afterwards. Moreover,
Langlands observed that one way to attack Artin's conjecture on the
analytic continuation of Artin $L$-functions might be via first proving
that any non-trivial $n$-dimensional irreducible complex representation of the
absolute Galois group of a number field~$F$ came (in some precise sense)
from an automorphic representation for $\GL_n/F$, and then analytically
continuing the $L$-function of this automorphic representation instead.

One might ask whether one can associate ``Galois representations''
to automorphic representations for an arbitrary connected
reductive group over a number field. There are several approaches to
formalising
this problem. Firstly one could insist on working with all automorphic
representations and attempt to associate to them complex representations
of a ``Langlands group'', a group whose existence is only conjectural
but which, if it exists, should be much bigger than the absolute Galois group
of the number field (and even much bigger than the Weil group of the
number field)---a nice reference for a rigorous formulation of a conjecture
here is~\cite{arthur:note}. Alternatively one could restrict to automorphic
representations that are ``algebraic'' in some reasonable sense,
and in this case one might attempt to associate certain complex
representations of the fundamental group of some Tannakian category of motives,
a group which might either
be a pro-algebraic group scheme or a topological group. Finally,
following the original examples of Weil
and Deligne, one might again restrict to algebraic automorphic
representations, and then attempt to associate compatible systems
of $\ell$-adic Galois representations to such objects (that is,
representations of the absolute Galois
group of the number field over which the group is defined). The advantage
of the latter approach is that it is surely the most concrete.

For the group $\GL_n$ over a number field, Clozel gave a definition
of what it meant for an automorphic representation to be ``algebraic''.
The definition was, perhaps surprisingly, a non-trivial twist of a notion
which presumably had been in the air for many years. Clozel made some conjectures
predicting that algebraic automorphic representations should give
rise to $n$-dimensional $\ell$-adic Galois representations (so his
conjecture encapsulates Weil's result on Hecke characters and Deligne's
theorem too). Clozel proved some cases of his conjecture, when he
could switch to a unitary group and use algebraic geometry to construct
the representations.

The goal of this paper is to generalise (most of) the \emph{statement}
of Clozel's conjecture to
the case where $\GL_n$ is replaced by an 
arbitrary connected reductive group $G$. Let us explain the first
stumbling block in this programme. The naive conjecture would
be of the following form: if an automorphic representation
$\pi$ for $G$ is algebraic (in some reasonable sense)
then there should be a Galois representation into the $\Qpbar$-points
of the $L$-group of $G$, associated to $\pi$. But if one looks,
for example, at Proposition~3.4.4 of \cite{cht}, one sees
that they can associate $p$-adic Galois representations
to certain automorphic representations on certain compact unitary groups,
but that the Galois representations are taking values in
a group ${\mathcal G}_n$ which one can check is \emph{not} the
$L$-group of the unitary group in question (for dimension
reasons, for example). In fact there are even easier examples
of this phenomenon: if $\pi$ is the automorphic representation
for $\GL_2/\Q$ attached to an elliptic curve over the rationals,
then (if one uses the standard normalisation for $\pi$) one
sees that $\pi$ has trivial central character and hence descends
to an automorphic representation for $\PGL_2/\Q$ which one would
surely hope to be algebraic (because it is cohomological). However,
the $L$-group of $\PGL_2/\Q$ is $\SL_2$ and there is no way of
twisting the Galois representation afforded by the $p$-adic Tate module
of the curve so that it lands into $\SL_2(\Qlbar)$, because the
cyclotomic character has no square root (consider complex conjugation).
On the other hand, there do exist automorphic representation for $\PGL_2/\Q$
which have associated Galois representations into $\SL_2(\Qlbar)$;
for example one can easily build them from automorphic
representations on $\GL_2/\Q$ constructed via the Langlands-Tunnell theorem
applied to a continuous even irreducible 2-dimensional representation of
$\Gal(\Qbar/\Q)$ into $\SL_2(\C)$ with solvable image. What is going on?

Our proposed solution is the following. For a general connected
reductive group $G$, we believe that there are \emph{two} reasonable notions
of ``algebraic''. For $\GL_n$ these notions differ by a twist (and
this explains why this twist appears in Clozel's work). For some groups
the notions coincide. But for some others---for example $\PGL_2$---
the notions are disjoint. The two definitions ``differ by half the
sum of the positive roots''. We call the two notions $C$-algebraic
and $L$-algebraic. It turns out that cohomological automorphic representations
are $C$-algebraic (hence the $C$), and that given an $L$-algebraic
automorphic representation one might expect an associated Galois
representation into the $L$-group (hence the $L$).
Clozel twists $C$-algebraic representations into $L$-algebraic ones
in his paper, and hence conjectures that there should be Galois
representations attached to $C$-algebraic representations for $\GL_n$,
but this trick is not possible in general. In this paper we explicitly
conjecture the existence of $p$-adic Galois representations
associated to $L$-algebraic automorphic representations for a general
connected reductive group over a number field.

On the other hand, one must not leave $C$-algebraic representations
behind. For example, for certain unitary groups of even rank over the
rationals, all automorphic representations are $C$-algebraic and none
are $L$-algebraic at all! It would be a shame to have no conjecture at
all in these cases. We show in section~\ref{section:twisting} that
given a $C$-algebraic automorphic representation for a group $G$, it
can be lifted to a $C$-algebraic representation for a certain covering
group $\widetilde{G}$ (a canonical central extension of $G$ by $GL_1$) where
there is enough space to twist $C$-algebraic representations into
$L$-algebraic ones. After making such a twisting one would then
conjecturally expect an associated Galois representation into the
$L$-group not of $G$ but of $\widetilde{G}$. We define the $C$-group
${}^CG$ of $G$ to be ${}^L\widetilde{G}$. For example, if $\pi$ is the
automorphic representation for the group $\PGL_2/\Q$ attached to an
elliptic curve over $\Q$, we can verify that $\pi$ is $C$-algebraic,
and that $\widetilde{G}=\GL_2/\Q$ in this case, and hence one would expect
a Galois representation into $\GL_2(\Qlbar)$ associated to $\pi$,
which is given by the Tate module of the curve.  We also verify the
compatibility of the construction with that made by
Clozel--Harris--Taylor in the unitary group case.

In this paper, we explain the phenomena above in more detail.
In particular we formulate a conjecture associating $p$-adic Galois
representations to $L$-algebraic automorphic representations for an arbitrary
connected reductive group over a number field, which appears to
essentially include all known theorems
and conjectures of this form currently in the literature.
We initially imagined that such a conjecture was already
``known to the experts''.
However, our experience has been that this is not the case; in fact,
it seems that the issues that arise when comparing the definitions of
$L$-algebraic and $C$-algebraic representations were a known problem,
with no clear solution (earlier attempts to deal with this issue have
been by means of redefining the local Langlands correspondence and
the Satake isomorphism via a
twist, as in \cite{MR1729443}; however this trick only works for
certain groups). In one interesting example in the literature
where Galois representations are attached to certain cohomological
automorphic representations---the constructions of~\cite{cht}---the
Galois representations take values in a group $\mathcal{G}_n$,
whose construction seemed to us to be one whose main motivation
was that it was the group
that worked, rather than the group that came from a more conceptual
argument. We revisit this construction in~\S\ref{section:examples}.
Ultimately, we hope that this article will clarify once and for
all a variety of issues that occur when
leaving the relative safety of $\GL_n$, giving a firm framework for
further research on Galois representations into groups other than
$\GL_n$.

\subsection{Acknowledgements.}We would like to thank Jeff Adams, James Arthur,
Frank Calegari, Brian Conrad, Matthew Emerton, Wee Teck Gan, Dick Gross, Florian
Herzig, Robert Langlands, David Loeffler, Ambrus P\'al,
Richard Taylor and David Vogan for helpful discussions relating to this work. 
Particular thanks go to Gross, for giving us a copy of Deligne's 2007
letter to Serre and urging us to read it, and to
Adams and Vogan for dealing with several questions
of ours involving local Langlands at infinity which were apparently
``known to the experts'' but which we could not extract from the
literature ourselves.

The first author was supported by an EPSRC Advanced Research
Fellowship, and the second author would like to acknowledge the
support of the National Science Foundation (award number
DMS-0841491). He would also like to thank the mathematics department
of Northwestern University for its hospitality in the early stages of
this project.

\section{$L$-groups and local definitions.}\label{sec:review and notation}

In this section we give an overview of various standard facts concerning
$L$-groups, the Satake isomorphism, the archimedean local Langlands
correspondence, and basic Hodge--Tate theory,
often with a specific emphasis on certain arithmetic aspects that are not
considered relevant in many of the standard references. In summary: our
$L$-groups will be over $\Qbar$, we will keep track of the two different
$\Q$-structures in the Satake isomorphism, and our local Langlands
correspondence will concern representations of $G(\R)$
or $G(\overline{\R})$, where
$\overline{\R}$ is an algebraic closure of the reals which we do not
identify with $\C$ (note that on the other hand, all our
representations will be on $\C$-vector spaces).
This section is relatively elementary but
contains all of the crucial local definitions.

\subsection{The $L$-group.}

We briefly review the notion of an $L$-group. 
We want to view the $L$-group of a connected reductive group as a
group over $\Qbar$, rather than the more traditional $\C$, as we
shall later on be considering representations into the $\Qpbar$-points
of the $L$-group. We review the standard definitions from the
point of view that we shall be taking.

We take the approach to dual groups explained in section~1
of~\cite{MR757954}, but work over $\Qbar$. See also section~3.3
of Expos\'e XXIV of~\cite{MR0207710}, which is perhaps where the
trick of taking limits of based root data is first introduced.

Let $k$ be a field and let $G$ be a connected reductive algebraic group
over $k$. Fix once and for all a separable closure $k^{\sep}$ of $k$,
and let $\Gamma_k$ denote $\Gal(k^{\sep}/k)$.
The group~$G$ splits over $k^{\sep}$,
and if we choose a maximal torus~$T$ in~$G_{k^{\sep}}$
and a Borel subgroup~$B$ of $G_{k^{\sep}}$ containing~$T$, one can
associate the based root datum
$\Psi_0(G,B,T):=(X^*(T),\Delta^*(B),X_*(T),\Delta_*(B))$
consisting of the character and cocharacter groups of $T$, and
the roots and coroots which are simple and positive with respect to the ordering
defined by $B$. Now let $Z_G$ denote the centre of~$G$; then $(G/Z_G)(k^{\sep})$
acts on $G_{k^{\sep}}$ by conjugation, and if $T'$ and $B'$ are another choice of
maximal torus and Borel then there is an element of $(G/Z_G)(k^{\sep})$
sending $B'$ to $B$ and $T'$ to $T$, and all such elements
induce the same isomorphisms of based root data
$\Psi_0(G,B,T)\to\Psi_0(G,B',T')$.
Following Kottwitz, we
define $\Psi_0(G):=(X^*,\Delta^*,X_*,\Delta_*)$ to be the projective
limit of the $\Psi_0(G,B,T)$ via these isomorphisms. This means
in practice that given a maximal torus $T$ of $G_{k^{\sep}}$, the group $X^*$
is isomorphic to the character group of $T$ but not canonically;
however given also a Borel $B$ containing the torus, there is now
a canonical map $X^*=X^*(T)$ (and different Borels give different
canonical isomorphisms). There is a natural
group homomorphism $\mu_G:\Gamma_k\to\Aut(\Psi_0(G))$ (defined
for example in~\S1.3 of~\cite{MR546608}, or more conceptually by
transport de structure) and
if $K\subseteq k^{\sep}$ is a Galois extension of $k$ that splits $G$
then $\mu_G$ factors through $\Gal(K/k)$. 

We let $\Gdual$ denote a connected reductive group over $\Qbar$ equipped with
a given isomorphism $\Psi_0(\Gdual)=\Psi_0(G)^\vee$, the dual
root datum to $\Psi_0(G)$. There is a canonical group isomorphism
$\Aut(\Psi_0(G))=\Aut(\Psi_0(G)^\vee)$, sending an automorphism of $X^*$
to its inverse (one needs to insert this inverse to 'cancel out'
the contravariance coming from the dual construction),
and hence a canonical action of $\Gamma_k$
on $\Psi_0(G)^\vee$. If we choose a Borel, a maximal torus, and a splitting
(also called a pinning; see p.10 of~\cite{MR546587} for details and
definitions) of $\Gdual$ then, as on p.10 of~\cite{MR546587}, this
data induces a lifting $\Aut(\Psi_0(G)^\vee)\to\Aut(\Gdual)$ and hence
(via $\mu_G)$ a left action of $\Gamma_k$ on $\Gdual$. 
We define the $L$-group $\LG$ of $G$ to be the resulting semidirect product,
regarded as a group scheme over $\Qbar$ with identity component
$\Gdual$ and component group $\Gamma_k$. For $K$ a field
containing $\Qbar$ we have $\LG(K)=\Gdual(K)\rtimes\Gamma_k$.
Often in the literature people use $\LG$ to be the group that we
call $\LG(\C)$. 

Note that there is a fair amount of ``ambiguity'' in this definition.
The group $\Gdual$ is ``only defined up to inner automorphisms'',
as is the lifting of $\mu_G$. So, even if we fix our choice of $k^{\sep}$,
points in $\LG(K)$ are ``only defined up to conjugation by $\Gdual(K)$''.

Now say~$F$ is a number field and $k=F_v$ is a completion of $F$ at a
place~$v$. If $G/F$ is a connected reductive group and $G_v/F_v$
is its base extension, then we would like to be able to relate
the $L$-groups of~$G$ and $G_v$ (after choosing algebraic closures
$\Fbar$ and $\Fvbar$ of $F$ and $F_v$). To do this it we
choose a map $\Fbar\to\Fvbar$; doing this gives us a way of pulling
back our global choices of Borel and torus to the local group, and
hence gives us a way of identifying $\Psi_0(G)$ and $\Psi_0(G_v)$
and of embedding the absolute Galois group of $F_v$ into that of $F$
in a manner compatible with the actions of these groups on the based
root data. We thus get a natural map of algebraic groups
from $LG_v$ to $LG$.

If $K$ is an extension of $\Qbar$ and $\rho$ is a group homomorphism
$\Gamma_k\to\LG(K)$, then we say that $\rho$ is \emph{admissible}
if the map $\Gamma_k\to\Gamma_k$ induced by $\rho$ and
the surjection $\LG(K)\to\Gamma_k$ is the identity. We say
two admissible $\rho$s are \emph{equivalent} if they differ
by conjugation by an element of $\Gdual(K)$.

We remark that readers for whom even the choice of $k^{\sep}$ is
distasteful can avoid making this choice all together by interpreting
$\Gdual(K)$ as a scheme of groups over $\Spec(k)$ and then
interpreting equivalence classes of admissible $\rho$s as above as
elements of $H^1(\Spec(k),\Gdual(K))$.

We fix once and for all an embedding $\Qbar\to\C$. Later on, when
talking about Galois representations, we shall fix a prime number~$p$
and an embedding $\Qbar\to\Qpbar$. This will enable us to talk about
the groups $\LG(\C)$ and $\LG(\Qpbar)$.

\subsection{Satake parameters.}\label{satakeparams}

In this section, $k$ is a non-archimedean local field with
integers $\calO$, and we again fix
a separable closure $k^{\sep}$ of $k$ and set
$\Gamma_k=\Gal(k^{\sep}/k)$.
We normalise the reciprocity map $k^\times\to\Gamma_k^{\ab}$
of local class field theory so that it takes
a uniformiser to a geometric Frobenius.
We follow Tate's definitions and conventions for Weil groups---in brief,
a Weil group $W_k=W_{k^{\sep}/k}$ for $k$ comes equipped with maps
$W_k\to\Gamma_k$
and $k^\times\to W_k^{\ab}$ such that the induced map
$k^\times\to\Gamma_k^{\ab}$ is the reciprocity homomorphism of class field
theory, normalised as above.

Let $G/k$ be connected reductive group which is furthermore unramified
(that is, quasi-split, and split over an unramified extension of $k$).
Then $G(k)$ has hyperspecial maximal compact subgroups (namely
${\mathcal G}(\calO)\subseteq G(k)$, where ${\mathcal G}$
is any reductive group over $\calO$ with generic fibre~$G$); fix one, and
call it~$K$. Nothing we do will depend on this choice, but we will
occasionally need to justify this. Let $B=B_k$ be a Borel in $G$ defined
over $k$, let $T=T_k$ be a maximal torus of $B$, also defined over~$k$,
and let $T_d$ be the maximal $k$-split sub-$k$-torus of $T$.
Let $W_d$ be the subgroup of the Weyl
group of $G$ consisting of elements which map $T_d$ to itself.
Let ${}^oT$ denote the maximal compact subgroup of $T(k)$.
It follows from an easy cohomological calculation
(done for example in \S9.5 of~\cite{MR546608}) that
the inclusion $T_d\to T$ induces an isomorphism of groups
$T_d(k)/T_d(\calO)\to T(k)/{}^oT$. We normalise Haar measure on $G(k)$
so that $K$ has measure~1 (and remark that by 3.8.2 of \cite{MR546588} this normalisation
is independent of the choice of hyperspecial maximal compact~$K$).
If $R$ is a field of characteristic
zero then let $H_R(G(k),K)$ denote the Hecke algebra of bi-$K$-invariant
$R$-valued functions on $G(k)$ with compact support, and with multiplication
given by convolution. Similarly let $H_R(T(k),{}^oT)$ denote the analogous
Hecke algebra for $T(k)$ (with Haar measure normalised so that
${}^oT$ has measure~1).

The Satake isomorphism (see for example \S4.2 of~\cite{cartier:corvallis})
is a canonical isomorphism
$H_{\C}(G(k),K)=\C[X_*(T_d)]^{W_d}=H_{\C}(T_d(k),T_d(\calO))^{W_d}=H_{\C}(T(k),{}^oT)^{W_d}$,
where $X_*(T_d)$ is the cocharacter
group of $T_d$ (the Satake isomorphism is the first of these equalities;
the others are easy). We normalise the Satake isomorphism in the usual way,
so that it does not depend on the choice of the Borel subgroup containing~$T$;
this is the only canonical way to do things. This standard normalisation
is however not in general ``defined over $\Q$''---for example if $k=\Q_p$
and $G=\GL_2$ and $K=\GL_2(\Z_p)$ then the Satake isomorphism sends
the characteristic function of $K\smallmat{p}{0}{0}{1}K$ to a function
on $T(\Q_p)$ taking the value $\sqrt{p}$ on the matrix $\smallmat{p}{0}{0}{1}$.
This square root of $p$ appears because the definition of
the Satake isomorphism involves a twist by half the sum of the positive
roots of $G$ (see formula~(19) of section 4.2 of~\cite{cartier:corvallis})
and because of this twist, the isomorphism does \emph{not}
in general induce a canonical isomorphism
$H_{\Q}(G(k),K)=\Q[X_*(T_d)]^{W_d}$.

In~\cite{MR1729443} and \cite{MR1044819} this issue of square
roots is avoided by renormalising the Satake isomorphism. 
Let us stress that we shall \emph{not} do this here, and we shall think of
$H_{\Q}(G(k),K)$ and $\Q[X_*(T_d)]^{W_d}$ as giving two possibly
distinct $\Q$-structures
on the complex algebraic variety $\Spec(H_{\C}(G(k),K))$ which shall
perform two different functions---they will give
us two (typically distinct) notions of being defined over a subfield of $\C$.
We note however that if half the sum of the positive roots of $G$ is
in the weight lattice $X^*(T)$ (this occurs for example if $G$
is semi-simple and simply connected, or
a torus) then the map $\delta^{1/2}:T(k)\to\R_{>0}$ mentioned in formula~(19)
of~\cite{cartier:corvallis} is $\Q$-valued (see formula~(4)
of~\cite{cartier:corvallis} for the definition of~$\delta$)
and the proof of Theorem~4.1 of~\cite{cartier:corvallis}
makes it clear that in this case
the Satake isomorphism does induce
an isomorphism $H_{\Q}(G(k),K)=\Q[X_*(T_d)]^{W_d}$.

Next we recall how the Satake
isomorphism above leads us to an unramified local Langlands correspondence.
Say $\pi$ is a smooth admissible irreducible complex representation of
$G(k)$, and assume that $\pi^K\not=0$ for our given choice of $K$
(so in particular, $\pi$ is unramified).
Then $\pi^K$ is a 1-dimensional representation of $H_{\C}(G,K)$ and
hence gives rise to a $\C$-valued character of $H_{\C}(G,K)$.
Now results of Gantmacher and Langlands
in section~6 of~\cite{MR546608} enable one to canonically associate
to $\pi$ an unramified continuous
admissible representation $r_{\pi}:W_k\to\LG(\C)$,
where ``admissible'' in this context means
a group homomorphism such that if one composes it with the canonical
map $\LG(\C)\to\Gamma_k$ then one obtains the canonical
map $W_k\to\Gamma_k$ (part of the definition of the Weil group)
and ``unramified'' means that the resulting 1-cocycle
$W_k\to\Gdual(\C)$ is trivial on the inertia subgroup of $W_k$.
In fact there are two ways of normalising the construction:
if we follow section~6 of~\cite{MR546608} then in the crucial
Proposition~6.7 Borel has chosen the $\sigma$ that appears there
to be an \emph{arbitrary} generator
of the Galois group of a finite unramified extension of~$k$ which
splits $G$, and we are free to choose $\sigma$ to be either an
arithmetic or a geometric Frobenius. We shall let $\sigma$ denote
a geometric Frobenius: now an easy check (unravelling the
definitions in~\cite{MR546608} and~\cite{cartier:corvallis})
shows that if $G=\GL_1$
and $\pi$ is an unramified representation of $\GL_1(k)$
then the corresponding Galois
representation $W_k\to\GL_1(\C)$ is the one induced by our given
isomorphism $k^\times=W_k^{\ab}$. See Remark \ref{swapping geometric and arithmetic frobenius via an involution} for some comments about what would have happened
had we chosen an arithmetic Frobenius here, and normalised our class field
theory isomorphisms so that they sent arithmetic Frobenii to uniformisers.

\begin{rem}\label{unramlltwist}
The local Langlands correspondence behaves in a natural
way under certain unramified twists. Let $\xi:G\to\Gm$ be a character of~$G$
defined over~$k$. Then $\xi(K)$ is a compact subset of $k^\times$
and is hence contained in~$\calO^\times$. If $s\in\C$ then
we can define $\chi:G(k)\to\C^\times$ by $\chi(g)=|\xi(g)|^s$; then
$\chi(K)=1$. Hence for an irreducible $\pi$ with a $K$-fixed
vector, $\pi\otimes\chi$ is also irreducible with a $K$-fixed vector.
The map $\chi$ induces an automorphism of $H_\C(G(k),K)$ sending
a function $f$ to the function $g\mapsto \chi(g)f(g)$, and
similarly it induces automorphisms of $H_\C(T(k)/{}^oT)$ and
$H_\C(T_d(k)/T_d(\calO))$. These latter
two automorphisms are equivariant for the action of $W_d$, because $\chi$ is
constant on $G(k)$-conjugacy classes. Furthermore, the Satake isomorphism
commutes with these automorphisms---this follows without too much trouble
from the formula defining the Satake isomorphism in
\S4.2 of~\cite{cartier:corvallis} and
the observation that the kernel of $\xi$ contains the unipotent radical
of any Borel in~$G$. Now if
$Y=\Spec(H_\C(T_d(k),T_d(\calO))$ is the complex torus dual to $T_d$,
then given an irreducible $\pi$ with a $K$-fixed vector the Satake isomorphism
produces a $W_d$-orbit of elements of~$Y$. Twisting $\pi$ by $\chi$
amounts to changing this orbit by adding to it the $W_d$-stable
element $y_\chi$ of $Y$ corresponding to the character of $T_d(k)/T_d(\calO)$
induced by $\chi$. Let us define an element $t_\chi$ of
$\widehat{T}=X^*(T)\otimes\C^\times$, the complex torus dual to~$T$,
by $t_\chi=\xi\otimes q^{-s}=\xi\otimes|\varpi|^s$ (with $q$ the size of
the residue field of $k$ and $\varpi$ a uniformiser).
The inclusion $T_d\to T$ induces
a surjection $\widehat{T}\to Y$ and some elementary unravelling shows
that it sends $t_\chi$ to $y_\chi$. It follows that if $r_\pi:W_k\to{}^LG$
is the representation of $W_k$ corresponding to~$\pi$, then
$r_{\pi\otimes\chi}(w)=\hat\xi(|w|^s)r_\pi(w)$, with
$\hat\xi:\C^\times\to\widehat{T}\subseteq\widehat{G}$ the cocharacter
of $\widehat{T}$ corresponding to the character $\xi$ of~$T$.
\end{rem}

We now come back to the $\mathbf{Q}$-structures on $H_{\C}(G(k),K)$.
As we have already mentioned, we have a natural $\Q$-structure
on $H_{\C}(G(k),K)$ coming from the $\Q$-valued functions $H_{\Q}(G(k),K)$,
and we have another one coming from $\Q[X_*(T_d)]^{W_d}$ via
the Satake isomorphism. This means that, for
a smooth irreducible admissible
representation $\pi$ of $G(k)$ with a $K$-fixed vector,
there are two (typically distinct) notions of what it means
to be ``defined over $E$'', for
$E$ a subfield of~$\C$. Indeed, if $\pi$ is a smooth admissible
irreducible representation of $G(k)$ with a $K$-fixed vector,
then $\pi^K$ is a 1-dimensional complex vector space on which
$H_{\C}(G(k),K)$ acts, and this action induces maps
$$H_{\Q}(G(k),K)\to\C$$ and $$\Q[X_*(T_d)]^{W_d}\to\C.$$

\begin{defn}\label{definedoverEdef}
Let $\pi$ be smooth, irreducible and admissible,
with a $K$-fixed vector. Let $E$ be a subfield of $\C$.

(i) We say that \emph{$\pi$ is defined over $E$} if the induced
map $H_{\Q}(G(k),K)\to\C$ has image lying in $E$.

(ii) We say that \emph{the Satake parameter of $\pi$ is defined over $E$}
if the induced map $\Q[X_*(T_d)]^{W_d}\to\C$ has image lying in~$E$.
\end{defn}

If half the sum of the positive roots of $G$
is in the lattice $X^*(T)$, then these notions coincide. However
there is no reason for them to coincide in general and we shall shortly
see examples for $\GL_2(\Q_p)$ where they do not.

Note also that it is not immediately clear that these notions are independent
of the choice of $K$: perhaps there is some $\pi$
with a $K$-fixed vector and a $K'$-fixed vector for two non-conjugate
hyperspecial maximal compacts (for example, the trivial 1-dimensional
representation of $\SL_2(\Q_p)$ has this property), and which is defined
over $E$ (or has Satake parameter defined over $E$) for one
choice but not for the other. The reader should bear in mind that for the time being
these notions depend on the choice of~$K$, although we will soon
see (in Corollary~\ref{Carith_well_defined} and Lemma~\ref{Larith_well_defined})
that they are in fact independent of this choice.

We now discuss some other natural notions of being ``defined over $E$''
for $E$ a subfield of $\C$, and relate them to the notions above.
So let $E$ be a subfield of $\C$ and let $\pi$ be a smooth
irreducible admissible complex representation of $G(k)$ with a $K$-fixed
vector, for our fixed choice of~$K$.
Let $V$ be the underlying vector space for $\pi$.

\begin{lemma}\label{definedoverE} The following are equivalent:

(i) The representation $\pi$ is defined over~$E$.

(ii) There is an $E$-subspace $V_0$ of $V$ which is $G(k)$-stable and such
that $V_0\otimes_E\C=V$.

(iii) For any (possibly discontinuous) field automorphism
$\sigma$ of $\C$ which fixes $E$ pointwise, we have
$\pi\cong\pi^{\sigma}=\pi\otimes_{\C,\sigma}\C$ as $\C$-representations.
\end{lemma}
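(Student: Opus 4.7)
The plan is to establish (ii) $\Rightarrow$ (iii), (ii) $\Rightarrow$ (i), and (i) $\Rightarrow$ (iii) directly, and then to close the loop with (iii) $\Rightarrow$ (ii) by Galois descent; this last step is the serious one.

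For (ii) $\Rightarrow$ (iii), given an $E$-form $V_0 \subseteq V$ the map $v \otimes c \mapsto v \otimes \sigma(c)$ on $V = V_0 \otimes_E \C$ is a $G(k)$-equivariant $\sigma$-semilinear automorphism of $V$, equivalently a $\C$-linear isomorphism $\pi \isoto \pi^\sigma$. For (ii) $\Rightarrow$ (i), the one-dimensional $E$-subspace $V_0 \cap \pi^K$ is stable under $H_{\Q}(G(k),K)$, because the characteristic function of a double coset $KgK$ acts as a finite $\Q$-linear combination of the $G(k)$-translates $\pi(g_i)$ and these preserve $V_0$; hence the Hecke eigenvalues lie in $E$. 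For (i) $\Rightarrow$ (iii), one invokes the unramified local Langlands correspondence reviewed above: $\pi$ is determined up to isomorphism by the character $\chi_\pi: H_{\C}(G(k),K) \to \C$ giving the action on $\pi^K$, and $\pi^\sigma$ corresponds to $\sigma \circ \chi_\pi$. If $\chi_\pi|_{H_\Q(G(k),K)}$ takes values in $E$ then $\sigma \circ \chi_\pi = \chi_\pi$ on $H_\Q(G(k),K)$, and hence on all of $H_{\C}(G(k),K) = H_{\Q}(G(k),K) \otimes_\Q \C$ by $\C$-linearity, so $\pi \cong \pi^\sigma$.

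For (iii) $\Rightarrow$ (ii), the Galois descent goes as follows. For each $\sigma \in \Aut(\C/E)$ choose an isomorphism $\phi_\sigma: \pi \isoto \pi^\sigma$, viewed as a $G(k)$-equivariant $\sigma$-semilinear automorphism of the underlying space $V$, and normalize it by demanding $\phi_\sigma(v_0) = v_0$ for a fixed generator $v_0$ of the one-dimensional space $\pi^K$. Schur's lemma makes this normalization possible and uniquely determines $\phi_\sigma$. The same uniqueness forces the strict cocycle identity $\phi_{\sigma\tau} = \phi_\sigma \circ \phi_\tau$, since both sides are $G(k)$-equivariant $\sigma\tau$-semilinear automorphisms of $V$ sending $v_0$ to $v_0$. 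Standard Galois descent for semilinear actions then produces the desired form $V_0 := V^{\Aut(\C/E)}$.

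The main obstacle is precisely this last step, for two related reasons: first, one must avoid merely a projective cocycle (which would obstruct passage to an honest fixed subspace), and second, one must ensure the descended object is genuinely an $E$-form and not only a form over the algebraic closure of $E$ in $\C$. Both difficulties are resolved by the distinguished one-dimensional line $\pi^K$: it rigidifies the isomorphisms $\phi_\sigma$ via Schur's lemma, and, in combination with the already-established (i) $\Rightarrow$ (iii), guarantees that the Hecke-theoretic data truly descends to $E$.
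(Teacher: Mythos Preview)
Your argument is correct, but the route differs from the paper's. The paper proves (ii)~$\Rightarrow$~(iii) and (iii)~$\Rightarrow$~(ii) first (the latter by citing Lemma~I.1 of Waldspurger, which is essentially the descent you carry out by hand), then (ii)~$\Rightarrow$~(i) as you do, and finally (i)~$\Rightarrow$~(ii) \emph{directly} via the explicit spherical-function model in Cartier's Corvallis article: the $E$-valued Hecke character yields an $E$-valued spherical function $\Gamma$, and the $E$-span of its right translates furnishes the $E$-form. You instead route (i) to (ii) through (iii), invoking the classification of $K$-spherical irreducibles by Hecke characters.

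Your approach is more self-contained (no spherical-function machinery, no external citation for descent), while the paper's is more constructive, producing an explicit $E$-model. One small caveat: your appeal to ``standard Galois descent'' for the extension $\C/E$ is a bit glib, since this is not a finite or profinite Galois extension. What actually makes the descent go through is exactly what you identify in the final paragraph---the rigidified $\phi_\sigma$ fix $v_0$, hence fix every $\pi(g)v_0$ by equivariance, and these span $V$ over $\C$ by irreducibility; combined with $\C^{\Aut(\C/E)}=E$, this forces $V_0\otimes_E\C\to V$ to be an isomorphism. It would strengthen the write-up to make that step explicit rather than folding it into the phrase ``standard Galois descent.''
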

\begin{proof} That (ii) implies (iii) is clear---it's an abstract
representation-theoretic fact. Conversely if (iii) holds, then
(ii) follows from Lemma~I.1 of~\cite{Waldspurger:MR783510} (note: his~$E$
is not our~$E$), because
$V^K$ is 1-dimensional. This latter lemma of Waldspurger
also shows that if (ii) holds then $V_0^K$ is 1-dimensional over~$E$,
and hence (ii) implies~(i). To show that (i) implies (ii) we look
at the explicit construction giving $\pi$ from the algebra
homomorphism $H_{\Q}(G(k),K)\to\C$ given in~\cite{cartier:corvallis}.
Given a homomorphism  $H_{\Q}(G(k),K)\to\C$ with image landing in~$E$,
the resulting spherical function $\Gamma:G(k)\to\C$ defined
in equation~(30) of~\cite{cartier:corvallis} is also $E$-valued.
Now if we define $V_0$ to be the $E$-valued functions on $G(k)$
of the form $f(g)=\sum_{i=1}^nc_i\Gamma(gg_i)$ for $c_i\in E$
and $g_i\in G(k)$, then $G(k)$ acts on $V_0$ by right translations,
$V_0\otimes_E\C$ is the $V_\Gamma$ of \S4.4 of~\cite{cartier:corvallis},
and the arguments in~\S4.4 of \cite{cartier:corvallis} show that $\pi\cong V_0\otimes_E\C$.
\end{proof}

\begin{corollary}\label{Carith_well_defined} If $\pi$ is a smooth irreducible admissible
unramified representation of $G(k)$, then the notion of being ``defined
over $E$'' is independent of the choice of hyperspecial maximal
compact $K$ for which $\pi^K$ is non-zero.
\end{corollary}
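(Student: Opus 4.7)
The corollary should follow immediately from Lemma~\ref{definedoverE}. The plan is to observe that conditions~(ii) and~(iii) of that lemma are phrased purely in terms of the abstract $\C$-representation~$\pi$ of $G(k)$, with no reference whatsoever to any choice of hyperspecial maximal compact subgroup. Condition~(ii) mentions only the existence of a $G(k)$-stable $E$-form $V_0\subseteq V$, and condition~(iii) mentions only the isomorphism class of $\pi$ under $\sigma$-twisting; neither notion sees $K$.

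Concretely, suppose $K$ and $K'$ are two hyperspecial maximal compacts of $G(k)$ with both $\pi^K$ and $\pi^{K'}$ nonzero. By Lemma~\ref{definedoverE} applied to $K$, the statement ``$\pi$ is defined over $E$ with respect to $K$'' is equivalent to condition~(ii) (equivalently~(iii)). By Lemma~\ref{definedoverE} applied to $K'$, the statement ``$\pi$ is defined over $E$ with respect to $K'$'' is equivalent to the \emph{same} condition~(ii) (equivalently~(iii)). Since these intrinsic conditions do not depend on the choice of hyperspecial maximal compact, the two notions of being defined over $E$ coincide.

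There is essentially no obstacle here; the work has already been done in proving Lemma~\ref{definedoverE}. The only minor point to be careful about is that the proof of Lemma~\ref{definedoverE} used the particular $K$ when invoking Waldspurger's lemma (to deduce $\dim_E V_0^K = 1$ from the existence of an $E$-form), but since that lemma's output is the $K$-independent statement~(ii), applying the lemma with either $K$ or $K'$ gives the same intrinsic condition, and this is all that is needed.
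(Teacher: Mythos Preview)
Your proposal is correct and takes essentially the same approach as the paper. The paper's proof is a single sentence---``This is because condition (iii) of Lemma~\ref{definedoverE} is independent of this choice''---and your argument is simply a more detailed unpacking of exactly this observation.
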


\begin{proof} This is because condition (iii) of Lemma~\ref{definedoverE}
is independent of this choice.
\end{proof}
We now prove the analogous result for Satake parameters.
\begin{lemma}\label{Larith_well_defined} If $\pi$ is a smooth irreducible admissible
unramified representation of $G(k)$, then the notion
of $\pi$ having Satake parameter being defined over $E$
is independent of the choice of hyperspecial maximal compact~$K$
for which $\pi^K\not=0$.
\end{lemma}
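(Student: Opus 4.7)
The plan is to show that the Satake parameter of $\pi$ is intrinsically determined by $\pi$ (not by the choice of $K$), and then observe that ``defined over $E$'' is a property of this intrinsic invariant. Unlike the previous corollary, there is no Galois-theoretic criterion of the form ``$\pi\cong\pi^\sigma$'' available, because the $\Q$-structure $\Q[X_*(T_d)]^{W_d}$ is not the one corresponding to Galois action on realizations of $\pi$. Instead, I would argue the claim by identifying the Satake parameter with a canonical $W_d$-orbit in $\widehat{T_d}(\C)$.

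First, I would unwind the Satake isomorphism on $\pi^K$: it produces a $\C$-algebra homomorphism $\C[X_*(T_d)]^{W_d}\to\C$, i.e., a $W_d$-orbit of points in $\widehat{T_d}(\C)=\Hom(X_*(T_d),\C^\times)$, or equivalently a $W_d$-orbit of unramified characters of $T_d(k)/T_d(\calO)=T(k)/{}^oT$. Then I would invoke the standard characterization of unramified representations via principal series: any smooth irreducible admissible unramified $\pi$ occurs as the unique irreducible subquotient admitting a nonzero $K$-fixed vector of some induced representation $\Ind_{B(k)}^{G(k)}(\psi)$ where $\psi$ is an unramified character of $T(k)/{}^oT$, and the $W$-orbit of $\psi$ depends only on~$\pi$ (and the ambient choices of $B,T$, neither of which depends on $K$). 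Tracing through the formula for Satake in~\S4.2 of~\cite{cartier:corvallis}, the $W_d$-orbit in $\widehat{T_d}(\C)$ arising from $\pi^K$ equals the image of this $W$-orbit under the restriction map $\widehat{T}\to\widehat{T_d}$, after the canonical $\delta_B^{1/2}$ twist. Since the characterization through principal series does not involve $K$ at all, the orbit is intrinsic to $\pi$.

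Finally, the condition that the Satake parameter of $\pi$ be defined over $E$ is exactly the condition that this canonical orbit, viewed as a $\C$-point of the affine $\Q$-scheme $\Spec(\Q[X_*(T_d)]^{W_d})$, actually lies in the $E$-points. Since the orbit itself is independent of $K$, so is this condition, completing the proof.

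The main obstacle is the first identification: one must verify that the normalization of Satake (the $\delta_B^{1/2}$ factor, the choice of Haar measure giving $K$ measure~$1$, and the identification of $H_\C(T_d(k),T_d(\calO))^{W_d}$ with $\C[X_*(T_d)]^{W_d}$) does not surreptitiously depend on $K$. The Haar measure point is handled by the cited independence result 3.8.2 of~\cite{MR546588}; the remaining ingredients are defined purely in terms of $T$, $B$, and the group $G$, so all dependence on $K$ cancels once one passes to the $W_d$-invariant quotient.
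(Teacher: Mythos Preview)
Your proposal is correct and follows essentially the same route as the paper: both arguments identify the Satake parameter with the $W_d$-orbit of unramified characters $\psi$ of $T(k)/{}^oT$ for which $\pi$ occurs as a subquotient of the associated principal series, and then observe that this principal-series characterization is manifestly independent of~$K$. The paper makes this explicit by citing p.~45 of~\cite{MR546608} and \S\S3--4 of~\cite{cartier:corvallis} for the forward direction (the Satake orbit gives characters whose principal series contain $\pi$) and Theorem~2.9 of~\cite{MR0579172} for the converse (these are the \emph{only} such characters), whereas you assert the intrinsic nature of the orbit as a ``standard characterization''; you may wish to supply a reference such as the one the paper uses for that uniqueness step.
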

\begin{proof}
Say $\pi$ is an unramified smooth irreducible
admissible representation of $G(k)$ with a $K$-fixed vector.
The Satake isomorphism associated to~$K$ gives us a character of
the algebra $H_{\C}(T(k),{}^oT)^{W_d}$ and hence a $W_d$-orbit
of complex characters of $T(k)$. Now by p45 of~\cite{MR546608}
and sections~3 and~4 of~\cite{cartier:corvallis}, $\pi$ is a subquotient
of the principal series representation attached to any one of
these characters, and Theorem~2.9 of \cite{MR0579172}
then implies that this $W_d$-orbit
of complex characters are the only characters for which $\pi$ occurs
as a subquotient of the corresponding induced representations.
Hence the $W_d$-orbit of characters, and hence
the map  $\Q[X_*(T_d)]^{W_d}\to\C$ attached to $\pi$,
does not depend on the choice of $K$ in the case when $\pi$ has
fixed vectors for more than one conjugacy class of hyperspecial
maximal compact. In particular the image of $\Q[X_*(T_d)]^{W_d}$
in $\C$ is well-defined independent of the choice of~$K$, and hence
the notion of having Satake parameter defined over $E$ is also
independent of the choice of~$K$.
\end{proof}

To clarify the meaning of having a Satake parameter defined over $E$,
we now explain that in the case of $G=\GL_n$ the notion becomes a more
familiar one. If $\pi$ is an unramified representation of $\GL_n(k)$
and we choose our Borel to be the upper triangular matrices and
our torus to be the diagonal matrices, then
the formalism above associates to $\pi$ an algebra homomorphism
$\C[X_*(T_d)]^{W_d}\to\C$. But here $T=T_d$ is the diagonal matrices,
and $W_d$ is the usual Weyl group~$W$ of~$G$.
The ring $\C[X_*(T_d)]=\C[X_*(T)]$
is then just the ring of functions on the dual torus $\Tdual$, and hence
an unramified $\pi$ gives rise to a $W_d$-orbit on $\Tdual$, which can
be interpreted as a semisimple conjugacy class $S_{\pi}$ in $\GL_n(\C)$. 

\begin{lemma} Let $G$ be the group $\GL_n/k$ and let $\pi$ be an unramified
representation of $G(k)$. Let $E$ be a subfield of $\C$. Then the following
are equivalent:

(i) The Satake parameter of $\pi$ is defined over~$E$.

(ii) The conjugacy class $S_{\pi}$ is defined over~$E$.

(iii) The conjugacy class $S_{\pi}$ contains an element of $\GL_n(E)$.
\end{lemma}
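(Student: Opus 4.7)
The plan is to make the Satake ring explicit for $\GL_n$ and then reduce everything to elementary facts about characteristic polynomials. For $G=\GL_n$ the torus $T=T_d$ is the diagonal torus, so $X_*(T)\cong\Z^n$ and $W_d=S_n$ acts by permuting coordinates; thus $\Q[X_*(T_d)]^{W_d}$ is the ring of symmetric Laurent polynomials in $n$ variables, concretely $\Q[e_1,\ldots,e_n,e_n^{-1}]$, with $e_i$ the $i$-th elementary symmetric polynomial. Its spectrum parametrizes semisimple conjugacy classes in $\GL_n$ via the characteristic polynomial. Under this identification the algebra homomorphism $\Q[X_*(T_d)]^{W_d}\to\C$ attached to $\pi$ sends $e_i$ to the $i$-th elementary symmetric function in the eigenvalues of any representative of $S_\pi$, that is, to (up to sign) the coefficients of the characteristic polynomial of $S_\pi$.

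Granted this dictionary, the equivalence (i) $\Leftrightarrow$ (ii) is immediate: the map has image in $E$ precisely when all coefficients of the characteristic polynomial of $S_\pi$ lie in $E$, which is the definition of $S_\pi$ being defined over $E$. The implication (iii) $\Rightarrow$ (ii) is also immediate, since the characteristic polynomial of an element of $\GL_n(E)$ has coefficients in $E$.

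The only substantive step is (ii) $\Rightarrow$ (iii). For this I would factor the characteristic polynomial $p\in E[X]$ of $S_\pi$ as $p=\prod_{i=1}^{r} p_i^{a_i}$ with the $p_i$ distinct monic irreducibles in $E[X]$---each separable, since $E\subseteq\C$ has characteristic zero---and take $g_0\in\GL_n(E)$ to be the block diagonal matrix consisting, for each $i$, of $a_i$ copies of the companion matrix of $p_i$. Then $g_0$ has characteristic polynomial $p$ and minimal polynomial $\prod_i p_i$; separability of the latter ensures that $g_0$ is semisimple, so it lies in the unique semisimple conjugacy class in $\GL_n(\C)$ whose characteristic polynomial is $p$, which is $S_\pi$. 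There is no serious obstacle here; the main thing to watch is the dictionary between the Satake side (symmetric functions in the coordinates on $X_*(T_d)$), the geometric side ($W_d$-orbits of eigenvalues, i.e.\ semisimple conjugacy classes in $\GL_n(\C)$), and the polynomial side (coefficients of the characteristic polynomial).
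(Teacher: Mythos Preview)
Your proposal is correct and follows essentially the same approach as the paper: both identify the Satake parameter being defined over~$E$ with the characteristic polynomial of $S_\pi$ having coefficients in~$E$, and then observe that from such a polynomial one can build a semisimple matrix over~$E$. Your version is simply more explicit on the (ii)~$\Rightarrow$~(iii) step, spelling out the block-diagonal companion-matrix construction that the paper leaves as ``easy to construct.''
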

\begin{proof} The statement that the Satake parameter is defined over $E$
is precisely the statement that the induced map $\Q[X_*(T)]^W\to\C$
takes values in $E$, which is the statement that the characteristic
polynomial of an element of $S_{\pi}$ has coefficients in $E$.
Hence (i) and (ii) are equivalent. Furthermore, (ii) is equivalent
to (iii) because given a monic polynomial with coefficients in~$E$
it is easy to construct a semisimple matrix with this polynomial
as its characteristic polynomial.
\end{proof}

We leave to the reader the following elementary checks. Let $G_1$ and
$G_2$ be unramified connected reductive groups over $k$, and let $\pi_1$,
$\pi_2$ be unramified representations of $G_1(k)$, $G_2(k)$. Then
$\pi:=\pi_1\otimes\pi_2$
is an unramified representation of $(G_1\times G_2)(k)$. One can check
that $\pi$ is defined over $E$ iff $\pi_1$ and $\pi_2$ are defined over $E$,
and that $\pi$ has Satake parameter defined over $E$ iff $\pi_1$ and $\pi_2$
do. Now say $k_1/k$ is a finite unramified
extension of non-archimedean local fields,
and $G/k_1$ is unramified connected reductive,
and set $H=\Res_{k_1/k}(G)$. Then $H$ is unramified over $k_1$, and if $\pi$
is a representation of $G(k_1)=H(k)$ then $\pi$ is unramified as a
representation of $G(k_1)$ if and only if it is unramified as a representation
of $H(k)$. Furthermore, the two notions of being defined over $E$ (one for $G$
and one for $H$) coincide. Moreover, the two notions of having Satake
parameter defined over~$E$---one for~$G$ and one for~$H$---also coincide;
we give the argument for this as it is a little trickier. Let $T_d$ denote
a maximal split torus in $G$ and let $T$ denote its centralizer. The
Satake homomorphism for $G$ is an injective ring homomorphism
from an unramified Hecke algebra for $G$ into $\C[T(k_1)/U]$, with $U$ a
maximal compact subgroup of $T(k_1)$. The Satake homomorphism for $H$
is a map between the same two rings, and it can be easily checked
from the construction in Theorem~4.1
of~\cite{cartier:corvallis} that it is in fact the same map.
The homomorphism for $G$ is an isomorphism onto the subring $\C[T(k_1)/U]^{W(G)}$
of $\C[T(k_1)/U]$, with $W(G)$ the relative Weyl group for the pair $(G,T_d)$.
The homomorphism for $H$ is an isomorphism onto
$\C[T(k_1)/U]^{W(H)}$,
and hence 
$\C[T(k_1)/U]^{W(G)}=\C[T(k_1)/U]^{W(H)}$. Now intersecting
with $\Q[T(k_1)/U]$ we deduce that $\Q[T(k_1)/U]^{W(G)}=\Q[T(k_1)/U]^{W(H)}$
and hence the two $\Q$-structures---one coming from~$G$ and one
from~$H$---coincide.

We finish this section by noting that the notion of being defined over $E$ 
does not coincide with the notion of having Satake parameter
defined over $E$, if $k=\Q_p$ and $G=\GL_2$.
For example, if $\pi$ is the trivial 1-dimensional
representation of $\GL_2(\Q_p)$ then $\pi$ is defined
over $\Q$ but the Satake parameter attached
to $\pi$ has eigenvalues $\sqrt{p}$ and $1/\sqrt{p}$, so the Satake
parameter is not defined over $\Q$ (consider traces)
but only over $\Q(\sqrt{p})$. Similarly if $\pi$ is
the character $|\det|^{1/2}$ of $\GL_2(\Q_p)$ then $\pi_p$ is not defined
over $\Q$
but the Satake parameter of $\pi$ has characteristic polynomial
$(X-1)(X-p)$ and hence is defined over $\Q$. This issue of the
canonical normalisation of
the Satake isomorphism ``introducing a square root of~$p$'' is essentially
the reason that one sees two normalisations of local Langlands for $\GL_n$
in the literature---one used for local questions and one used for
local-global compatibility. We are not attempting to unify these
two notions---indeed, one of the motivations of this paper
is to draw the distinction between the two notions and explain
what each is good for.

\subsection{Local Langlands at infinity.}\label{locallanglandsatinfinity}

We recall the statements and basic properties of the local Langlands
correspondence for connected reductive groups over the real or
complex field. In fact we work in slightly more generality,
for the following reason: the groups that we will apply the
definitions and results of this section to are groups defined
over completions of number fields
at infinite places. So the following subtlety arises:
the completion of a number field
at a real infinite place is canonically isomorphic to the reals,
however the completion of a number field at a complex
place is isomorphic to the complex numbers, but not canonically.
Hence we actually work with groups defined over
either $\R$ or a degree two extension of $\R$ which will be isomorphic
to $\C$ but may or may not be canonically isomorphic to $\C$. Note however that
all our representations will be on $\C$-vector spaces -- there is
no ambiguity in our coefficient fields.

Let~$k$ be either the real numbers or an algebraic closure of the
real numbers. Let~$G$ be a connected reductive group over~$k$. Fix
an algebraic closure $\kbar$ of $k$ and let
$T\subseteq B$ be a maximal torus and a Borel subgroup of $G_{\kbar}$.
If $\pi$ is an irreducible admissible complex
representation of $G(k)$
then Langlands associates to $\pi$, in a completely canonical
way, a $\widehat{G}(\C)$-conjugacy class of
admissible homomorphisms $r=r_{\pi}$ from
the Weil group $W_k=W_{\overline{k}/k}$
of $k$ to $\LG(\C)$. Let us
fix a maximal torus~$\Tdual$ in $\widehat{G}_{\C}$ equipped
with an identification $X_*(\Tdual)=X^*(T)$.
The group $W_k$ contains
a finite index subgroup canonically isomorphic to $\kbar^\times$;
let us assume that $r(\kbar^\times)\subseteq\Tdual(\C)$ (which
can always be arranged, possibly after conjugating $r$
by an element of $\widehat{G}(\C)$). If $\sigma$
and $\tau$ denote the two $\R$-isomorphisms $\kbar\to\C$
then one checks easily that for $z\in\kbar^\times$
we have $r(z)=\sigma(z)^{\lambda_\sigma}\tau(z)^{\lambda_\tau}$ for
$\lambda_\sigma,\lambda_\tau\in X_{*}(\Tdual)\otimes\C$ such that
$\lambda_\sigma-\lambda_\tau\in X_{*}(\Tdual)$.
Note that because we may not want to fix a preferred choice of isomorphism
$\kbar=\C$, we might sometimes
``have no preference between $\lambda_\sigma$ and
$\lambda_\tau$''; this makes our presentation diverge slightly from
other standard references, where typically one isomorphism is
preferred.

Because $\Tdual(\C)$ is usually not its own normaliser in $\Gdual(\C)$,
there is usually more than one way of conjugating $r(\kbar^\times)$
into $\Tdual(\C)$, with the consequence that the pair
$(\lambda_\sigma,\lambda_\tau)\in(X_*(\Tdual)\otimes\C)^2$
is not a well-defined invariant of $r_{\pi}$; it
is only well-defined up to the (diagonal) action of the Weyl group
$W=W(G,T)$ on $(X_{*}(\Tdual)\otimes\C)^2$. For notational convenience
however we will continue to refer to the elements $\lambda_\sigma$
and $\lambda_\tau$ of $X_*(\Tdual)\otimes\C$ and will check that none
of our important later definitions depend on the choice we have made.

If $k=\R$ then recall from the construction of the $L$-group
that there is an action of $\Gamma_k$ on $X^*(T)\otimes\C$, and we can
ask how our pair $(\lambda_\sigma,\lambda_\tau)$ behaves under this
action. Note first that $\Gamma_k$ also acts on $W$, and
the $\Gamma_k$-action on $X^*(T)$ is $W$-semilinear
(i.e.\ for $\gamma\in\Gamma_k$ and $w\in W$ and $x\in X^*(T)\otimes\C$
we have
$\gamma(wx)=\gamma(w)\gamma(x)$); this follows from a careful unwinding
of the definitions of these actions. 
If~$c$ is the non-trivial element of $\Gamma_k$ then
one checks that $(c(\lambda_\tau),c(\lambda_\sigma))$ is in the same $W$-orbit
as $(\lambda_\sigma,\lambda_\tau)$.

If $k$ is isomorphic to $\C$ then $\lambda_\sigma$ and $\lambda_\tau$ are in
general unrelated, subject to their difference being in $X^*(T)$.

We can put this information together as follows. Let us go back to
the general case $k=\R$ or $k\cong\C$. 
Let $\Hom_{\cts}(\kbar,\C)=\{\sigma,\tau\}$
denote the continuous field isomorphisms from $\kbar$ to $\C$. The group
$$(X^*(T)\otimes\C)^{\Hom_{\cts}(\kbar,\C)}$$
has a diagonal action of the Weyl group~$W$. It also has an action
of $\Gal(\kbar/k)$ defined using both the action of $\Gal(\kbar/k)$
on $X^*(T)$ and the action on $\Hom_{\cts}(\kbar,\C)$. Explicitly,
if we think of an
element of $(X^*(T)\otimes\C)^{\Hom_{\cts}(\kbar,\C)}$ as a function
$F:\Hom_{\cts}(\kbar,\C)\to X^*(T)\otimes\C$, then $\gamma\in\Gal(\kbar/k)$
sends $F$ to the function sending $\alpha\in\Hom_{\cts}(\kbar,\C)$
to $\gamma F(\alpha\gamma)$. We deduce
that (for both $k=\R$ and $k\cong\C$)
$(\lambda_\sigma,\lambda_\tau)$ give us a well-defined element
of
$$\left(\left(\left(X^*(T)\otimes\C\right)^{\Hom_{\cts}(\kbar,\C)}\right)/W\right)^{\Gal(\kbar/k)},$$
and hence a well-defined element of
$$\left(\left(\left(X^*(T)\otimes\C\right)/W\right)^{\Hom_{\cts}(\kbar,\C)}\right)^{\Gal(\kbar/k)},$$

See the comments after Lemma~\ref{241} for a $p$-adic variant of this
construction.

The Weyl group orbit of $(\lambda_\sigma,\lambda_\tau)$ in
$(X^*(T)\otimes\C)^2=(X^*(T)\otimes\C)^{\Hom_{\cts}(\kbar,\C)}$ is naturally an invariant
attached to the Weil group representation $r_{\pi}$ rather than
to $\pi$ itself, but we can access a large part of it
(however, not quite all of it) more intrinsically from $\pi$
using the Harish-Chandra isomorphism. We explain the story when $k=\R$;
the analogous questions in the case $k\cong\C$ can then
be resolved by restriction of scalars.

So, for this paragraph only, we assume $k=\R$. If
we regard $G(k)$ as a real Lie group with Lie algebra $\gg$,
then our maximal torus $T$ of $G_{\kbar}$ gives rise to
a Cartan subalgebra $\gh$ of $\gg\otimes_{\R}\kbar$. If we now break
the symmetry and use $\sigma$ to identify $\kbar$ with $\C$,
we can interpret the Lie
algebra of $T\times_{\kbar,\sigma}\C$ as a complex Cartan subalgebra
$\gh_\sigma^{\C}$ of the
complex Lie algebra $\gg^\C:=\gg\otimes_{\R}\C$. We have a canonical
isomorphism $\gh_\sigma^{\C}=X_*(T)\otimes_{\Z}\C$ (this isomorphism
implicitly also uses~$\sigma$, because $X_*(T)=\Hom(\GL_1/\kbar,T)$
was computed over $\kbar$). Now via the
Harish-Chandra isomorphism (normalised in the usual way, so it is independent
of the choice of Borel) one can interpret the infinitesimal
character of $\pi$ as a $W$-orbit in
$\Hom_{\C}(\gh_\sigma^{\C},\C)=X^*(T)\otimes_\Z\C=X_*(\Tdual)\otimes_\Z\C$.
Furthermore, this $W$-orbit contains~$\lambda_\sigma$ (this seems to be
well-known; see Proposition~7.4 of~\cite{MR1216197} for a sketch
proof). On the other hand, we note that applying this to both
$\sigma$ and $\tau$ gives us a pair of $W$-orbits in $X^*(T)\otimes\C$,
whereas our original construction of $(\lambda_\sigma,\lambda_\tau)$ gives
us the $W$-orbit of a pair, which is a slightly finer piece of
information (which should not be surprising: there are reducible
principal series representations of $\GL_2(\R)$ whose irreducible
subquotients (one discrete series, one finite-dimensional) have the
same infinitesimal character but rather different associated Weil
representations).

We go back now to the general case $k=\R$ or $k\cong\C$.
We have a $W$-orbit $(\lambda_\sigma,\lambda_\tau)$ in
$(X^*(T)\otimes_{\Z}\C)^{\Hom_{\cts}(\kbar,\C)}$ attached to $\pi$.
One obvious ``algebraicity'' criterion that one could impose
on $\pi$ is that $\lambda_\sigma\in X_*(\Tdual)=X^*(T)$.
Note that $\lambda_\sigma$
is only well-defined up to an element of the Weyl group, but
the Weyl group of course preserves $X_*(\Tdual)=X^*(T)$, so the
notion is well-defined. Also $\lambda_\sigma$ depends on the isomorphism
$\sigma:\overline{k}\to\C$, but if we use $\tau$ instead then the
notion remains unchanged, because
$\lambda_\sigma-\lambda_\tau\in X_*(\Tdual)$
and hence $\lambda_\sigma\in X_*(\Tdual)$ if and only if
$\lambda_\tau\in X_*(\Tdual)$.
This notion of algebraicity is frequently used in the
literature---one can give the connected component of the
Weil group of $k$ the structure of the
real points of an algebraic group $\mathcal{S}$ over $\R$ and one is asking
here that the Weil representation associated to $\pi$ restricts
to a map $\mathcal{S}(\R)\to\LG(\C)$ induced by a morphism of
algebraic groups $\mathcal{S}_{\C}\to\LG_{\C}$ via
the inclusion $\mathcal{S}(\R)\subset\mathcal{S}(\C)$.

\begin{defn} We say that an admissible Weil group representation
$r:W_k\to{}^LG(\C)$ is \emph{$L$-algebraic} if $\lambda_\sigma\in X^*(T)$.
We say that an irreducible representation
$\pi$ of $G(k)$ is \emph{$L$-algebraic} if the Weil group
representation associated to it by Langlands is $L$-algebraic.
\end{defn}

Note that the notion of $L$-algebraicity for a Weil group representation~$r$
depends only on the restriction of $r$ to $\overline{k}^\times$, and
the notion of $L$-algebraicity for a representation of $G(k)$ depends
only on the infinitesimal character of this representation when $k=\R$
(and we shall shortly see that the same is true when $k\cong\C$).

Later on we will need the following easy lemma. Say $k=\R$ and
$(\lambda_\sigma,\lambda_\tau)$ is a representative of the $W$-orbit
on $X_*(\widehat{T})^2$ associated to an $L$-algebraic $\pi_{\infty}$.
Regard $\lambda_\sigma$ and $\lambda_\tau$ as maps $\C^\times\to\widehat{T}$.
\begin{lemma}\label{cxconj}
If $i$ is a square root of $-1$ in $\kbar$ and $j$ is the usual
element of order~4 in $W_k$ then the element
$\alpha_\infty
:=\lambda_{\sigma}(i)\lambda_\tau(i)r_{\pi_{\infty}}(j)\in\LG(\C)$
has order dividing~2,
and its $\Gdual(\C)$-conjugacy class is well-defined
independent of (a) the choice of order of $\sigma$ and $\tau$, (b)
the choice of representative $(\lambda_\sigma,\lambda_\tau)$
of the $W$-orbit and (c) the choice of square root of $-1$ in $\kbar$.
\end{lemma}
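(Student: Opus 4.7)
The plan is to prove the order-dividing-$2$ property by a direct manipulation using two Weil-group relations, and then to check the three independence statements in turn. Set $\mu:=\lambda_\sigma+\lambda_\tau\in X_*(\widehat T)$ (integer-valued by $L$-algebraicity) and $t:=\mu(\sigma(i))=\lambda_\sigma(i)\lambda_\tau(i)\in\widehat T(\C)$, so that $\alpha_\infty=t\cdot r(j)$ with $r:=r_{\pi_\infty}$. The relations $j^2=-1$ and $jzj^{-1}=\bar z$ in $W_{\R}$ yield $r(j)^2=r(-1)=\mu(-1)=t^2$ and $r(j)\,r(z)\,r(j)^{-1}=r(\bar z)$ for $z\in\C^\times$, and these are the only inputs I will need.

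The key step is that $r(j)$ commutes with $t$; granting this, $\alpha_\infty^2=t\cdot(r(j)tr(j)^{-1})\cdot r(j)^2=t\cdot t\cdot t^2=t^4=\mu(1)=1$. To prove the commutation, I restrict $r$ to the positive reals $\R_{>0}\subset\C^\times\subset W_{\R}$: since $\sigma(x)=\tau(x)=x$ for $x\in\R_{>0}$, one has $r(x)=x^{\lambda_\sigma+\lambda_\tau}=\mu(x)$, and since $\bar x=x$ the basic relation gives $r(j)\,\mu(x)\,r(j)^{-1}=\mu(x)$ for every $x>0$. Writing $r(j)=g_j\cdot c$ with $g_j\in\widehat G(\C)$ and $c$ the Galois involution built into $\LG$, this identity becomes $g_j\cdot(c\circ\mu)(x)\cdot g_j^{-1}=\mu(x)$ for all $x>0$. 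Both sides are algebraic morphisms $\Gm\to\widehat G$ in the variable $x$, so by Zariski density of $\R_{>0}$ in $\Gm(\C)$ they agree on all of $\C^\times$. Hence $r(j)\,\mu(z)\,r(j)^{-1}=g_j(c\mu)(z)g_j^{-1}=\mu(z)$ for every $z\in\C^\times$; taking $z=\sigma(i)$ gives the desired $r(j)\,t\,r(j)^{-1}=t$.

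For the independence claims: (b) a Weyl change $(\lambda_\sigma,\lambda_\tau)\mapsto(w\lambda_\sigma,w\lambda_\tau)$ is effected by replacing $r$ with $\tilde w\,r\,\tilde w^{-1}$ for any lift $\tilde w\in N_{\widehat G}(\widehat T)(\C)$ of $w$; the new $\alpha_\infty$ is then $\tilde w\,\alpha_\infty\,\tilde w^{-1}$, so its $\widehat G(\C)$-conjugacy class is unchanged. For (a) and (c), under the natural reading in which $\lambda_\sigma(i)$ and $\lambda_\tau(i)$ denote values at the common point $\sigma(i)\in\C^\times$, both swapping $\sigma\leftrightarrow\tau$ and replacing $i\in\kbar$ by $-i$ change that point to $-\sigma(i)$, so $t\to\mu(-\sigma(i))=\mu(-1)\,\mu(\sigma(i))=t^2\cdot t=t^{-1}$ (using $t^4=1$), and therefore $\alpha_\infty\to t^{-1}r(j)=t^{-2}\alpha_\infty$. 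To show $t^{-2}\alpha_\infty$ is $\widehat G(\C)$-conjugate to $\alpha_\infty$, I further conjugate $r$ so that $g_j\in N_{\widehat G}(\widehat T)(\C)\cdot c$ (standard for real Langlands parameters); then $\phi:=\operatorname{Ad}(r(j))$ is an involution of $\widehat T(\C)$ (since $r(j)^2\in\widehat T$), and I look for $s\in\widehat T(\C)$ with $s\,\phi(s)^{-1}=t^{-2}$, which unravels to $s\,\alpha_\infty\,s^{-1}=t^{-2}\alpha_\infty$. The map $s\mapsto s\,\phi(s)^{-1}$ from $\widehat T(\C)$ onto $\{x:\phi(x)=x^{-1}\}$ is surjective by the vanishing of $H^1(\langle\phi\rangle,\widehat T(\C))$, a consequence of divisibility, and $t^{-2}$ lies in this target since the commutation $\phi(t)=t$ proved above gives $\phi(t^{-2})=t^{-2}$, while $t^4=1$ gives $(t^{-2})^{-1}=t^2=t^{-2}$.

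The main obstacle is the commutation $r(j)\,t\,r(j)^{-1}=t$, whose proof pivots on the observation that $r|_{\R_{>0}}=\mu|_{\R_{>0}}$ and on the use of Zariski density to promote a real-variable identity to $\Gm(\C)$; after that, only bookkeeping plus the standard vanishing of $H^1$ of a divisible torus remain.
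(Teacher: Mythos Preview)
Your computation that $\alpha_\infty^2=1$ is correct and uses a genuinely different idea from the paper. The paper invokes the \emph{swap} relation $\lambda_\sigma(z)\,r(j)=r(j)\,\lambda_\tau(z)$ directly, whereas you prove the weaker \emph{fixing} relation $r(j)\,\mu(z)\,r(j)^{-1}=\mu(z)$ via Zariski density on $\R_{>0}$. Both work, and your route is self-contained.

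However, there is a genuine gap in your treatment of (a) and (c). You assert that $H^1(\langle\phi\rangle,\widehat T(\C))=0$ ``as a consequence of divisibility''; this is false. Divisibility of $\widehat T(\C)\cong(\C^\times)^n$ does not force $H^1$ to vanish: for instance $H^1(\Z/2,\C^\times)$ with trivial involution is $\{\pm 1\}$. Your target element $t^{-2}$ is $\phi$-fixed (you proved $\phi(t)=t$), so on any $\phi$-fixed $\C^\times$-factor the coboundary map is trivial and your surjectivity claim fails. The conclusion you want is nonetheless true, but it requires exhibiting an explicit $s$ rather than appealing to cohomology. For example, with $\nu:=\lambda_\sigma-\lambda_\tau\in X_*(\widehat T)$ one has $\phi(\nu)=-\nu$ (check this on $|u|=1$ via $r(j)r(w)r(j)^{-1}=r(\bar w)$ and extend by Zariski density, as you did for $\mu$), and then $s=\nu(\sigma(i))$ satisfies $s\,\phi(s)^{-1}=s^2=\nu(-1)=\lambda_\sigma(-1)\lambda_\tau(-1)=\mu(-1)=t^{-2}$. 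Alternatively, and more simply, $s=\lambda_\sigma(-1)$ works: $\phi(\lambda_\sigma(-1))=\lambda_\tau(-1)$, so $s\,\phi(s)^{-1}=\lambda_\sigma(-1)\lambda_\tau(-1)=t^{-2}$.

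This last choice is exactly what the paper does: it observes directly that conjugating $\alpha_\infty$ by $\lambda_\sigma(-1)$ gives the element with $i$ replaced by $-i$, a one-line check once you have the swap relation. The paper also reads the notation so that (a) is immediate (the product $\lambda_\sigma(i)\lambda_\tau(i)$ is symmetric in $\sigma,\tau$ since the factors commute), whereas your reading --- evaluating both cocharacters at $\sigma(i)$ --- makes (a) nontrivial and forces you through the coboundary argument. Either reading is defensible, but be aware that yours is the harder one and that the paper's proof is considerably shorter.
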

\begin{proof} Set $r:=r_{\pi}$. We have
$\lambda_\sigma(z)r(j)=r(j)\lambda_\tau(z)$, and $\lambda_\sigma(z)$
commutes with $\lambda_\tau(z')$, and from this it is easy to check
that $(\alpha_\infty)^2=1$ and that $\alpha_\infty$ is unchanged if we switch
$\sigma$ and $\tau$. Changing representative of the $W$-orbit just
amounts to conjugating $r$ by an element of $\Gdual(\C)$ and hence
conjugating $\alpha_\infty$ by this same element. Finally one checks
easily that conjugating $\alpha_{\infty}$ by $\lambda_{\sigma}(-1)$
gives us the analogous element with $i$ replaced by $-i$.
\end{proof}

The notion of $L$-algebraicity will be very important to us later,
however it is not hard to find
automorphic representations that ``appear algebraic in nature''
but whose infinite components are not $L$-algebraic.
For example one can check that if $E$ is an elliptic curve
over $\Q$ and $\pi$ is the associated automorphic representation
of $\PGL_2/\Q$, then the local component
$\pi_\infty$, when considered as a representation
of $\PGL_2(\R)$, is not $L$-algebraic: the element
$\lambda_\sigma$ above is in $X_*(\Tdual)\otimes_{\Z}\frac{1}{2}\Z$ but
not in $X_*(\Tdual)$. What has happened is that the canonical
normalisation of the Harish-Chandra homomorphism
involves (at some point in the definition)
a twist by half the sum of the positive roots,
and it is this twist that has taken us out of the lattice in the elliptic
curve example.

This observation motivates a \emph{second} notion of algebraicity---which
it turns out is the one used in Clozel's paper for the group $\GL_n$.
Let us go back to the case of a general connected reductive $G$ over $k$,
either the reals or a field isomorphic to the complexes.
Recall that we have fixed
$T\subseteq B\subseteq G_{\kbar}$ and hence we have the notion
of a positive root in $X^*(T)$. Let $\delta\in X^*(T)\otimes\C$
denote half the sum of the positive roots. We observed above
that the assertion ``$\lambda_\sigma\in X^*(T)$'' was independent of the choice
of $B$ and of the isomorphism $\overline{k}\cong\C$.
But the assertion ``$\lambda_\sigma-\delta\in X^*(T)$'' is also independent
of such choices, for if $\lambda_\sigma-\delta\in X^*(T)$ and $w$ is in the
Weyl group, then $w.\lambda_\sigma-\delta=w(\lambda_\sigma-\delta)-(\delta-w.\delta)\in X^*(T)$,
and also $\lambda_\tau-\delta=(\lambda_\sigma-\delta)+(\lambda_\tau-\lambda_\sigma)\in X^*(T)$.

\begin{defn} We say that the admissible Weil group representation
$r:W_k\to{}^LG(\C)$ is \emph{$C$-algebraic}
if $\lambda_\sigma-\delta\in X^*(T)$. We say that the irreducible
admissible representation $\pi$ of $G(k)$ is \emph{$C$-algebraic}
if the Weil group representation associated to $\pi$ via Langlands'
construction is $C$-algebraic.
\end{defn}

Again, $C$-algebraicity for $r$ only depends on the restriction of $r$ to
$\overline{k}^\times$, and $C$-algebraicity for $\pi$ only
depends on its infinitesimal character when $k=\R$ (and
as we are about to see, the same is true for $k\cong\C$).

Here are some elementary remarks about these definitions. If $\delta\in X^*(T)$
then the notions of $L$-algebraic and $C$-algebraic coincide.
If $G_1$ and $G_2$ are connected reductive over $k$, if $r_i$ ($i=1,2$)
are admissible representations $r_i:W_k\to{}^LG_i(\C)$, then there
is an obvious notion of a product $r_1\times r_2:W_k\to{}^L(G_1\times G_2)(\C)$
and $r_1\times r_2$ is $L$-algebraic (resp.\ $C$-algebraic) iff $r_1$
and $r_2$ are. One can furthermore check
that if $k$ denotes an algebraic closure of the reals
and $G/k$ is connected reductive, and if $H=\Res_{k/\R}(G)$,
and if $\pi$ is an irreducible admissible representation
of $G(k)=H(\R)$, then $\pi$ is $L$-algebraic (resp.\ $C$-algebraic)
when considered
as a representation of $G(k)$ if and only if it is $L$-algebraic
(resp.\ $C$-algebraic) when considered as a representation of $H(\R)$.
This assertion comes from a careful reading of sections~4 and~5
of~\cite{MR546608}. Indeed, if $T$ is a maximal torus of $G/k$ then
$\Res_{k/\R}(T)$ is a maximal torus of $H/\R$, and if
$\lambda_\sigma,\lambda_{\tau}\in X^*(T)\otimes\C$ are the parameters attached to a
representation of $G(k)$, then
$\lambda_\sigma\oplus\lambda_\tau$ and
$\lambda_\tau\oplus\lambda_\sigma\in (X^*(T)\oplus X^*(T))\otimes\C$
are the parameters attached to the
corresponding representation of $H(\R)$ (identifying $\widehat{H}(\C)$
with $\widehat{G}(\C)^2$), and if $\delta$ is half the
sum of the positive roots for $G$ then $\delta\oplus\delta$ is half
the sum of the positive roots for $H$. As a consequence, we see
that both $L$-algebraicity and $C$-algebraicity of a representation
$\pi$ of $G(k)$ are conditions that only depend on the
infinitesimal character of the representation of the underlying real
reductive group.

Let us again attempt to illustrate the difference between the two notions
of algebraicity by
considering the trivial 1-dimensional representation of $\GL_2(\R)$.
The Local Langlands correspondence associates to this the 2-dimensional
representation $|.|^{1/2}\oplus|.|^{-1/2}$ of the Weil group of
the reals. If we choose the diagonal torus in $\GL_2$ and identify
its character group with $\Z^2$ in the obvious way, then
we see that $\lambda_\sigma=\lambda_\tau=\delta=(\frac{1}{2},-\frac{1}{2})$. In particular,
$\lambda_\sigma$ is not in $X^*(T)$, but $\lambda_\sigma-\delta$ is,
meaning that this representation is $C$-algebraic but not $L$-algebraic.
Another example
would be the character $|\det|^{1/2}$ of $\GL_2(\R)$; this is
associated to the representation $|.|\oplus1$ of the Weil group,
and so $\lambda_\sigma=\lambda_\tau=(1,0)$ (or $(0,1)$, allowing for the Weyl group
action) and on this occasion $\lambda_\sigma$ is in $X^*(T)$ but $\lambda_\sigma-\delta$
is not, hence the representation is $L$-algebraic but not $C$-algebraic.
Finally let us consider the discrete series representation
of $\GL_2(\R)$ with trivial central character associated
to a weight~2 modular form. The associated representation of
the Weil group sends an element $z$ of $\overline{\R}^\times$
to a matrix with eigenvalues $\sqrt{z.\overline{z}}/z$
and $\sqrt{z.\overline{z}}/\overline{z}$, the square root
being the positive square root. We see that
the set $\{\lambda_\sigma,\lambda_\tau\}$ equals the set
$\{(\frac{1}{2},-\frac{1}{2}),(-\frac{1}{2},\frac{1}{2})\}$
(with ambiguities due to both the Weyl group action and the two choices
of identification of $\overline{\R}$ with $\C$)
and neither $\lambda_\sigma$ nor $\lambda_\tau$ are in $X^*(T)$, but both
of $\lambda_\sigma-\delta$ and $\lambda_\tau-\delta$ are, so again
the representation is $C$-algebraic but not $L$-algebraic.

\subsection{The Hodge--Tate cocharacter.}\label{htcc}

In this subsection, let $k$ be a finite extension of the $p$-adic
numbers $\Q_p$. Let $H$ be a (not necessarily connected) reductive
algebraic group over a fixed algebraic closure $\Qpbar$ of $\Q_p$.
Note that we do not fix an embedding $k\to\Qpbar$. Let $\kbar$ denote
an algebraic closure of $k$ and let $\rho:\Gal(\kbar/k)\to H(\Qpbar)$
denote a continuous group homomorphism. We say
that $\rho$ is crystalline/de Rham/Hodge--Tate if for some (and hence
any) faithful representation $H\to\GL_N$
over $\Qpbar$, the resulting $N$-dimensional Galois representation
is crystalline/de Rham/Hodge--Tate. Let $C$ denote the completion
of $\kbar$. Then for any continuous injection of fields $i:\Qpbar\to C$
there is an associated Hodge--Tate cocharacter $\mu_i:(\GL_1)_C\to H_C$
(where the base extension from $H$ to $H_C$ is via $i$).
We know of no precise reference for the construction of $\mu_i$ in this
generality; if $H$ were defined over $\Q_p$ and $\rho$ took
values in $H(\Q_p)$ then $\mu_i$ is constructed in~\cite{MR563476}.
The general case can be reduced to this case in the following way:
$H$ descends to group $H_0$ defined
over a finite extension $E$ of $\Q_p$, and a standard
Baire category theorem argument shows that $\rho$ takes values in $H_0(E')$
for some finite extension $E'$ of $E$. Now let $H_1=\Res_{E'/\Q_p}H_0$,
so $\rho$ takes values in $H_1(\Q_p)$, and Serre's construction of $\mu$
then yields $\mu_i$ as above which can be checked to be well-defined
independent of the choice of $H_0$ and so on via an elementary calculation
(do the case $H=\GL_n$ first). If $H^0$ is the identity component of~$H$
and one replaces $\rho$ by $m\rho m^{-1}$, with $m\in H^0(\Qpbar)$, then
$\mu_i$ becomes $m\mu_i m^{-1}$.

Note that there is a choice of sign that one has to make when defining $\mu_i$;
we follow Serre so, for example, the cyclotomic character gives
rise to the identity map $\GL_1\to\GL_1$.

The $H^0_C$-conjugacy class of $\mu_i$
arises as the base extension (via~$i$) of a cocharacter
$\nu_i:(\GL_1)_{\Qpbar}\to H^0$ over $\Qpbar$. Now any $i:\Qpbar\to C$
is an injection whose image is~$\kbar$ and hence induces an isomorphism
$j=$``$i^{-1}$''$:\kbar\to\Qpbar$. We set 
$\nu_j:=\nu_i$, an $H^0$-conjugacy class of
maps $\GL_1\to H$ over $\Qpbar$.

In applications, $H$ will be related to an $L$-group as follows.
If $G$ is connected and reductive over $k$, and
$\rho:\Gal(\kbar/k)\to{}^LG(\Qpbar)$ is an admissible representation,
then, because $G$ splits over a finite Galois extension~$k'$ of $k$, $\rho$
will descend to a representation
$\rho:\Gal(\kbar/k)\to\Gdual(\Qpbar)\rtimes\Gal(k'/k)$.
The target group can be made into the $\Qpbar$-points of an algebraic
group $H$ over $\Qpbar$, and if the associated representation
is Hodge--Tate then the preceding arguments associate a
$\Gdual(\Qpbar)=H^0(\Qpbar)$-conjugacy
class of maps $\nu_j:\GL_1\to H_{\Qpbar}$ to each $j:\kbar\to\Qpbar$. If $\Tdual$
is a torus in $\Gdual$ as usual, then $\nu_j$ gives rise to an
element of $X_*(\Tdual)/W=X^*(T)/W$, with $T$ a maximal
torus of $G_{\kbar}$ and $W$ its Weyl group.
In particular this construction as $j$ varies gives an element~$\nu$ of
$$(X^*(T)/W)^{\Hom_{\cts}(\kbar,\Qpbar)}.$$ We are very grateful to Florian Herzig
for drawing our attention to this construction and pointing out the formal
similarity with the calculations in the previous section.

Just as in the previous section, $\Gal(\kbar/k)$ acts on
$(X^*(T)/W)^{\Hom_{\cts}(\kbar,\Qpbar)}$: it acts on each $X^*(T)/W$
in the usual way, and it also
acts on $\Hom_{\cts}(\kbar,\Qpbar)$ by composition, so if $F:\Hom_{\cts}(\kbar,\Qpbar)\to X^*(T)/W$ and $\gamma\in\Gal(\kbar/k)$ we can define
$\gamma F$ by $(\gamma F)(\alpha)=\gamma(F(\alpha\gamma))$.
\begin{lemma}\label{241} $\nu$ is invariant under the action of $\Gal(\kbar/k)$.
\end{lemma}
\begin{proof} If we unravel what is being claimed, we see that it
suffices to prove the following. Say $j:\kbar\to\Qpbar$ is a continuous
isomorphism, and $\gamma\in\Gal(\kbar/k)$. We wish to show
that $\nu_{j\gamma}$ is $H^0$-conjugate to $\gamma^{-1}(\nu_j)$
which (by the construction of the $L$-group as a semidirect product)
is equal to the cocharacter sending $z\in\Qpbar^\times$
to $\rho(\gamma)^{-1}\nu_j(z)\rho(\gamma)$. The map $j\gamma:\kbar\to\Qpbar$
has an inverse $\gamma^{-1}i:\Qpbar\to\kbar$ which extends to a map
$\Qpbar\to C$. Let $\gamma^{-1}i\rho$ denote the induced map $\Gal(\kbar/k)\to H(C)$. 
If we base extend everything to $C$ via $\gamma^{-1}i$ then we see that
our task is to check that $\mu_{\gamma^{-1}i}$ is $H^0(C)$-conjugate
to the map
$z\mapsto(\gamma^{-1}i\rho)(\gamma)^{-1}\mu_i(z)(\gamma^{-1}i\rho)(\gamma)$. 
But in fact these two maps are \emph{equal}. To check this we can reduce
to the case $H=\GL_n/\Qpbar=\Aut(V_{\Qpbar})$, and the result then follows
from the commutativity of the following diagram (with $z\in C^\times=\GL_1(C)$).
\[\begin{gathered}[b]
\xymatrix{
V\otimes_{\gamma^{-1}i}C\ar[r]^{1\otimes\gamma}&V\otimes_iC\ar[rr]^{\rho(\gamma^{-1})\otimes\gamma^{-1}}&\hspace{1cm}&V\otimes_{\gamma^{-1}i}C\\
V\otimes_{\gamma^{-1}i}C\ar[u]^{\mu_i(z)}\ar[r]^{1\otimes\gamma}&V\otimes_iC\ar[u]^{\mu_i(\gamma
  z)}\ar[rr]^{\rho(\gamma^{-1})\otimes\gamma^{-1}}&\hspace{1cm}&V\otimes_{\gamma^{-1}i}C\ar[u]^{\mu_{\gamma^{-1}i}(z)}}\\[-\dp\strutbox]
\end{gathered}
\qedhere
\]
\end{proof}
In particular we have $$\nu\in\left((X^*(T)/W)^{\Hom_{\cts}(\kbar,\Qpbar)}\right)^{\Gal(\kbar/k)},$$ analogously to the archimedean case.
\section{Global definitions, and the first
  conjectures.}\label{sec:main defns, first conjecture}

\subsection{Algebraicity and arithmeticity.}

Let $G$ be a connected reductive group defined over a number
field $F$. Fix an algebraic closure $\overline{F}$ of~$F$
and form the $L$-group
$\LG=\Gdual\rtimes\Gal(\overline{F}/F)$ as in the previous section.
For each place $v$ of $F$, fix an algebraic closure $\overline{F_{v}}$
of $F_{v}$, and an embedding $\overline{F}\into\overline{F_{v}}$. 
Nothing we do depends in any degree of seriousness on these choices---changing
them will just change things ``by an inner automorphism''.

Let $\pi$ be an automorphic representation of $G$. Then we may write
$\pi=\otimes'_{v}\pi_{v}$, a restricted tensor product, where $v$ runs
over all places (finite and infinite) of~$F$. Recall that in
the previous section we defined notions of $L$-algebraic and
$C$-algebraic for certain representations of real and complex groups.
We now globalise these definitions.

\begin{defn}\label{defn:L-algebraic}
We say that $\pi$ is \emph{$L$-algebraic} if $\pi_v$ is $L$-algebraic
for all infinite places~$v$ of~$F$.
\end{defn}

\begin{defn}\label{defn: C-algebraic}We say that $\pi$ is \emph{$C$-algebraic}
if $\pi_v$ is $C$-algebraic for all infinite places~$v$ of~$F$.
\end{defn}

Note that, for $G=\GL_n$, the notion of $C$-algebraic coincides
(in the isobaric case) with Clozel's notion of algebraic used
in~\cite{MR1044819},
although for $\GL_2$ this choice of normalisation goes back to Hecke.
Note also that restriction of scalars preserves both notions: if
$K/F$ is a finite extension of number fields
and $\pi$ is an automorphic representation of $G/K$ then $\pi$ is
$L$-algebraic (resp.\ $C$-algebraic) when considered as a representation
of $G(\A_K)$ if and only if $\pi$ is $L$-algebraic (resp.\ $C$-algebraic)
when considered as a representation of $\Res_{K/F}(G)(\A_F)$. Indeed,
this is a local statement and we indicated the proof
in~\S\ref{locallanglandsatinfinity}.

As examples of these notions, we observe that for Hecke characters of number
fields, our notions of $L$-algebraic and $C$-algebraic both
coincide with the classical notion of being algebraic or of type $A_0$.
For $\GL_2$ the notions diverge:
the trivial 1-dimensional representation of $\GL_2(\A_{\Q})$
is $C$-algebraic but not $L$-algebraic, whereas
the representation $|\det|^{1/2}$ of $\GL_2(\A_{\Q})$
is $L$-algebraic but not $C$-algebraic. For $\GL_3/\Q$ the notions
of $L$-algebraic and $C$-algebraic coincide again (because half
the sum of the positive roots in the weight lattice); indeed they
coincide for $\GL_n$ over a number field if $n$ is odd, and differ
by a non-trivial twist if $n$ is even.

The above definitions depend only on the behaviour of $\pi$
at infinite places. The ones below depend only on $\pi$ at
the finite places; we remind the reader that the crucial
local definitions are given in Definition~\ref{definedoverEdef}.

\begin{defn}\label{defn:L-arithmetic}
We say that $\pi$ is \emph{$L$-arithmetic} if there is a finite
subset $S$ of the places of $F$, containing all infinite places and
all places where $\pi$ is ramified, and a number
field $E\subset\C$, such that for each $v\notin S$, the Satake
parameter of $\pi_v$ is defined over~$E$.
\end{defn}

\begin{defn}\label{defn:C-arithmetic}
	We say that $\pi$ is \emph{$C$-arithmetic} if there is a finite
subset $S$ of the places of $F$, containing all infinite places and
all places where $\pi$ is ramified, and a number
field $E\subset\C$, such that $\pi_v$ is defined over $E$ for
all $v\notin S$.
\end{defn}

Again we note that for $K/F$ a finite extension and $\pi$ an automorphic
representation of $G/K$, $\pi$ is $L$-arithmetic (resp.\ $C$-arithmetic)
if and only if $\pi$ considered as an automorphic representation
of $\Res_{K/F}(G)$ is.

Let us consider some examples.
An automorphic representation $\pi$ of $\GL_n/F$
will be $L$-arithmetic if there is a number field such
that all but finitely many of the Satake parameters attached to $\pi$
have characteristic polynomials with coefficients in that number
field. So, for example, the trivial 1-dimensional representation
of $\GL_2(\A_{\Q})$ would not be $L$-arithmetic, because the trace of the
Satake parameter at a prime $p$ is $p^{1/2}+p^{-1/2}=\frac{p+1}{\sqrt{p}}$,
and any subfield of $\C$ containing $(p+1)/\sqrt{p}$ for infinitely many
primes $p$ would also contain $\sqrt{p}$ for infinitely many primes $p$
and hence cannot be a number field. However
it would be $C$-arithmetic, because for all primes $p$,
$\pi_p$ is the base extension to $\C$
of a representation of $\GL_2(\A_{\Q})$ on a vector space over $\Q$.
Similarly, the representation $|\det|^{1/2}$ of $\GL_2(\A_{\Q})$
is $L$-arithmetic,
because all Satake parameters are defined over $\Q$.
However this representation
is not $C$-arithmetic: each individual $\pi_p$ is defined over a number field
but there is no number field over which infinitely many of the $\pi_p$
are defined,
again because such a number field would have to contain the square root
of infinitely many primes.

Now let $\pi$ be an arbitrary automorphic representation for an arbitrary
connected reductive group $G$ over a number field.

\begin{conj}\label{conj:LarithmeticLalgebraic}$\pi$ is $L$-arithmetic if and only if it is $L$-algebraic.\end{conj}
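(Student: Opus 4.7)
The plan is to prove the two implications separately; both appear to be genuinely difficult and, as the framing of the paper suggests, are essentially open in full generality. First I would unwind the definitions: $L$-algebraicity is a constraint on the archimedean Harish-Chandra infinitesimal characters (namely that $\lambda_\sigma \in X^*(T)$ for each $v\mid\infty$), whereas $L$-arithmeticity is a constraint on the Satake parameters at almost all finite places of $F$. Since both notions descend through restriction of scalars (noted after Definitions~\ref{defn:L-algebraic} and~\ref{defn:L-arithmetic}), there is a reduction to the case $F=\Q$; composition with a faithful representation of $\Gdual$ would in principle allow one to focus on $\GL_n$, and indeed the conjecture is morally a statement about compatible systems of Galois representations.

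The direction $L$-algebraic $\Rightarrow$ $L$-arithmetic proceeds via the conjectured attachment of $\ell$-adic Galois representations to $L$-algebraic automorphic representations (the main conjecture of the paper). Granted such a compatible system $r_{\pi,\ell}:\Gal(\overline{F}/F)\to{}^LG(\Qlbar)$, for each unramified place $v$ of $\pi$ the Frobenius semisimplification of $r_{\pi,\ell}|_{W_{F_v}}$ should match the image of the Satake parameter of $\pi_v$ under the unramified local Langlands correspondence recalled in \S\ref{satakeparams}. The key property of any reasonable compatible system is that the characteristic polynomial of Frobenius at each good place has coefficients in a single number field $E$ independent of $\ell$ and $v$; combined with the $\GL_n$ lemma at the end of \S\ref{satakeparams} (extended to $G$ via a faithful representation of $\Gdual$), this shows that the Satake parameter of $\pi_v$ is defined over $E$ for almost all $v$, giving $L$-arithmeticity.

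For the converse $L$-arithmetic $\Rightarrow$ $L$-algebraic a natural route is: given Satake parameters defined over a number field $E$, fix a prime $\lambda$ of $E$ above some rational prime $p$ and use the collection of Frobenius conjugacy classes to attempt to build a continuous $\lambda$-adic representation $r_\pi:\Gal(\overline{F}/F)\to{}^LG(\Qpbar)$ via pseudorepresentation or determinant techniques. One would then show that $r_\pi$ is Hodge--Tate at places above $p$ and compute its Hodge--Tate cocharacters $\nu_j$ as in \S\ref{sec:review and notation}: integrality of these cocharacters is automatic from the construction (they live in $X_*(\Gdual)/W$), and a conjectural local-global compatibility at infinity relating them to the archimedean $\lambda_\sigma$ would force $\lambda_\sigma\in X^*(T)$, yielding $L$-algebraicity.

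The main obstacle is this converse direction. Even for $G=\GL_n$ it is open in essentially complete generality: producing a Galois representation from rationality of Hecke eigenvalues is precisely the kind of reciprocity statement the Langlands program aims at, and it is known only in rather special situations (Hecke characters, classical modular forms, certain cohomological representations on unitary groups). I would not expect a uniform argument here; any proof would likely have to proceed through motivic or automorphic inputs that are themselves conjectural, and even the local-global compatibility at infinity required to compare Hodge--Tate cocharacters to infinitesimal characters is a delicate open problem outside of regular, well-understood settings.
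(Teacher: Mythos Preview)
The statement is a \emph{conjecture} in the paper, not a theorem; the authors explicitly say that both it and its companion Conjecture~\ref{conj:CarithmeticCalgebraic} are ``seemingly completely out of reach'' and they do not attempt a proof in general. So there is no proof in the paper for you to match. Your recognition that both directions are essentially open is correct, and your discussion of the obstacles is reasonable.

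That said, the paper does establish the conjecture in special cases, and its methods there are rather different from your sketch. For tori (\S\ref{section:tori}) the authors reduce to the split case by base change and then invoke a transcendence theorem of Waldschmidt for the direction $L$-arithmetic $\Rightarrow$ $L$-algebraic; the other direction is classical. For cuspidal $\pi$ on $\GL_2/\Q$ which is discrete series at infinity (\S\ref{gl2ex}) they argue directly with the six exponentials theorem and Cebotarev to pin down the allowable twisting parameter~$s$. In neither case do they pass through the existence of Galois representations. Your proposed route for $L$-algebraic $\Rightarrow$ $L$-arithmetic via Conjecture~\ref{conj:existence of Galois representations} is conditional on a conjecture the paper regards as at least as hard as the one under discussion, so it is not really a reduction; and your converse route (building a Galois representation out of $E$-rational Satake parameters and then reading off integrality of Hodge--Tate cocharacters) is, as you yourself note, essentially the full reciprocity problem. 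The transcendence-theoretic approach the paper uses in the known cases is genuinely orthogonal to this and worth being aware of: it attacks the archimedean parameters directly from arithmetic constraints on the Hecke eigenvalues, without constructing any Galois representation at all.
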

	
\begin{conj}\label{conj:CarithmeticCalgebraic}$\pi$ is $C$-arithmetic if and only if it is $C$-algebraic.\end{conj}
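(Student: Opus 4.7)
The plan is to deduce Conjecture~\ref{conj:CarithmeticCalgebraic} from Conjecture~\ref{conj:LarithmeticLalgebraic} by the twisting strategy foreshadowed in the introduction. Given an automorphic representation $\pi$ of $G(\A_F)$, I would reduce (via lifting) to an automorphic representation $\widetilde{\pi}$ on the central extension $\widetilde{G}$ of $G$ by $\GL_1$ chosen precisely so that half the sum of the positive roots $\delta$ lies in the character lattice of the corresponding maximal torus of $\widetilde{G}$. On $\widetilde{G}$ there is then a genuine algebraic character $\xi$ built from $\delta$, and the adelic character $\chi \colon \widetilde{G}(\A_F) \to \C^\times$ defined by $\chi(g) = |\xi(g)|^{1/2}$ is an honest unramified Hecke character. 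Twisting $\widetilde{\pi}$ by $\chi$ converts $C$-algebraic into $L$-algebraic at every infinite place essentially tautologically, since the two notions differ precisely by $\delta \in X^*(T) \otimes \C$.

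The local work would then be to check that at each finite unramified place $v$, the same twist translates the condition that $\widetilde{\pi}_v$ be defined over a number field $E$ (Definition~\ref{definedoverEdef}(i)) into the condition that the Satake parameter of $\widetilde{\pi}_v \otimes \chi_v$ be defined over $E$ (Definition~\ref{definedoverEdef}(ii)), possibly after a harmless finite enlargement of $E$. The discrepancy between the two $\Q$-structures on the unramified Hecke algebra is exactly $\delta^{1/2}$, and Remark~\ref{unramlltwist} applied with exponent $s = 1/2$ and the character $\xi$ just described shows that the Satake parameter of the twist differs from that of $\widetilde{\pi}_v$ by the factor $q_v^{1/2}$ that is responsible for the mismatch. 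Granting this local compatibility, Conjecture~\ref{conj:LarithmeticLalgebraic} applied to $\widetilde{\pi} \otimes \chi$ on $\widetilde{G}$ produces the chain of equivalences
\[
\pi \text{ is } C\text{-arithmetic} \iff \widetilde{\pi}\otimes\chi \text{ is } L\text{-arithmetic} \iff \widetilde{\pi}\otimes\chi \text{ is } L\text{-algebraic} \iff \pi \text{ is } C\text{-algebraic}.
\]

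The overwhelming obstacle is Conjecture~\ref{conj:LarithmeticLalgebraic} itself, which is far beyond current technology: the direction ``$L$-algebraic $\Rightarrow$ $L$-arithmetic'' essentially subsumes the conjectural construction of compatible systems of Galois representations attached to $L$-algebraic automorphic representations (the rationality of traces of Frobenius at unramified places providing arithmeticity), while the reverse implication incorporates a strong Ramanujan/purity statement whose only plausible proof passes through motives or transcendence. Conditional on that conjecture, the remaining effort is the construction of $\widetilde{G}$ and its adelic representation theory, the lifting of an arbitrary $\pi$ to a $\widetilde{\pi}$ preserving both $C$-algebraicity and $C$-arithmeticity in \emph{both} directions (so that the equivalence is genuine, not just one-way), and the delicate local bookkeeping at each finite place verifying that $\chi_v$ precisely interchanges the two $\Q$-structures on $H_\C(G(F_v), K_v)$. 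This last step is where the interaction between Remark~\ref{unramlltwist} and the factor $q_v^{1/2}$ must be tracked exactly, and where passage to the cover $\widetilde{G}$ is unavoidable.
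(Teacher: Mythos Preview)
Your proposal is correct and takes essentially the same approach as the paper. The statement is a conjecture that the paper explicitly calls ``seemingly completely out of reach,'' and the paper's only progress on it is Proposition~\ref{conjs_are_equiv}, which reduces Conjecture~\ref{conj:CarithmeticCalgebraic} for $G$ to Conjecture~\ref{conj:LarithmeticLalgebraic} for the cover $\widetilde{G}$ via exactly the lift-and-twist mechanism you describe (Lemma~\ref{z-extension_preserves_stuff} for the lift, Propositions~\ref{prop:twistinglalgtocalg} and~\ref{prop:twistinglarithtocarith} for the twist). One small terminological slip: the paper does not construct $\widetilde{G}$ so that $\delta$ itself becomes integral, but rather so that $\widetilde{G}$ possesses a canonical \emph{twisting element} $\theta$ (Definition~\ref{twistingelement}, Proposition~\ref{prop:newgoodcover}); the relevant twist is then $|\cdot|^{\theta-\delta}$, with $2(\theta-\delta)$ an honest character of the maximal split torus quotient---this is the precise form of your $\chi$.
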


These conjectures are seemingly completely out of reach. The general
ideas behind them (although perhaps not the precise definitions
we have given) seem to be part of the folklore nowadays, although it
is worth pointing out that as far as we know the first person to raise
such conjectures explicitly was Clozel in~\cite{MR1044819}
in the case $G=\GL_n$.

For the group
$\GL_1$ over a number field both of the conjectures are true;
indeed in this case both
conjectures say the same thing, the ``algebraic implies arithmetic''
direction being relatively standard, and the ``arithmetic implies
algebraic'' direction being a non-trivial result in transcendence
theory proved by Waldschmidt in~\cite{MR608530}. We prove both conjectures
for a general torus in section~\ref{section:tori}, for the most part
by reducing to the case of $\GL_1$. On the other hand, neither
direction of either conjecture is known for the group $\GL_2/\Q$,
although in this case the conjectures turn out to be equivalent
and there are some partial results in both directions. In
particular, Sarnak has shown (\cite{MR1975448}) that for a Maass form with
coefficients in $\Z$, the associated $L$-arithmetic automorphic
representation is necessarily $L$-algebraic, and this result was generalised to the case of
coefficients in certain quadratic fields in \cite{MR1962012}. Furthermore,
if $\pi$ is a cuspidal automorphic representation for $\GL_2/\Q$
which is discrete series at infinity, then we show in~\S\ref{gl2ex}
that both conjectures hold for $\pi$. However if $\pi$ is principal
series at infinity then both directions of both conjectures are in
general open.

If one makes conjectures~\ref{conj:LarithmeticLalgebraic}
and~\ref{conj:CarithmeticCalgebraic} for all groups $G$
simultaneously, then they are in fact equivalent, by the results of
section \ref{section:twisting} below; for groups with a twisting
element (see section \ref{section:twisting} for this terminology) this follows from Propositions \ref{prop:twistinglalgtocalg}
and \ref{prop:twistinglarithtocarith}, and the general case reduces to
this one by passage to the covering groups of
Section~\ref{section:twisting}---see Proposition~\ref{conjs_are_equiv}.

\subsection{Galois representations attached to automorphic representations.}

We now fix a prime number~$p$
and turn to the notion of associating $p$-adic Galois representations
to automorphic representations. Because automorphic representations
are objects defined over $\C$ and $p$-adic Galois representations
are defined over $p$-adic fields, we need a method of passing from
one field to the other. We have already fixed an injection $\Qbar\to\C$;
now we fix once and for all a choice
of algebraic closure $\Qpbar$ of $\Qp$ and,
reluctantly, an isomorphism $\iota:\C\to\Qpbar$
of ``coefficient fields''. Recall that our $L$-groups are defined
over our fixed algebraic closure $\Qbar$ of $\Q$; our fixed inclusion
$\Qbar\to\C$ then induces, via $\iota$, an embedding $\Qbar\to\Qpbar$.
Ideally we should only be fixing an embedding $\Qbar\to\Qpbar$, and
all our constructions should only depend on the restriction of $\iota$
to $\Qbar$, but of course we cannot prove this.
Our choice of $\iota$ does affect matters, in the usual way:
for example, if $f=\sum a_nq^n$ is one of the holomorphic
cuspidal newforms for $\GL_2/\Q$ of level~1 and weight~24 then~13
splits into two prime ideals in the coefficient field of $f$,
and $a_{13}$ is in one of these prime ideals but not the other;
hence $f$ will be ordinary with respect to some choices of $\iota$
but not for others. For notational simplicity we drop $\iota$ from
our notation but our conjectural association of $p$-adic Galois
representations attached to automorphic representations will depend
very much on this choice.

We now state two conjectures on the existence of Galois
representations attached to $L$-algebraic automorphic representations,
the second stronger than the first (in that in specifies a
precise set of places at which the Galois representation is
unramified/crystalline---this is the only difference between the two
conjectures). The first version is
the more useful one when formulating conjectures about functoriality.
Note that both conjectures depend implicitly on our choice of
isomorphism $\iota:\C\to\Qpbar$ which we use to translate complex
parameters to $p$-adic ones. 
\begin{conj}\label{conj:existence of Galois representations}
If $\pi$ is $L$-algebraic, then there is a finite
subset $S$ of the places of $F$, containing all infinite places, all places dividing $p$, and
all places where $\pi$ is ramified, and a continuous Galois representation
$\rho_\pi=\rho_{\pi,\iota}:\Gal(\overline{F}/F)\to\LG(\Qpbar)$, which satisfies
\begin{itemize}
	\item  The composite of $\rho_\pi$ and the natural
          projection $\LG(\Qpbar)\to \Gal(\overline{F}/F)$ is the
          identity map.
	\item If $v\notin S$, then $\rho_\pi|_{W_{F_{v}}}$ is $\Gdual(\Qpbar)$-conjugate to $\iota(r_{\pi_{v}})$.
	\item If $v$ is a finite place dividing $p$ then
          $\rho_\pi|_{\Gal(\overline{F_v}/F_v)}$ is de
          Rham, and the Hodge--Tate cocharacter of this representation
can be explicitly read off from $\pi$ via the recipe in Remark \ref{rem: recipe for HT cocharacter}.
          \item If $v$ is a real place, let $c_{v}\in G_{F}$ denote a complex conjugation at $v$. Then $\rho_{\pi,\iota}(c_{v})$ is $\Gdual(\Qpbar)$-conjugate to the element $\iota(\alpha_v)=\iota(\lambda_{\sigma_v}(i)\lambda_{\tau_v}(i)r_{\pi_v}(j))$
of Lemma~\ref{cxconj}.
\end{itemize}	
\end{conj}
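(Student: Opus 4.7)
The plan is necessarily case-by-case; no uniform construction of $\rho_\pi$ for arbitrary connected reductive $G$ is known, and this conjecture is one of the central open problems of the Langlands programme. I would organise the attack around three classes of $(G,\pi)$ and attempt to bootstrap between them.

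The cleanest case is when $G$ gives rise to a Shimura variety $\mathrm{Sh}_K$ and $\pi_\infty$ is cohomological. Here I would obtain $\rho_\pi$ as a Hecke-isotypic summand of the $\ell$-adic intersection cohomology of $\mathrm{Sh}_K$ with coefficients in the local system attached to the highest weight of $\pi_\infty$ (via archimedean Langlands), and verify each bullet geometrically: the unramified compatibility via an Eichler--Shimura-type congruence relation identifying geometric Frobenius on cohomology with a specific Hecke operator whose Satake parameter matches $\iota(r_{\pi_v})$; the de Rham condition and the Hodge--Tate cocharacter at places above $p$ via Faltings' comparison theorem, reading the cocharacter off the Hodge filtration on de Rham cohomology (which in turn is pinned down by $\pi_\infty$); and the complex-conjugation clause from the description of the real structure on cohomology, matching the element $\alpha_v$ of Lemma~\ref{cxconj}.

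For $\GL_n$ beyond the Shimura case, and for classical groups, the route is functorial transfer: use Arthur's endoscopic classification to transfer $\pi$ to an isobaric automorphic representation of some $\GL_N$, and construct the resulting Galois representation via suitable unitary Shimura varieties (the source of the group $\mathcal{G}_n$ of \cite{cht} discussed above). To recover a representation into $\LG$ rather than into $\mathcal{G}_n$ or $\GL_N$, one composes with the $L$-group map defining the transfer. For $\pi_\infty$ not discrete series, or $\pi$ not directly accessible by these cohomological methods, the fallback is $p$-adic congruence interpolation: realise $\pi$ as a limit of representations of the previous type and patch the resulting $\rho$'s, checking that the unramified, de Rham, and complex-conjugation conditions are closed under the relevant limiting process.

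The main obstacle is blunt: for general $G$ lacking both a useful Shimura variety and an endoscopic transfer to $\GL_n$ (exceptional groups being the paradigmatic case), no strategy is currently available at all, and the conjecture is wide open. Even inside the reach of the above methods, I expect the complex-conjugation clause at real places to be the most delicate point, since it ties the archimedean Langlands parameter directly to the image of a global Galois element and, even for regular $L$-algebraic cuspidal $\pi$ on $\GL_n$, was settled only after considerable subsequent work. The unramified compatibility and the de Rham condition at $p$, by contrast, are structural features of the construction and should reduce, once $\rho_\pi$ exists, to a careful comparison of normalisations along the lines set up in Section~\ref{sec:review and notation}.
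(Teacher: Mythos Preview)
The statement you are addressing is a \emph{conjecture}, not a theorem, and the paper does not contain a proof of it. The authors state it explicitly as Conjecture~\ref{conj:existence of Galois representations} and make clear throughout that it is open in general; indeed, the entire purpose of the paper is to \emph{formulate} this conjecture correctly for arbitrary connected reductive $G$, not to prove it. So there is no ``paper's own proof'' to compare against.

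That said, your proposal is not wrong as a survey of known approaches in special cases, and you correctly flag that the conjecture is open. A few points of comparison with what the paper actually does: the one case the paper \emph{does} prove completely is $G$ a torus (Section~\ref{section:tori}), and your outline omits this case entirely. The paper's argument there is by reduction to $\GL_1$ via base change and Langlands' correspondence for tori, producing a $p$-adic automorphic representation $\pi_p$ and then invoking the $p$-adic analogue of Langlands' surjection $W_F\to{}^LG(\Qpbar)$; this is quite different in flavour from the Shimura-variety and endoscopic-transfer machinery you describe. The paper also checks (Section~\ref{reality checks}) that the conjecture is stable under near-equivalence and parabolic induction, and (Section~\ref{sec:functoriality}) that it is compatible with functorial transfer in the sense that if it holds for $\pi$ on $G$ then it holds for any transfer $\pi'$ on $G'$. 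Finally, in Section~\ref{section:examples} the paper verifies that for certain compact unitary groups the representations constructed in \cite{cht} and \cite{twugk} satisfy the $C$-group version of the conjecture, which is the closest the paper comes to your $\mathcal{G}_n$ discussion.

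In short: your proposal is a reasonable sketch of the landscape of partial results toward the conjecture, but it should not be presented as a proof attempt, and it misses the one case (tori) that the paper itself settles.
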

\begin{conj}\label{conj:existence of Galois representations - strong
    version with crystalline etc}
Assume that $\pi$ is $L$-algebraic. Let $S$ be the set of the places of $F$ consisting of all infinite places, all places dividing $p$, and
all places where $\pi$ is ramified. Then there is a continuous Galois representation $\rho_{\pi,\iota}:\Gal(\overline{F}/F)\to\LG(\Qpbar)$, which satisfies
\begin{itemize}
	\item  The composite of $\rho_{\pi,\iota}$ and the natural
          projection $\LG(\Qpbar)\to \Gal(\overline{F}/F)$ is the
          identity map.
	\item If $v\notin S$, then $\rho_{\pi,\iota}|_{W_{F_{v}}}$ is $\Gdual(\Qpbar)$-conjugate to $\iota(r_{\pi_{v}})$.
	\item If $v$ is a finite place dividing $p$ then
          $\rho_{\pi,\iota}|_{\Gal(\overline{F_v}/F_v)}$ is de
          Rham, and the Hodge--Tate cocharacter associated to this
representation is given by the recipe in  Remark \ref{rem: recipe for HT cocharacter}.
Furthermore, if $\pi_{v}$ is unramified then
          $\rho_{\pi,\iota}|_{\Gal(\overline{F_v}/F_v)}$ is
          crystalline.
	\item If $v$ is a real place, let $c_{v}\in G_{F}$ denote a
          complex conjugation at $v$. Then $\rho_{\pi,\iota}(c_{v})$
          is $\Gdual(\Qpbar)$-conjugate to the element $\iota(\alpha_v)=\iota(\lambda_{\sigma_v}(i)\lambda_{\tau_v}(i)r_{\pi_v}(j)))$ of
Lemma~\ref{cxconj}.
\end{itemize}	
\end{conj}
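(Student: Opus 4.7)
The plan is to attack the conjecture by reducing to known cases via functoriality and by explicit geometric construction in favorable settings, since no single mechanism is expected to cover all connected reductive $G$. First I would separate the two aspects of the conjecture: the bulk of the work is to produce $\rho_{\pi,\iota}$ satisfying the conditions at almost all places (this is the ``weak version''); upgrading to the strong version requires a local-global compatibility statement at $p$ and at the finitely many places where $\pi$ is unramified but $\rho_{\pi,\iota}$ could a priori be ramified.

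Second, I would use the twisting theory of Section~\ref{section:twisting} to pass, at the cost of replacing $G$ by the covering group $\widetilde{G}$, to a setting where one can compare $L$-algebraic and $C$-algebraic representations. The $C$-algebraic representations are the ones that naturally contribute to the cohomology of arithmetic locally symmetric spaces, so the strategy for cohomological $\pi$ is to realise $\rho_{\pi,\iota}$ inside the $\iota$-isotypic part of the \'etale cohomology of a Shimura variety (when $G$ admits a Shimura datum) or an arithmetic manifold. The match with $\iota(r_{\pi_v})$ away from $S$ would follow from the Langlands--Kottwitz method comparing Arthur--Selberg and Grothendieck--Lefschetz traces, and the Hodge--Tate recipe would follow from $p$-adic comparison theorems combined with a Matsushima-type computation of the Hodge filtration in terms of $(\gg,K)$-cohomology. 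Crystallinity at unramified $v\mid p$ would come from the fact that the cohomology of a smooth proper variety with good reduction is crystalline, together with the recognition that hyperspecial level at $v$ gives good reduction of the relevant integral model. The assertion about $\rho_{\pi,\iota}(c_v)$ would follow from comparing the action of complex conjugation on Betti cohomology with the archimedean Langlands parameter.

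Third, for groups $G$ outside the Shimura class, the plan is to reduce to a classical group via functoriality (using Arthur's endoscopic classification to transfer $\pi$ to an automorphic representation on a unitary, symplectic or orthogonal group, constructing the Galois representation there, and then descending via the compatibility of functoriality on both the automorphic and Galois sides). For non-cohomological $\pi_\infty$ one would attempt $p$-adic interpolation on eigenvarieties, deforming from the cohomological locus where Galois representations already exist, and then use rigidity at $\infty$ to recover the correct Hodge--Tate weights and complex conjugation recipe in the limit.

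The main obstacle is that almost every input above is itself deeply open in the generality required: functoriality is essentially unknown outside a small list of classical transfers; Shimura varieties exist only for a restricted class of groups satisfying the Deligne axioms; and even for $\GL_2/\Q$ the conjecture is open for non-cohomological Maass forms. Even granting the existence of $\rho_{\pi,\iota}$, matching with $\iota(r_{\pi_v})$ at ramified finite $v$ (local-global compatibility) is in general a separate deep problem, and the complex-conjugation clause has only recently been established in the self-dual $\GL_n$ setting by the work surrounding \cite{cht}. Thus I expect the realistic near-term target is not a full proof but a web of reductions: the conjecture for general $G$ follows from (a) a suitable form of Langlands functoriality plus (b) the conjecture for $\GL_n$ over CM and totally real fields in the full $L$-algebraic (rather than just regular $C$-algebraic) range.
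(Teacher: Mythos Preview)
The statement you are attempting to prove is a \emph{conjecture}, not a theorem: the paper does not prove it in general, and indeed explicitly presents it as open. The only case the paper actually establishes is that of tori (Section~\ref{section:tori}), where the argument is entirely different from your outline: one reduces to $\GL_1$ by base change to a splitting field, constructs a ``$p$-adic automorphic representation'' $\pi_p$ by the standard twisting procedure for algebraic Hecke characters, and then invokes Langlands' global correspondence for tori (Theorem~2(b) of \cite{MR1610871}) to produce $\rho_\pi$, checking the de Rham/crystalline and complex-conjugation clauses by explicit local calculation. No Shimura varieties, functoriality to classical groups, or $p$-adic interpolation enter.

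Your proposal is not a proof but a sketch of a research program, and you yourself acknowledge this in your final paragraph. The ingredients you list (Langlands--Kottwitz, comparison theorems, Arthur's endoscopic classification, eigenvariety interpolation) are indeed the tools by which many known special cases have been established, and your closing reduction---general $G$ via functoriality plus the conjecture for $\GL_n$ over CM fields---is the standard expected shape of an eventual proof. But as a proof of the conjecture it has a fundamental gap: each of (a) general functoriality, (b) Galois representations for non-regular $L$-algebraic $\pi$ on $\GL_n$, and (c) full local-global compatibility are themselves open in the required generality, so the proposal does not terminate in known results. The paper's contribution is to \emph{formulate} the conjecture precisely and verify it for tori, not to prove it; your task here was therefore ill-posed from the start.
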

\begin{rem}\label{rem: recipe for HT cocharacter} The recipe for the Hodge--Tate cocharacter in the conjectures
above is as follows.
Say $j:\Fbar\to\Qbar$ is an isomorphism of fields. We have fixed $\Qbar\to\C$
and (via $\iota$) $\Qbar\to\Qpbar$, so $j$ induces $\Fbar\to\Qpbar$
and hence gives us a place $v|p$, an algebraic closure $\Fvbar$
of $\Fv$ equipped with an identification $j:\Fvbar\to\Qpbar$,
and a map $\Fbar\to\Fvbar$.
Similarly $j$ induces $\Fbar\to\C$ and hence
gives us an infinite place $w$ of $F$, an algebraic closure $\Fwbar$
of $\Fw$ equipped with an identification $\sigma:\Fwbar\to\C$,
and a map $\Fbar\to\Fwbar$.
Now, attached to $\pi_w$ and $\sigma$
we have constructed an element $\lambda_\sigma\in X^*(T)/W$. We need
to be careful now (indeed we thank Florian Herzig for pointing out
that we were not careful enough in the published version of this
paper) -- $T$ is a torus of $G$ defined over $\Fwbar$. Using
$\Fbar\to\Fwbar$ we can descend $\lambda_\sigma$ to an element
of $X^*(T_{\Fbar})/W$, and then we can push it out via $\Fbar\to\Fvbar$
to an element of $X^*(T_{\Fvbar})/W$. Our conjecture
is that this element $\lambda_\sigma$ is the Hodge--Tate cocharacter $\nu_j$
associated to the embedding $\Fvbar\to\Qpbar$.
\end{rem}
\begin{rem}
	The representation $\rho_{\pi,\iota}$ is not necessarily unique up to $\Gdual(\Qpbar)$-conjugation. One rather artificial reason for this is that if
$\pi$ is a non-isobaric $L$-algebraic automorphic representation of $\GL_2/\Q$
such that $\pi_v$ is 1-dimensional for almost all $v$, then there
are often many non-semisimple 2-dimensional Galois
representations that one can associate to $\pi$ (as well as a semisimple
one). But there are other
more subtle reasons too. For example if $G$ is a torus over $F$ then the
admissible Galois representations into the $L$-group of $G$ are parametrised by
$H^1(F,\widehat{G})$ (with the Galois group acting on $\widehat{G}$ via
the action used to form the $L$-group), and there may be non-zero elements
of this group which restrict to zero in $H^1(F_v,\widehat{G})$ for all places~$v$
of~$F$. If this happens then there is more than one Galois
representation that can be associated to the trivial 1-dimensional
automorphic representation of $G$. We are grateful to Hendrik Lenstra
and Bart de Smit for showing us the following explicit example of a rank~3 torus
over $\Q$ where this phenomenon occurs. If $\Gamma$ is the group
$(\Z/2\Z)^2$ and $Q$ is the quaternion group of order~8 then $Q$
gives a non-zero element of $H^2(\Gamma,\pm1)$ whose image in
$H^2(\Gamma,\C^\times)$ is non-zero but whose restriction to $H^2(D,\C^\times)$
is zero for any cyclic subgroup $D$ of $\Gamma$ (consider the corresponding
extension of $\Gamma$ by $\C^\times$ to see these facts). We now
``dimension shift''. The rank four
torus $\Z[\Gamma]\otimes_{\Z}\C^\times$ has no cohomology in degree
greater than zero and has $\C^\times$ as a subgroup, so the quotient
group $T$ is a complex torus with an action of $\Gamma$ and with
the property that there's an element of $H^1(\Gamma,T)$ whose restriction
to any cyclic subgroup is zero. Finally, $\Gamma$ is isomorphic
to $\Gal(\Q(\sqrt{13},\sqrt{17})/\Q)$ (a non-cyclic group all of whose
decomposition groups are cyclic) and $T$ with its Galois action
can be realised as the complex points of the dual group of a torus
over $\Q$, giving us our example: there is more than one Galois
representation associated to the trivial 1-dimensional representation
of this torus.
\end{rem}

\begin{rem}\label{swapping geometric and arithmetic frobenius via an
    involution}We have normalised local class field theory so that
  geometric Frobenius elements correspond to uniformisers, and defined
our Weil groups accordingly. Had we normalised things the other way
(associating arithmetic Frobenius to uniformisers) then the natural
thing to do when formulating our unramified local Langlands dictionary
would have been to use an arithmetic Frobenius as a generator of our
Galois group. In particular our unramified
local Langlands dictionary at good finite places would
be changed by a non-trivial involution. Had we made this choice initially,
conjectures~\ref{conj:existence of Galois
    representations} and~\ref{conj:existence of Galois representations
    - strong version with crystalline etc} need to be modified:
one needs to change the Hodge--Tate cocharacter
$\mu$ to $-\mu$. However these new conjectures are equivalent to the
 conjectures as stated,
  because the required Galois representation predicted by the new conjecture
may be obtained directly
  from $\rho_{\pi}$ by applying the Chevalley involution of $\LG$ (the
Chevalley involution of $\Gdual$ extends to $\LG$ and induces the identity
map on the Galois group), or indirectly as $\rho_{\tilde{\pi}}$ where
$\tilde{\pi}$ is
  the contragredient of $\pi$. We omit the formal proof that these
  constructions do the job---in fact, although the arguments at the
finite places are not hard, we confess that we were not able
to find a precise published reference for the statements at infinity that we
need. The point is that we need to know how the local Langlands correspondence
for real and complex reductive groups behaves under taking contragredients. The
involution on the $\pi$ side induced by contragredient
corresponds on the Galois side to the involution
on the local Weil representations induced by an involution of the Weil group
of the reals/complexes sending $z\in\kbar^\times\cong\C^\times$ to $z^{-1}$.
It also corresponds to the involution on the Weil representations
induced by the Chevalley involution. Both these facts seem to be well-known
to the experts but the proof seems not to be in the literature.
Jeff Adams and David Vogan inform us that they are working on
a manuscript called ``The Contragredient'' in which these issues
will be addressed.
\end{rem}

\subsection{Example: the groups $\GL_2/\Q$ and $\PGL_2/\Q$}\label{gl2ex}

The following example illustrates the differences between the $C$-\ and
$L$-\ notions in two situations, one where things can be ``fixed by twisting''
and one where they cannot. The proofs of the assertions made here
only involve standard unravelling of definitions and we shall omit
them.

Let $\A$ denote the adeles of $\Q$. For $N$ a positive
integer, let $K_0(N)$ denote the subgroup of $\GL_2(\widehat{\Z})$
consisting of matrices which are upper triangular modulo~$N$.
If $\GL_2^+(\R)$ denotes the matrices in $\GL_2(\R)$ with positive
determinant then $\GL_2(\A)=\GL_2(\Q)K_0(N)\GL_2^+(\R)$.
Now let $f$ be a modular form of weight $k\geq2$
which is a normalised cuspidal eigenform for the subgroup $\Gamma_0(N)$
of $\SL_2(\Z)$, and let $s$ denote a complex number. We think
of $f$ as a function on the upper half plane.
Associated to $f$ and $s$ we define a function $\phi_s$ on $\GL_2(\A)$
by writing an element of $\GL_2(\A)$ as $\gamma\kappa u$
with $\gamma\in\GL_2(\Q)$, $\kappa\in K_0(N)$ and
$u=\smallmat{a}{b}{c}{d}\in\GL_2^+(\R)$, and defining
$\phi_s(\gamma\kappa u)=(\det u)^{k-1+s}(ci+d)^{-k}f((ai+b)/(ci+d))$.
This function is well-defined and is a cuspidal automorphic form,
which generates an automorphic representation $\pi_s$ of $\GL_2(\A)$.
The element $s$ is just a twisting factor; if $s$ is a generic
complex number then $\pi_s$ will not be algebraic or arithmetic
for either of the ``$C$'' or ``$L$'' possibilities above. 

First we consider the arithmetic side of the story.
For $p$ a prime not dividing $N$, let $a_p$ is the coefficient
of $q^p$ in the $q$-expansion of $f$. It is well-known
that the subfield of $\C$ generated by the $a_p$ is a number field $E$.
An elementary but long explicit calculation shows the following.
If $\pi_{s,p}$ denotes the local component of $\pi_s$ at $p$,
then $\pi_{s,p}$ has a non-zero invariant vector under the
group $\GL_2(\Z_p)$ and the action of the Hecke operators $T_p$
and $S_p$ on this 1-dimensional space are via the complex
numbers $p^{2-k-s}a_p$ and $p^{2-k-2s}$. The Satake parameter
associated to $\pi_{s,p}$ is the semisimple conjugacy class
of $\GL_2(\C)$ consisting of the semisimple elements with
characteristic polynomial $X^2-a_pp^{3/2-k-s}X+p^{2-k-2s}$.
Hence $\pi_s$ is $L$-arithmetic if $s\in \frac{1}{2}+\Z$.
In fact one can go further.
By the six exponentials theorem of transcendental number theory
one sees easily that a complex number $c$ with the property
that $p^c$ is algebraic for at least three prime numbers~$p$
must be rational. Hence if $\pi_s$ is $L$-arithmetic
then $s$ is rational and (because a number field is only ramified
at finitely many primes and hence cannot contain
the $t$th root of infinitely many prime numbers for any $t>1$)
one can furthermore deduce that $2s\in\Z$. Next one observes
that $a_p$ must be non-zero for infinitely many primes~$p\nmid N$
(because one can apply the Cebotarev density theorem
to the mod $\ell>2$ Galois representation associated
to~$f$ and to the identity matrix) and deduce (again because a number
field cannot contain the square root of infinitely many primes)
that $\pi_s$ is $L$-arithmetic iff $s\in\frac{1}{2}+\Z$. Now
by Proposition~\ref{prop:twistinglarithtocarith}
(whose proof uses nothing that we haven't
already established) we see that $\pi_s$ is $C$-arithmetic iff $s\in\Z$.

We now consider the algebraic side of things.
If $\pi_{s,\infty}$ denotes the local component of $\pi_s$ at
infinity and we choose the Cartan subalgebra $\gh^{\C}$ of $\gl_2(\C)$
spanned by $H:=\smallmat{1}{0}{0}{-1}$ and $Z:=\smallmat{1}{0}{0}{1}$
then the infinitesimal character of $\pi_{s,\infty}$ (thought of
as a Weil group orbit in $\Hom_{\C}(\gh^{\C},\C)$) sends $H$ to $\pm(k-1)$
and $Z$ to $2s+k-2$. The characters of the torus in $\GL_2(\R)$
give rise to the lattice $X^*(T)$ in $\Hom(\gh^{\C},\C)$ consisting 
of linear maps that send $H$ and $Z$ to integers of the same parity.
Hence $\pi_s$ is $C$-algebraic iff $s\in\Z$ and $L$-algebraic iff
$s\in \frac{1}{2}+\Z$. In particular $\pi_s$ is $L$-algebraic iff
it is $L$-arithmetic, and $\pi_s$ is $C$-algebraic iff it is $C$-arithmetic.

We now play the same game for Maass forms. If $f$ (a real analytic
function on the upper half plane) is a cuspidal Maass form
of level $\Gamma_0(N)$ which is an eigenform for the Hecke operators,
and $s\in\C$ then one can define a function $\phi_s$ on $\GL_2(\A)$ by
writing an element of $\GL_2(\A)$ as $\gamma\kappa u$ as above,
writing $u=\smallmat{a}{b}{c}{d}$, 
and defining $\phi_s(\gamma\kappa u)=\det(u)^s f((ai+b)/(ci+d))$.
If we now assume that $f$ is the Maass form associated by Langlands and
Tunnell to a Galois representation $\rho:\GQ\to\SL_2(\C)$ with solvable image,
and if $p\nmid N$ is prime and $a_p=\tr(\rho(\Frob_p))$, then the
$a_p$ generate a number field $E$ (an abelian extension of $\Q$ in this
case) and a similar explicit calculation, which again we omit, shows
that $\pi_s$ is $L$-arithmetic iff $\pi_s$ is $L$-algebraic
iff $s\in\Z$, and that $\pi_s$ is
$C$-arithmetic iff $\pi_s$ is $C$-algebraic
iff $s\in\frac{1}{2}+\Z$.

Note in particular that the answer in the Maass form
case is different to the holomorphic case in the sense
that $s\in\Z$ corresponded to the $C$-side in the holomorphic
case and the $L$-side in the Maass form case.

An automorphic representation for $\PGL_2/\Q$ is just an automorphic
representation for $\GL_2/\Q$ with trivial central character. One checks
that the $\pi_s$ corresponding to the holomorphic modular form has trivial
central character iff $s=1-\frac{k}{2}$ (this is because the form
was assumed to have trivial Dirichlet character) and, again because
the form has trivial character, $k$ must be even so in particular the
$\pi_s$ which descends to $\PGL_2/\Q$ is $C$-algebraic and $C$-arithmetic.
However, the $\pi_s$ corresponding to the Maass form with trivial
character has trivial central character iff $s=0$, which is $L$-algebraic
and $L$-arithmetic. Hence, when applied to the group $\PGL_2/\Q$,
our conjecture above says that there should be a Galois representation
to $\SL_2(\Qlbar)$ associated to the Maass form but it says nothing
about the holomorphic form. However, the holomorphic form is clearly
algebraic in some sense and indeed there is a Galois representation
associated to the holomorphic form---namely the Tate module of the
elliptic curve. Note however that the determinant of the Tate module
of an elliptic curve is the cyclotomic character, which is not the
square of any 1-dimensional Galois representation (complex conjugation
would have to map to an element of order~4) and hence no twist of
the Tate module of an elliptic curve can take values in $\SL_2(\Qlbar)$.
This explains why we have thus far restricted
to $L$-algebraic representations for our
conjecture attaching Galois representations to automorphic representations.

Finally, we note that it is easy to check that the automorphic
representations corresponding to Hilbert modular forms with
weights that are \emph{not} all congruent modulo~2, are neither
$C$-algebraic nor $L$-algebraic
(cf. pp.91--92 of \cite{MR1044819}).

\subsection{Why $C$-algebraic?}

Our conjecture above only attempts to associate Galois representations
to $L$-algebraic automorphic representations. So why consider
$C$-algebraic representations at all? For $\GL_n$ the issue is only
one of twisting: $\pi$ is $L$-algebraic iff $\pi.|\det(.)|^{(n-1)/2}$ is
$C$-algebraic.  Furthermore, for groups such as $\SL_2$ in which half
the sum of the positive roots is in $X^*(T)$, the notions of
$L$-algebraic and $C$-algebraic coincide.  On the other hand, as the
previous example of $\PGL_2/\Q$ attempted to illustrate, one does not
always have this luxury of being able to pass easily between
$L$-algebraic and $C$-algebraic representations for a given group
$G$. Furthermore a lot of naturally occurring representations are
$C$-algebraic: for example any cohomological automorphic
representation will always be $C$-algebraic (see Lemma
\ref{lem:cohomogicalimpliesCalgebraic} below) and, as the case of
$\PGL_2/\Q$ illustrated, there may be natural candidates for Galois
representations associated to these automorphic representations, but
they may not take values in the $L$-group of $G$! In fact, essentially all known
examples of Galois representations attached to automorphic
representations ultimately come from the cohomology of Shimura
varieties (although in some cases the constructions also use
congruence arguments), and this cohomology is naturally decomposed in
terms of cohomological automorphic representations. Much of the rest
of this paper is devoted to examining the relationship between
$C$-algebraic and $L$-algebraic in greater detail, and
defining a ``$C$-group'', which should conjecturally receive
the Galois representations attached to $C$-algebraic automorphic
representations.

\section{The case of tori.}\label{section:tori}\subsection{}

In this section we prove
conjectures~\ref{conj:LarithmeticLalgebraic},
~\ref{conj:CarithmeticCalgebraic}, \ref{conj:existence of Galois representations}
and~\ref{conj:existence of Galois representations - strong     version with crystalline etc} when $G$ is a torus
over a number field~$F$.
That we can do this should not be considered surprising. Indeed,
if $G=\GL_1$ then the
results have been known for almost 30 years, and the general
case can be reduced to the $\GL_1$ case via base change and
a $p$-adic version of results of Langlands on
the local and global Langlands correspondence for tori.
Unfortunately we have not been able to find a reference
which does what we want so we include some of the details here.

First note that if $G$ is a torus then half the sum of the positive
roots is zero, so the Satake isomorphism
preserves $\Q$-structures and hence the notions of $C$-arithmetic
and $L$-arithmetic coincide and we can use the phrase ``arithmetic''
to denote either of these notions. Furthermore, again
because half the sum of the positive
roots is zero, the notions of $C$-algebraic and $L$-algebraic
also coincide, and we can use the phrase ``algebraic'' to mean
either of these two notions (and in the case $G=\GL_1/F$ this
coincides with the classical definition, and with Weil's notion of
being of type $(A_0)$).

Recall that to give a torus $G/F$
is to give its character group, which (after choosing an $\overline{F}$)
is a finite free $\Z$-module
equipped with a continuous action of $\Gal(\overline{F}/F)$. Let
$K\subset\overline{F}$
denote a finite Galois extension of~$F$ which splits $G$; then this action
of $\Gal(\overline{F}/F)$ factors through $\Gal(K/F)$.
An automorphic representation of $G/F$
is just a continuous group homomorphism $G(F)\backslash G(\A_F)\to\C^\times$.

Let $BC$ denote the usual base change map from automorphic representations
of $G/F$ to automorphic representations of $G/K$, induced
by the norm map $N:G(\A_K)\to G(\A_F)$. 

\begin{lemma}\label{algBC} If $\pi$ is an automorphic representation
of $G/F$ then $\pi$ is algebraic if and only if $BC(\pi)$ is.
\end{lemma}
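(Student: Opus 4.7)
The plan is to reduce the statement to the assertion that algebraicity depends only on the restriction of the Langlands parameter to $\overline{F_v}^\times \subset W_{F_v}$, and that this restriction is unchanged under passage to $BC(\pi)$ at any infinite place $w$ above $v$.

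First I would recall that for a torus $G$, the local Langlands parameter $\phi_{\pi_v}: W_{F_v}\to \LG(\C)$ at an infinite place $v$ restricts on $\overline{F_v}^\times$ to $z\mapsto \sigma(z)^{\lambda_\sigma}\tau(z)^{\lambda_\tau}$ for a well-defined pair $(\lambda_\sigma,\lambda_\tau)\in (X^*(T)\otimes\C)^2$; there is no Weyl group ambiguity since $G$ is a torus. By definition $\pi_v$ is algebraic precisely when $\lambda_\sigma\in X^*(T)$, and this condition is independent of the identification $\overline{F_v}\cong\C$.

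Next I would invoke the compatibility of the local and global Langlands correspondences for tori with base change (essentially due to Langlands in his paper on representations of abelian algebraic groups): under these correspondences, $BC(\pi)=\pi\circ N_{K/F}$ matches, on the Galois side, the restriction $\phi_\pi|_{W_K}$. Hence at each infinite place $w$ of $K$ lying over $v$, the parameter of $BC(\pi)_w$ is the restriction of $\phi_{\pi_v}$ to the subgroup $W_{K_w}\subset W_{F_v}$.

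Now the inclusion $W_{K_w}\subset W_{F_v}$ contains the canonical identification $\overline{K_w}^\times=\overline{F_v}^\times$ (in every case: $w$ real above $v$ real gives equality of Weil groups; $w$ complex above $v$ complex likewise; $w$ complex above $v$ real cuts off only the element $j$, which is outside $\overline{F_v}^\times$). Hence the pair $(\lambda_\sigma,\lambda_\tau)$ attached to $BC(\pi)_w$ equals that attached to $\pi_v$. Since $T$ already splits over $K$, the character lattice $X^*(T)$ is the same whether computed over $F$ or over $K$, so the condition $\lambda_\sigma\in X^*(T)$ is literally identical for $\pi_v$ and for $BC(\pi)_w$. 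As every infinite place $v$ of $F$ has at least one infinite place $w$ of $K$ above it, and conversely every infinite $w$ lies above a unique infinite $v$, the equivalence follows. The only (minor) obstacle is pinning down a clean reference for the base change compatibility of local Langlands for tori, although as the authors themselves note above, this is folklore and can in any case be checked directly from the Weil group cocycle description of the correspondence.
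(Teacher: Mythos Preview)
Your proof is correct and follows essentially the same approach as the paper's: both reduce to the local observation that algebraicity of $r_{\pi_v}$ depends only on its restriction to $\overline{F_v}^\times$, and that this restriction is unchanged upon passing to $W_{K_w}\subset W_{F_v}$. Your version is simply more explicit, spelling out the compatibility of local Langlands for tori with base change and the case analysis on real/complex places, whereas the paper compresses all of this into one sentence.
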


\begin{proof} This is a local statement, and if we translate
it over to a statement about representations of Weil groups
then it says that if $k$ is an algebraic closure of $\R$
then $r:W_{\R}\to{}^LG(\C)$ is algebraic iff its restriction
to $W_k$ is, which is clear because our definition of algebraicity
of $r$ only depended on the restriction of~$r$ to $W_k$.
\end{proof}

Now let $T$ denote a torus over a local field~$k$, and assume $T$
splits over an unramified extension of~$k$. The topological
group $T(k)$ has a unique maximal compact subgroup $U$. Let $\chi$
be a continuous group homomorphism $T(k)\to\C^\times$ with
$U$ in its kernel. Note that $U$ is hyperspecial and hence $\chi$
is unramified.

\begin{lemma} For $E$ a subfield of $\C$, the following are equivalent:

(i) $\chi$ is defined over $E$.

(ii) The Satake parameter of $\chi$ is defined over $E$.

(iii) The image of $\chi$ is contained in~$E^\times$.
\end{lemma}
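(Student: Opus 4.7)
The plan is to observe that all three conditions unwind to the same statement, namely that $\chi(T(k))\subseteq E$, exploiting the fact that $T$ is a torus so the Satake isomorphism is especially simple.

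First I would record the following simplification. Since $T$ is a torus, it has no roots, so half the sum of the positive roots is zero and the $\delta^{1/2}$-twist appearing in the Satake isomorphism is trivial. Moreover the Weyl group $W_d$ is trivial. Combining this with the isomorphism $T_d(k)/T_d(\calO)\isoto T(k)/{}^oT$ recalled in~\S\ref{satakeparams} and the fact that $U={}^oT$ is the unique maximal compact, the Satake isomorphism becomes the canonical $\Q$-rational identification
\[
H_\Q(T(k),U)\;\cong\;\Q[T(k)/U]\;\cong\;\Q[X_*(T_d)],
\]
sending the characteristic function of $Ut$ to the class $[t]$. In particular the two $\Q$-structures on $H_\C(T(k),U)$ appearing in Definition~\ref{definedoverEdef} coincide.

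For the equivalence of (ii) and (iii): under the identification above, the algebra map $\Q[X_*(T_d)]\to\C$ of Definition~\ref{definedoverEdef}(ii) is simply $[t]\mapsto\chi(t)$, whose image is the $\Q$-subalgebra of $\C$ generated by $\chi(T(k))$. Since $E$ is a subfield of $\C$, this subalgebra is contained in $E$ if and only if $\chi(T(k))\subseteq E$, and since $\chi$ takes values in $\C^\times$ this is the same as $\chi(T(k))\subseteq E^\times$.

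For the equivalence of (i) and (iii): because $\chi$ is one-dimensional, Lemma~\ref{definedoverE}(ii) says $\chi$ is defined over $E$ iff there is a $T(k)$-stable $E$-line $V_0\subset\C$ with $V_0\otimes_E\C=\C$. Choosing any nonzero $v_0\in V_0$, the $T(k)$-stability of $V_0=Ev_0$ forces $\chi(t)v_0\in Ev_0$, hence $\chi(T(k))\subseteq E^\times$; conversely, if (iii) holds then $V_0:=E\subset\C$ gives such an $E$-structure. There is no real obstacle here — the lemma is a pure unwinding of definitions, facilitated by the vanishing of $\delta$ and $W_d$ for a torus — and it will serve as the base case for extending Conjectures~\ref{conj:LarithmeticLalgebraic} and~\ref{conj:CarithmeticCalgebraic} to arbitrary tori via base change to a splitting field, where the real work begins.
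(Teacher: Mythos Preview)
Your proof is correct and follows essentially the same approach as the paper: both exploit that for a torus the Satake isomorphism is the identity on $\C[T(k)/U]$, so the two $\Q$-structures coincide, and the Hecke-algebra condition unwinds to $\chi(T(k))\subseteq E^\times$. The only cosmetic difference is that for (i)$\Leftrightarrow$(iii) you invoke Lemma~\ref{definedoverE}(ii) (existence of an $E$-line), whereas the paper argues directly from Definition~\ref{definedoverEdef}(i); since you have already noted the two $\Q$-structures agree, your (ii)$\Leftrightarrow$(iii) already gives (i)$\Leftrightarrow$(iii) and the separate argument via Lemma~\ref{definedoverE} is a slight redundancy rather than a genuinely different route.
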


\begin{proof} The Satake isomorphism in this situation
is simply the identity isomorphism
$\C[T(k)/U]=\C[T(k)/U]$, which induces the identity isomorphism
$\Q[T(k)/U]=\Q[T(k)/U]$, so (i) and (ii) are equivalent. The equivalence
of (i) and (iii) follows from the statement that $\chi:T(k)/U\to\C^\times$
is $E^\times$-valued if and only if the induced ring homomorphism
$\Q[T(k)/U]\to\C$ is $E$-valued.
\end{proof}

If $k_1/k$ is a finite extension of local fields and if $T/k$ is a torus
then we also use the notation $\BC$ to denote the map
$\Hom(T(k),\C^\times)\to\Hom(T(k_1),\C^\times)$ induced by the
norm map $N:T(k_1)\to T(k)$. Now suppose again that $T$ is an unramified
torus over $k$ and $\chi:T(k)\to\C^\times$ is an unramified character.

\begin{corollary} If $\chi$ is defined over $E$ and if $k_1/k$ is a finite
extension, then $\BC(\chi)$ is defined over $E$.
\end{corollary}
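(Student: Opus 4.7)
The plan is to reduce the corollary to the third characterisation of "defined over $E$" provided by the previous lemma, namely that the image of $\chi$ lies in $E^{\times}$, and then observe that this condition is transparently preserved under base change.

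First I would verify that the previous lemma applies to $\BC(\chi)$ as a character of $T(k_1)$. For this I need to know that $T_{k_1}$ is unramified and that $\BC(\chi)$ is unramified. If $L/k$ is a finite unramified splitting field of $T$, then $L \cdot k_1 / k_1$ is again unramified (the residue field of $L$ being already separable over that of $k$), so $T$ splits over an unramified extension of $k_1$; since $T$ is a torus it is automatically quasi-split, hence unramified over $k_1$. Writing $U$ and $U_1$ for the unique maximal compact subgroups of $T(k)$ and $T(k_1)$, the norm map $N \colon T(k_1) \to T(k)$ sends $U_1$ into $U$ (one sees this for instance by reducing to the split case using functoriality of the norm and the fact that $T$ becomes a product of copies of $\Gm$ over the unramified splitting field), and so $\BC(\chi) = \chi \circ N$ is trivial on $U_1$, hence unramified.

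Given this, the previous lemma yields: $\chi$ is defined over $E$ if and only if the image of $\chi$ lies in $E^{\times}$, and analogously for $\BC(\chi)$. Since $\BC(\chi)(t_1) = \chi(N(t_1))$, the image of $\BC(\chi)$ is contained in the image of $\chi$, which is contained in $E^{\times}$ by hypothesis. Applying the equivalence in the other direction then shows that $\BC(\chi)$ is defined over $E$.

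The only non-formal point here is the verification that $T_{k_1}$ is still unramified and that $\BC(\chi)$ is still unramified, which is the "main obstacle" in the sense that it is the one step where something has to be checked rather than merely unwound; everything else is an immediate transposition through the three equivalent conditions of the preceding lemma.
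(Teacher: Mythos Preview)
Your proof is correct and follows the same route as the paper: the paper's entire argument is the single line ``The image of $\BC(\chi)$ is contained in the image of $\chi$,'' i.e.\ exactly your use of condition~(iii) from the preceding lemma. The verifications you add---that $T$ remains unramified over $k_1$ and that $\BC(\chi)$ kills $U_1$---are details the paper leaves implicit, so you have simply been more careful than the authors without diverging from their approach.
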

\begin{proof} The image of $\BC(\chi)$ is contained in the image of $\chi$.
\end{proof}

We now go back to the global situation. Let $\pi$ denote an automorphic
representation of $G/F$, with $G$ a torus, and let $\BC(\pi)$ denote
its base change to $G/K$, where $K$ is a finite Galois extension of $F$
which splits $G$.

\begin{corollary}\label{cor: base change of arithmetic is arrithmetic} If $\pi$ is arithmetic then $\BC(\pi)$ is arithmetic.
\end{corollary}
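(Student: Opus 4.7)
The plan is to reduce the global assertion to the local one already established (the corollary immediately preceding this statement) by working place-by-place.

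First, I would unpack the hypothesis that $\pi$ is arithmetic: by Definition~\ref{defn:L-arithmetic} (equivalently Definition~\ref{defn:C-arithmetic}, since the two notions coincide for tori) there exist a finite set $S$ of places of $F$ containing all infinite places and all places where $\pi$ is ramified, and a number field $E \subset \C$, such that for every $v \notin S$ the local component $\pi_v$ is defined over $E$. Enlarging $S$ if necessary, I would also assume that $S$ contains all finite places of $F$ which ramify in $K$, so that for $v \notin S$ the extensions $K_w/F_v$ (for $w \mid v$) are unramified and $G$ is split, hence unramified, over both $F_v$ and $K_w$.

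Next, let $S'$ be the set of places of $K$ lying over places in $S$, together with all infinite places of~$K$; this set is finite. For any place $w$ of $K$ with $w \notin S'$, let $v$ be the place of $F$ below~$w$; then $v \notin S$. The local base change $\BC(\pi)_w$ is, by construction, the character $\pi_v \circ N_{K_w/F_v}$ on $G(K_w)$, where $N_{K_w/F_v} : G(K_w) \to G(F_v)$ is the norm. Since $N_{K_w/F_v}$ carries the maximal compact subgroup of $G(K_w)$ into the maximal compact subgroup of $G(F_v)$, the character $\BC(\pi)_w$ is unramified. The preceding local corollary (on base change of unramified characters of tori) then immediately gives that $\BC(\pi)_w$ is defined over the same field~$E$, since its image is contained in the image of $\pi_v$.

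Combining these, the finite set $S'$ and the number field $E$ witness that $\BC(\pi)$ is arithmetic. There is no serious obstacle here; the statement is essentially a bookkeeping exercise, with the only mild point being the need to enlarge $S$ to control ramification in $K/F$ so that the local unramified theory applies at every place outside $S'$.
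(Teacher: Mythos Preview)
Your proposal is correct and follows exactly the paper's approach: the paper's proof is the single line ``Immediate from the previous corollary,'' and you have spelled out precisely that reduction place-by-place. One minor quibble: you write that ``$G$ is split, hence unramified, over both $F_v$ and $K_w$,'' but $G$ need not be split over $F_v$ --- it is only unramified there (it splits over the unramified extension $K_w$); this does not affect the argument, and in fact the local corollary does not even require $K_w/F_v$ to be unramified, so your enlargement of $S$ is harmless but unnecessary.
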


\begin{proof} Immediate from the previous corollary.
\end{proof}

\begin{thm}\label{thm: alg and arith the same for split tori} If $G$ is a split torus over a number field, then
the notions of arithmetic and algebraic coincide.
\end{thm}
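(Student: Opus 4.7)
The plan is to reduce the question to the case $G=\GL_1$, where the theorem has already been cited as known. Any split torus over~$F$ is $F$-isomorphic to $\GL_1^n$ for some $n\geq 0$, so an automorphic representation $\pi$ of $G/F$ is the same data as a continuous character $\GL_1^n(F)\backslash\GL_1^n(\A_F)\to\C^\times$. Such a character decomposes canonically as a product $\chi_1\otimes\cdots\otimes\chi_n$ of Hecke characters of~$F$, and I will use this decomposition to reduce to the case $n=1$.

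The first step is to check that the notions of algebraic and arithmetic are each compatible with this product decomposition. For algebraicity --- which in the torus case is unambiguous since half the sum of the positive roots vanishes --- this is immediate from the product compatibility of local Langlands parameters noted in Section~\ref{locallanglandsatinfinity}: the parameter of $\pi_v$ at an infinite place is the product of the parameters of the $(\chi_i)_v$, and the resulting $\lambda_\sigma$ is the direct sum of the individual ones, so lies in $X^*(T)$ if and only if each summand does. For arithmeticity, the analogous statement for Satake parameters and $\Q$-structures under products was left as an exercise to the reader at the end of Section~\ref{satakeparams}. Combined with the observation that a finite compositum of number fields is again a number field (used to amalgamate the $E_i$'s and the exceptional sets $S_i$'s attached to the individual $\chi_i$'s), this gives the equivalence of arithmeticity for $\pi$ with simultaneous arithmeticity for each $\chi_i$.

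The second step is to apply the $\GL_1$ case. As the authors have already noted in the discussion following Conjectures~\ref{conj:LarithmeticLalgebraic} and~\ref{conj:CarithmeticCalgebraic}, the direction ``algebraic implies arithmetic'' for a Hecke character is essentially classical: Weil's construction produces a compatible system of $\ell$-adic characters whose Frobenius traces at unramified places are the Satake parameters of the original character, and these lie in the fixed number field of coefficients. The converse direction ``arithmetic implies algebraic'' is precisely the transcendence result of Waldschmidt~\cite{MR608530}, which asserts that a Hecke character of~$F$ whose finite Satake parameters all lie in a common number field must be of type $A_0$ at infinity.

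The only substantive ingredient beyond bookkeeping is the Waldschmidt input, which is being cited as a black box; everything else is formal compatibility of the Satake isomorphism and Langlands parameters with direct products of tori. The base change machinery developed in the lemmas immediately preceding this theorem plays no role in the split case, but is presumably what will allow the authors to bootstrap from split tori to arbitrary tori by descent from a splitting field in a subsequent theorem.
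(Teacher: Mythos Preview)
Your argument is correct. The paper's own proof is considerably terser than yours: it simply declares the ``algebraic implies arithmetic'' direction to be standard and cites Th\'eor\`eme~5.1 of~\cite{MR693328} directly for the converse in the split-torus case, without performing any reduction. Your route---decomposing a split torus as $\GL_1^n$, splitting $\pi$ into Hecke characters, and invoking the $\GL_1$ result of Waldschmidt~\cite{MR608530} componentwise---is a legitimate and more explicit alternative that lands on the same transcendence input. The product-compatibility facts you invoke are indeed recorded (or left as easy exercises) earlier in the paper, so nothing is missing. Your closing remark is also accurate: the base-change lemmas surrounding this theorem are not used here but are exactly what the paper uses next to pass from split tori to general tori.
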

\begin{proof} The fact that algebraic implies arithmetic is standard; the other
implication is Th\'eor\`eme~5.1
of~\cite{MR693328} (which uses a non-trivial result in transcendence
theory).
\end{proof}

\begin{corollary}\label{ar_implies_al}
If $G$ is a torus over a number field~$F$
and  $\pi$ is an automorphic representation of $G$, and 
$\pi$ is arithmetic, then $\pi$ is algebraic.
\end{corollary}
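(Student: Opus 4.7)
The plan is to reduce the general torus case to the split torus case by base change to a splitting field, invoking the three preceding results in sequence.

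First I would choose a finite Galois extension $K/F$ inside $\overline{F}$ which splits $G$, so that $G_K$ becomes a split torus over the number field $K$. Then I would consider the base change $\BC(\pi)$, which is an automorphic representation of $G_K$ obtained from $\pi$ via the norm map $N:G(\A_K)\to G(\A_F)$. Since $\pi$ is arithmetic by hypothesis, Corollary \ref{cor: base change of arithmetic is arrithmetic} immediately gives that $\BC(\pi)$ is arithmetic as an automorphic representation of the split torus $G_K$ over~$K$.

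Next, since $G_K$ is split over the number field~$K$, I would apply Theorem \ref{thm: alg and arith the same for split tori} to conclude that $\BC(\pi)$ is algebraic. Finally, by Lemma \ref{algBC}, $\pi$ is algebraic if and only if $\BC(\pi)$ is algebraic, so $\pi$ itself is algebraic, completing the proof.

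The three ingredients are all in place, so there is no real obstacle; the one conceptual step worth flagging is that the split-torus case of Theorem \ref{thm: alg and arith the same for split tori} is the deep input, since it rests on Waldschmidt's transcendence-theoretic result (Th\'eor\`eme~5.1 of~\cite{MR693328}). Once that is granted, the reduction via base change is purely formal: arithmeticity descends along $N$ because the image of the base-changed character lies inside the image of the original one, and algebraicity is insensitive to base change because it is defined purely in terms of the restriction of the local Weil parameters to $\overline{k}^\times$.
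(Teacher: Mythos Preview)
Your proof is correct and follows exactly the same route as the paper: base change to a splitting field via Corollary~\ref{cor: base change of arithmetic is arrithmetic}, invoke Theorem~\ref{thm: alg and arith the same for split tori} on the split torus, and descend using Lemma~\ref{algBC}. Your additional commentary on where the depth lies (Waldschmidt's transcendence result) is accurate and helpful.
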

\begin{proof} If $\pi$ is arithmetic then its base change
to $K$ (a splitting field for~$G$)
is arithmetic (by Corollary \ref{cor: base change of arithmetic is arrithmetic}), and hence algebraic by the previous theorem.
Hence $\pi$ is algebraic by Lemma~\ref{algBC}.
\end{proof}

To show that algebraic automorphic representations for~$G$
are arithmetic, we give a re-interpretation of what it means
for an automorphic representation of a torus to
be algebraic; we are grateful to Ambrus P\'al for pointing out to us
that such a re-interpretation should exist. First some notation.
Let $F_\infty:=F\otimes_\Q\R$. Let
$\Sigma$ denote
  the set of embeddings $\sigma:F\into\Qbar$. Recall that because
we have fixed an embedding $\Qbar\to\C$, each $\sigma\in\Sigma$
can be regarded as an embedding $F\to\C$, and hence
induces maps $F_\infty\to\C$ and
$\sigma_\infty:G(F_\infty)\to G_{\sigma}(\C)$, where $G_\sigma$ is
the group over $\Qbar$ induced from $G$ via base extension via $\sigma$.

\begin{prop}\label{prop: characterising algebraic representations of
    tori in terms of their infinite part}
The representation
$\pi$ is algebraic if and only if for each $\sigma\in\Sigma$
there is an algebraic character
$\lambda_\sigma:G_\sigma\to\GL_1/\Qbar$ such that $\pi$
agrees with
$\prod_{\sigma\in\Sigma}\lambda_\sigma\circ\sigma_\infty$ on $G(F_\infty)^0$
(the identity component of the Lie group $G(F_\infty)$).
\end{prop}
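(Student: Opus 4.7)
The plan is to reduce the global assertion to an archimedean-local statement at each infinite place of $F$, then verify that local statement by unwinding the Langlands correspondence for tori at infinity.

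\emph{Step 1 (localisation).} Writing $\pi = \otimes'_v \pi_v$, each embedding $\sigma \in \Sigma$ factors uniquely as $F \into F_{v(\sigma)} \into \C$ through a unique infinite place $v(\sigma)$ and an embedding $F_{v(\sigma)} \into \C$; conversely every such pair arises. Hence $\Sigma = \bigsqcup_{v\mid\infty}\Sigma_v$, with $\Sigma_v$ the set of embeddings $F_v \into \C$. Since $G(F_\infty)^0 = \prod_{v\mid\infty} G(F_v)^0$ and $\sigma_\infty$ factors through projection to the $v(\sigma)$-component, the proposition reduces to the following local claim for each $v\mid\infty$: $\pi_v$ is $L$-algebraic if and only if there exist algebraic characters $\mu_\sigma \in X^*(G \times_{F_v, \sigma} \Qbar)$ for $\sigma \in \Sigma_v$ such that
$$\pi_v|_{G(F_v)^0}(t) \;=\; \prod_{\sigma \in \Sigma_v} \sigma\bigl(\mu_\sigma(t)\bigr) \qquad \text{for all } t \in G(F_v)^0.$$

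\emph{Step 2 (explicit archimedean LLC for tori).} Fix $v\mid\infty$ and write $k = F_v$, $T = G_{F_v}$. The archimedean local Langlands correspondence for tori identifies continuous characters $\chi : T(k) \to \C^\times$ with $H^1(W_k, \widehat T(\C))$ via composition with the reciprocity map $k^\times \to W_k^{\ab}$. Combining this with the formula $r_\chi(z) = \sigma(z)^{\lambda_\sigma}\tau(z)^{\lambda_\tau}$ of Section~\ref{locallanglandsatinfinity}, one obtains the explicit identity
$$\chi(t) \;=\; \prod_{\sigma \in \Sigma_v} \sigma\bigl(\lambda_\sigma(t)\bigr) \qquad \text{for all } t \in T(k)^0,$$
where each $\lambda_\sigma \in X^*(T) \otimes \C$ is applied via the induced homomorphism $T(k) \to \overline k^\times$. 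One verifies this directly for $T = \Gm$ (noting $\mathrm{rec}: \C^\times \to \R^\times$ equals $z \mapsto z\overline z$ at a real place, and $W_\C = \C^\times$ canonically at a complex place), then extends to split tori componentwise, and finally to general tori by embedding $T$ in an induced torus and invoking functoriality on both sides.

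\emph{Step 3 (matching and conclusion).} The $L$-algebraicity condition $\lambda_\sigma \in X^*(T)$ is exactly what turns each $\lambda_\sigma$ appearing in the formula of Step~2 into an honest algebraic character of $T_{\overline k} = G \times_{F_v,\sigma}\Qbar$ (using the canonical identification $X^*(T) = X^*(T_{\overline k})$). The forward direction of the local claim in Step~1 is then immediate. For the converse, given algebraic $\mu_\sigma$ realising $\pi_v|_{G(F_v)^0}$ as in Step~1, linear independence of distinct algebraic characters on the connected Lie group $G(F_v)^0$ forces $\mu_\sigma = \lambda_\sigma$ for each $\sigma$, so each $\lambda_\sigma$ lies in $X^*(T)$ and $\pi_v$ is $L$-algebraic. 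Reassembling over all $v\mid\infty$ yields the proposition.

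The main obstacle is Step~2: one needs to establish the explicit archimedean local Langlands formula for tori compatibly with the normalisations fixed in the excerpt (in particular, reciprocity sending uniformisers to geometric Frobenius, and the contravariance hidden in the identification $X_*(\widehat T) = X^*(T)$). The result is essentially classical but requires careful bookkeeping to state correctly; once in hand, the rest of the argument is formal.
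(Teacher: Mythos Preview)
Your proposal is correct and takes essentially the same approach as the paper: the paper's own proof simply says the statement is local at infinity and ``can be checked by `brute force', explicitly working out what the local Langlands correspondence for tori over the reals and complexes is and noting that it is true in every case,'' which is exactly your Step~1 reduction followed by the Step~2 verification (and you rightly flag the bookkeeping there as the only nontrivial point). One small caution: in Step~2 you are overloading the symbol~$\sigma$, since $\Sigma_v$ has one element at a real place whereas the $\sigma,\tau$ of \S\ref{locallanglandsatinfinity} index the two isomorphisms $\overline{k}\to\C$; the formula you want at a real place really does involve only the single $\lambda_\sigma$ attached to the unique $\sigma\in\Sigma_v$ (interpreted via $T(\R)^0\hookrightarrow T(\C)$), but it is worth disentangling the two indexings when you write this up.
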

\begin{proof} 
This statement is local at infinity, and can be checked by ``brute force'',
explicitly working out what the local Langlands correspondence for
tori over the reals and complexes is and noting that it is true in
every case.
\end{proof}

\begin{corollary}\label{cor: valued in a number field at infinity} If $\pi$ is an algebraic automorphic representation
of $G/F$, and if we write $\pi=\pi_f\times\pi_\infty$, with
$\pi_\infty:G(F_\infty)\to\C^\times$, then $\pi_\infty(G(F))$
is contained within a number field.
\end{corollary}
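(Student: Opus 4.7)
The plan is to read off the corollary directly from Proposition~\ref{prop: characterising algebraic representations of tori in terms of their infinite part}, modulo a small detour through the component group of $G(F_\infty)$.

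First I would unwind what the proposition gives us. For each $\sigma \in \Sigma$ we have an algebraic character $\lambda_\sigma : G_\sigma \to \GL_1/\Qbar$, and this character is defined (as a morphism of algebraic groups) over some number field $E_\sigma \subset \Qbar$. Let $L \subset \Qbar$ be a number field containing (a) $E_\sigma$ for every $\sigma \in \Sigma$ and (b) the image $\sigma(F)$ for every $\sigma \in \Sigma$. Such an $L$ exists because $\Sigma$ is finite (of size $[F:\Q]$) and each $E_\sigma$ and each $\sigma(F)$ is a number field inside $\Qbar$. Then for any $g \in G(F)$ and any $\sigma$, the element $\sigma_\infty(g) \in G_\sigma(\C)$ actually lies in $G_\sigma(L)$, so $\lambda_\sigma(\sigma_\infty(g)) \in L^\times$. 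In particular, the product $\prod_{\sigma \in \Sigma} \lambda_\sigma(\sigma_\infty(g))$ lies in $L^\times \subset \Qbar^\times$ for every $g \in G(F)$.

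Next I would handle the discrepancy between $\pi_\infty$ and $\prod_\sigma \lambda_\sigma \circ \sigma_\infty$. Both are continuous characters of $G(F_\infty)$ with values in $\C^\times$, and by the proposition they agree on the identity component $G(F_\infty)^0$. Hence their ratio descends to a character of the quotient $G(F_\infty)/G(F_\infty)^0$. Since $G$ is a torus and $F_\infty$ is a finite product of copies of $\R$ and $\C$, this quotient is a finite (elementary abelian 2-)group; say its exponent divides $N$. Consequently the ratio $\pi_\infty(g) / \prod_\sigma \lambda_\sigma(\sigma_\infty(g))$ is an $N$-th root of unity for every $g \in G(F_\infty)$, and in particular for every $g \in G(F)$.

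Putting the two steps together, for every $g \in G(F)$ we have
\[
\pi_\infty(g) \;=\; \zeta(g) \cdot \prod_{\sigma \in \Sigma} \lambda_\sigma(\sigma_\infty(g)) \;\in\; \mu_N \cdot L^\times,
\]
where $\mu_N$ denotes the group of $N$-th roots of unity. Hence $\pi_\infty(G(F)) \subseteq L(\mu_N)^\times$, and $L(\mu_N)$ is a number field, which is what was claimed.

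The only nontrivial point is finiteness: we crucially need both $\Sigma$ and the component group of $G(F_\infty)$ to be finite so that everything stays inside a single number field. Both are automatic here, so there is no real obstacle; the corollary is essentially a bookkeeping consequence of the proposition.
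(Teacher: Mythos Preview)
Your proof is correct and follows essentially the same route as the paper's: split $\pi_\infty|_{G(F)}$ as a finite-order character times the product $\prod_\sigma \lambda_\sigma\circ\sigma_\infty$, and then observe that each $\lambda_\sigma$ descends to a number field large enough to contain $\sigma(F)$, so that $\lambda_\sigma(\sigma_\infty(G(F)))\subset L^\times$. The only cosmetic difference is that the paper notes the component group of $G(F_\infty)$ is an elementary abelian $2$-group (so the finite-order character has order at most~$2$), whereas you only use that it is finite; this does not affect the argument.
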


\begin{proof} 
By the preceding proposition we know that $\pi_\infty|_{G(F)}$
is the product of a character of order at most~2 by a continuous group
homomorphism $G(F)\to\C^\times$
which is the product of maps
$\phi_\sigma:G(F)\stackrel{\sigma}{\to}G_\sigma(\C)\to\C^*$ given by
composing an
algebraic character with an embedding $\sigma:F\into\C$. Hence it suffices
to prove that $\phi(G(F))$ is contained within a number field for such
a $\phi$. However both $T$ and $\mathbb{G}_m$ are defined over $F$, so
the character descends to some number field $L$, which we may assume
splits $T$ and contains the images of all embeddings $F\into\C$. But then $\phi(F)\subset\phi(L)\subset L^\times$, as required.
\end{proof}

\begin{thm} The notions of arithmetic and algebraic coincide for
automorphic representations of tori over number fields.
\end{thm}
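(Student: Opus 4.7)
The plan is to argue the two directions separately. ``Arithmetic implies algebraic'' is already Corollary~\ref{ar_implies_al}, so I focus on ``algebraic implies arithmetic''. Let $\pi = \pi_f \cdot \pi_\infty$ be an algebraic automorphic character of $T$. By Corollary~\ref{cor: valued in a number field at infinity}, $\pi_\infty(T(F)) \subseteq L^\times$ for some number field $L$, and since $\pi|_{T(F)} = 1$ this automatically gives $\pi_f(\gamma) = \pi_\infty(\gamma)^{-1} \in L^\times$ for all $\gamma \in T(F)$.

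Next I would choose a compact open subgroup $V = \prod_v V_v \subseteq T(\A_F^f)$ contained in $\ker(\pi_f)$, with $V_v$ hyperspecial at almost all $v$ (arrangeable since $\pi$ is unramified almost everywhere), and invoke the finiteness of the class number of $T$ to get $n := [T(\A_F^f) : T(F) \cdot V] < \infty$. For any $x \in T(\A_F^f)$, writing $x^n = \gamma v$ with $\gamma \in T(F)$ and $v \in V$ then yields $\pi_f(x)^n = \pi_f(\gamma) \in L^\times$. Picking coset representatives $x_1,\dots,x_n$ for $T(\A_F^f)/T(F) V$ and setting $E := L(\pi_f(x_1),\dots,\pi_f(x_n))$ produces a single number field $E$ (each generator being an $n$-th root of an element of $L^\times$, hence algebraic) with $\pi_f(T(\A_F^f)) \subseteq E^\times$.

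In particular, at any finite place $v$ where $V_v$ is hyperspecial, $\pi_v(T(F_v)) \subseteq E^\times$, so by the earlier equivalence characterising when an unramified character of a torus is defined over a subfield, the Satake parameter of $\pi_v$ is defined over $E$. This proves $\pi$ is arithmetic. The main input beyond the setup already in the paper is the class-number finiteness $[T(\A_F^f) : T(F) \cdot V] < \infty$ (generalising the finiteness of the ideal class group from $\GL_1$), which is standard but the one non-trivial global finiteness theorem being used; no new transcendence beyond what was already needed for Corollary~\ref{ar_implies_al} enters at this stage.
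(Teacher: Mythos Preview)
Your proof is correct, and it takes a genuinely different route from the paper's.

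Both arguments handle ``arithmetic $\Rightarrow$ algebraic'' via Corollary~\ref{ar_implies_al} and both feed the algebraicity hypothesis through Corollary~\ref{cor: valued in a number field at infinity} to get $\pi_\infty(T(F))$, hence $\pi_f(T(F))$, into a number field. The divergence is in how one propagates this from $T(F)$ to all of $T(\A_F^f)$. The paper base changes to a splitting field $K$, invokes the already-established split case (Theorem~\ref{thm: alg and arith the same for split tori}) to get $\BC(\pi)$ arithmetic, and then descends using the class-field-theoretic fact that $T(F)\cdot N(T(\A_K))$ has finite index in $T(\A_F)$. You bypass the base change entirely and instead use the finiteness of the class number of $T$ over $F$ to get $[T(\A_F^f):T(F)\cdot V]<\infty$ directly; since everything is abelian this is a genuine quotient group of order $n$, so $x^n\in T(F)\cdot V$ and the argument goes through. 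Your route is shorter and more self-contained (it never needs Theorem~\ref{thm: alg and arith the same for split tori} for the ``algebraic $\Rightarrow$ arithmetic'' direction), at the cost of importing the class-number finiteness for general tori rather than the norm-index statement; these are comparable inputs. The paper's route has the mild expository advantage of making the split case the visible core, but your argument is a clean and legitimate alternative.
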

\begin{proof} Let $G/F$ be a torus over a number field, and let $\pi$
be an automorphic representation of $G$. By Corollary~\ref{ar_implies_al}
we know that if $\pi$ is arithmetic then $\pi$ is algebraic, so we only have
to prove the converse. We will make repeated use of the trivial observation
(already used above) that if $X$ is a finite index subgroup of an
abelian group $Y$, then the image of a character of $Y$ is contained
in a number field if and only if the image of its restriction to $X$ is contained
in a (possibly smaller) number field.

Let $\pi=\otimes_v\pi_v$ be algebraic. If $K$ is a finite
Galois extension of $F$ splitting~$G$ then $BC_{K/F}(\pi)$ is algebraic
by Lemma~\ref{algBC} and hence arithmetic by Theorem \ref{thm: alg and arith the same for split tori}.
Hence there is a number field~$E$
and some finite set~$S_K$ of places of~$K$, containing
all the infinite places, such
that for $w\not\in S_K$, $\BC(\pi)_w$ is defined over~$E$, and
hence $\BC(\pi)_w$ has image in $E^\times$. By increasing $S_K$
if necessary, we can assume that $S_K$ is precisely the set of places
of~$K$ lying above a finite set~$S$ of places of~$F$.

Let $N:G(\A_K)\to G(\A_F)$ denote the norm map. Standard results from
global class field theory (see for example p.244 of~\cite{MR1610871}
for the crucial argument)
imply that $G(F)N(G(\A_K))$ is a closed and open subgroup of finite index
in $G(\A_F)$. Hence if $\A_F^S$ denotes the restricted product of
the completions of $F$ at places not in~$S$, and $\A_K^{S_K}$ denotes
the analogous product for $K$, then $G(F)N(G(\A_K^{S_K}))$ has
finite index in $G(\A_F^S)$. Let $\pi^S:G(\A_F^S)\to\C^\times$
denote the restriction of $\pi$ to $G(\A_F^S)$. Then
$\pi=\pi^S.\prod_{v\in S,v\nmid\infty}\pi_v.\pi_\infty$.
We know that $\pi$ is trivial on $G(F)$ (by definition) and that $\pi_{\infty}$ sends
$G(F)$ to a number field (by Corollary~\ref{cor: valued in a number field at infinity}).
We also know  that
$\pi_v(G(F_v))$ (and thus $\pi_v(G(F))$ is contained within
a number field for each finite place $v\in S$ (because $BC_{K/F}(\pi)$
is arithmetic, we know that $\pi_v(N(K_w))$ is contained in a number
field, where $w|v$ is a place of $K$, and $N(K_w)$ has finite index in
$F_v$). Hence $\pi^S(G(F))$
is contained within a number field. Then since $G(F)N(G(\A_K^{S_K}))$
has finite index in $G(\A_F^S)$, we deduce that $\pi(G(\A_F^S))$ is contained within a number field, and
hence $\pi$ is arithmetic, as required.
\end{proof}

What remains now is
to prove Conjecture \ref{conj:existence of Galois representations - strong
    version with crystalline etc} (which of course implies Conjecture
  \ref{conj:existence of Galois representations}). This follows
  straightforwardly from Langlands' proof of the Langlands
  correspondence for tori, and the usual method for associating Galois
  representations to algebraic representations of $\Gm$. Take $\pi$,
  $G$, $F$ and $K$ as above, and again let $\Sigma$ denote
the field embeddings $F\to\Qbar$, noting now that because of our fixed
embedding $\Qbar\subset\Qpbar$ we can also interpret each element of $\Sigma$
as a field embedding $F\to\Qpbar$.

The first step is to associate a
``$p$-adic automorphic representation''---a continuous group homomorphism
$\pi_p:G(F)\backslash G(\A_F)\to\Qpbar^\times$---to $\pi$, which we do by
mimicking the standard construction in the split case.
For $\sigma\in\Sigma$ recall that $\sigma_\infty$ is the
induced map $G(F_\infty)\to G_\sigma(\C)$.
By Proposition \ref{prop: characterising algebraic representations of
    tori in terms of their infinite part} there are characters
  $\lambda_\sigma\in X^*(G_\sigma)$ (regarded here as maps
$G_\sigma(\C)\to\C^\times$)
for each $\sigma\in \Sigma$ with the
  property that \[\pi|_{G(F_\infty)^0}=\prod_{\sigma\in
    \Sigma}\lambda_\sigma\circ \sigma_\infty.\]
The right hand side of the above equation can be regarded as a character
of $G(F_\infty)$ and hence as a character $\lambda_\infty$
of $G(\A_F)$, trivial at the finite places. Define
$\pi^{\alg}=\pi/\lambda_\infty$, a continuous group homomorphism
$G(\A_F)\to\C^\times$ trivial on $G(F_\infty)^0$ but typically
non-trivial on $G(F)$. However $\pi^{\alg}(G(F))$ is
contained within a number field by Corollary~\ref{cor: valued in a number field at infinity}, and it is now easy to check
that the image of $\pi^{\alg}$ is contained within $\Qbar$.
We now regard $\pi^{\alg}$ as taking values in $\Qpbar$ via our
fixed embedding $\Qbar\subset\Qpbar$.

Now, let
  $F_p=F\otimes_\Q\Q_p$, and note that every $\sigma:F\to\Qpbar$ in $\Sigma$
induces a map $F_p\to\Qpbar$ and hence a map
$\sigma_p:G(F_p)\to G_\sigma(\Qpbar)$.
Each $\lambda_\sigma$ can be regarded as
a map $G_\sigma(\Qpbar)\to\Qpbar^\times$, and hence the product
$$\lambda_p:=\prod_\sigma\lambda_\sigma\circ\sigma_p$$
is a continuous group homomorphism $G(F_p)\to\Qpbar^\times$
and can also be regarded as a continuous group homomorphism
$G(\A_F)\to\Qpbar^\times$, trivial at all places other than those above~$p$.
The crucial point, which is easy to check, is that the
product $\pi_p:=\pi^{\alg}\lambda_p$ is a continuous group
homomorphism $G(\A_F)\to\Qpbar^\times$ which is trivial on $G(F)$.

Now, in Theorem 2(b) of~\cite{MR1610871},
Langlands proves that there is a natural
surjection with finite kernel from the set of $\widehat{G}(\C)$-conjugacy
classes of continuous homomorphisms
from the Weil group $W_F$ to ${}^LG(\C)$ to the set of continuous
homomorphisms from $G(F)\backslash G(\A_F)$ to $\C^\times$ (that is,
the set of automorphic representations of $G/F$), compatible with the
local Langlands correspondence at every place. His proof starts by
establishing a natural surjection from the analogous sets with
the continuity conditions removed, and then checking that continuity
on one side is equivalent to continuity on the other. 
However, $\Qpbar\cong\C$ as abstract fields,
and one can check that the calculations on pages 243ff
make no use of any particular features of the topology of $\C^\times$,
and hence apply equally well to continuous homomorphisms $W_F\to{}^LG(\Qpbar)$
and continuous characters $G(F)\backslash
G(\A_F)\to\Qpbar^\times$. Thus $\pi_p$
gives a continuous homomorphism (or perhaps several, in which case we
simply choose one) \[r_\pi:W_F\to{}^LG(\Qpbar),\]
and by construction we see that for each finite place $v\nmid p$ at which
$\pi_v=(\pi_p)_v$ is unramified, $r_\pi|_{W_{F_v}}$ is
$\widehat{G}(\Qpbar)$-conjugate to $r_{\pi_v}$. Again, by
construction the composite of $r_{\pi}$ and the natural
projection ${}^LG(\Qpbar)\to\Gal(\overline{F}/F)$ is just the natural
surjection $W_F\to\Gal(\overline{F}/F)$.
\begin{lem}
  \label{lem: Galois rep of torus really does facto through the
    Galois group}The representation $r_\pi$ of $W_F$ factors
  through the natural surjection $W_F\to\Gal(\overline{F}/F)$.
\end{lem}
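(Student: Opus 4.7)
The plan is to exploit the structure of $\pi_p$ at the archimedean places.

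First, I would observe that $\pi_p$ is trivial on $G(F_\infty)^0 := \prod_{v\mid\infty} G(F_v)^0$: by the defining property of the $\lambda_\sigma$ from Proposition \ref{prop: characterising algebraic representations of tori in terms of their infinite part}, the factor $\pi^{\alg} = \pi/\lambda_\infty$ kills $G(F_\infty)^0$, while $\lambda_p$ is supported only at primes above $p$ and hence acts trivially at every infinite place. Next, I invoke the local-global compatibility of Langlands' correspondence built into his proof of Theorem 2 in \cite{MR1610871}, together with the explicit local Langlands correspondence for tori at archimedean places. At each infinite $v$, a continuous character of $G(F_v)$ trivial on $G(F_v)^0$ corresponds to a class in $H^1(W_{F_v}, \Gdual(\Qpbar))$ with a representative cocycle trivial on the identity component $\ker(W_{F_v} \to \Gal(\overline{F_v}/F_v))$; at each finite $v$ the natural map $W_{F_v} \hookrightarrow \Gal(\overline{F_v}/F_v)$ has trivial kernel, so no local condition arises. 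Combining these facts, the cohomology class of $r_\pi$ in $H^1(W_F, \Gdual(\Qpbar))$ lies in the image of inflation from $H^1(\Gal(\overline{F}/F), \Gdual(\Qpbar))$, and hence $r_\pi$ factors through $\Gal(\overline{F}/F)$.

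The main obstacle is the passage from local triviality of cocycles to global factorization through $\Gal(\overline{F}/F)$: this is not a formal consequence of the local statements alone, but falls out of the cohomological setup of Langlands' proof, which identifies continuous characters of $G(\A_F)/G(F)$ trivial on $G(F_\infty)^0$ with admissible homomorphisms $\Gal(\overline{F}/F) \to \LG(\Qpbar)$. In the $\GL_1/F$ case this reduces to the familiar class field theoretic identification of $C_F/D_F$ with $\Gal(F^{\ab}/F)$, where $D_F$ is the identity component of the idele class group $C_F = \A_F^\times/F^\times$; the general torus case reduces to this one by passing to a splitting field, as in the earlier arguments of this section.
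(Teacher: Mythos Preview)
Your approach takes a genuinely different and much longer route than the paper. The paper's proof is a single topological observation: the kernel of $W_F\to\Gal(\overline{F}/F)$ is the connected component of the identity in $W_F$, and ${}^LG(\Qpbar)$ is totally disconnected (since $\Qpbar$ with its $p$-adic topology is totally disconnected), so any continuous homomorphism $r_\pi$ must kill that connected kernel. No input from $\pi_p$, local Langlands, or class field theory is needed---the result is automatic once one remembers the target is a $p$-adic group.

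Your argument, by contrast, tries to trace the factorization back to the automorphic side: $\pi_p$ kills $G(F_\infty)^0$, so the local archimedean parameters should be inflated from Galois, and hence so should the global parameter. This is morally correct but the local-to-global passage is not fully justified. You acknowledge that local triviality on identity components does not formally imply global factorization, and then appeal to ``the cohomological setup of Langlands' proof'' as establishing an identification of characters trivial on $G(F_\infty)^0$ with $\Gal(\overline{F}/F)$-valued parameters. But Langlands' theorem as quoted in the paper gives a correspondence with $W_F$-valued parameters, not Galois-valued ones; extracting the refined statement you need requires exactly the kind of argument you are trying to give, so as written this step is close to circular (or at least relies on details of \cite{MR1610871} that you have not spelled out). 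Your fallback reduction to $\GL_1$ and class field theory would work, but is a substantial detour compared to the paper's one-line proof. The lesson here is that the $p$-adic topology of the target does all the work for free.
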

\begin{proof}
The kernel of the natural surjection  $W_F\to\Gal(\overline{F}/F)$ is
the connected component of the identity in $W_F$; but $r_{\pi}$
must vanish on this, because ${}^LG(\Qpbar)$ is totally disconnected.
\end{proof}
We let $\rho_\pi$ denote the representation of
$\Gal(\overline{F}/F)$ determined by $r_\pi$.
\begin{lem}
  The representation $\rho_\pi$ satisfies all the properties
  required in the statement of Conjecture \ref{conj:existence of Galois representations - strong
    version with crystalline etc}.
\end{lem}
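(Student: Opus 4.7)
The plan is to check the four bullet points of the conjecture in turn. Point 1 is essentially built in: $r_\pi$ was constructed so that its composition with ${}^LG(\Qpbar)\to\Gal(\overline{F}/F)$ is the natural surjection $W_F\to\Gal(\overline{F}/F)$, and by Lemma \ref{lem: Galois rep of torus really does facto through the Galois group} the representation descends to $\rho_\pi$ preserving this identity. For point 2, at any finite place $v\notin S$ the local character $(\pi_p)_v$ coincides with $\pi_v$ itself, because $\lambda_\infty$ and $\lambda_p$ are by construction trivial outside the archimedean places and the places above $p$ respectively; the required $\Gdual(\Qpbar)$-conjugacy is then precisely the local-global compatibility of Langlands' correspondence for tori, transported from $\C$ to $\Qpbar$ via $\iota$ as in the construction preceding the lemma.

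The substantive content is point 3. Fix $v\mid p$. Unwinding the construction of $\pi_p$ one sees that $(\pi_p)_v$ equals $\pi_v$ multiplied by the algebraic character $(\lambda_p)_v=\prod_{\sigma}(\lambda_\sigma\circ\sigma_p)$ on $G(F_v)$, where the product ranges over the $\sigma\in\Sigma$ inducing the place $v$. I would handle this by reducing to the case $G=\GL_1$. First, base change along a finite Galois splitting extension $K/F$, combined with the good behaviour of the de Rham/crystalline/Hodge--Tate properties and of Hodge--Tate cocharacters under restriction of scalars, reduces the question to a split torus. Diagonalising the split torus then reduces matters to a single $\GL_1$-component. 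For $\GL_1$, the assertion is exactly the classical construction of $p$-adic Galois characters attached to algebraic Hecke characters as in Serre \cite{MR563476}: an algebraic character of $F_v^\times$ of the form $x\mapsto\prod_\sigma\sigma(x)^{n_\sigma}$ corresponds via local class field theory to a Hodge--Tate character whose Hodge--Tate cocharacter at the embedding $\sigma$ is $n_\sigma$ (with the sign convention fixed in Remark \ref{rem: recipe for HT cocharacter}), and it is crystalline whenever the smooth twist $\pi_v$ is unramified. Patching these Galois characters together across all embeddings $\sigma$ over $v$ gives the desired de Rham/crystalline statement for $\rho_\pi|_{\Gal(\overline{F_v}/F_v)}$, with Hodge--Tate cocharacter matching the recipe in Remark \ref{rem: recipe for HT cocharacter}.

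Point 4 is a direct calculation at infinity. At a real place $v$, the character $(\pi_p)_v=\pi_v\cdot\lambda_{\infty,v}^{-1}$ is trivial on $G(F_v)^0$ by construction, hence factors through the finite component group $\pi_0(G(F_v))$. Under the local Langlands correspondence for real tori recalled in \S\ref{locallanglandsatinfinity}, such a character corresponds to an admissible Weil representation $W_\R\to{}^LG(\Qpbar)$ whose restriction to $\overline{\R}^\times$ is trivial, so its value on the element $j$ is pinned down. Unwinding the definition of $\alpha_v$ in Lemma~\ref{cxconj}, the factor $\lambda_{\sigma_v}(i)\lambda_{\tau_v}(i)$ appearing there is exactly the correction that encodes the adjustment by $\lambda_{\infty,v}^{-1}$, so $\rho_\pi(c_v)$ and $\iota(\alpha_v)$ agree up to $\Gdual(\Qpbar)$-conjugacy.

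The main obstacle is the bookkeeping in point 3: one must carefully align sign conventions between the Hodge--Tate cocharacter recipe and the normalisations of local Langlands at a $p$-adic place for tori, and ensure that the reduction from a general torus to $\GL_1$ via base change and restriction of scalars is compatible with all of the relevant $p$-adic Hodge theoretic properties. Points 1, 2 and 4 are then essentially formal consequences of the construction together with Langlands' theorem for tori.
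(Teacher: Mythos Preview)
Your proposal is correct and follows essentially the same route as the paper. For point~3 the paper likewise reduces to the split case via base change (noting that de Rham and Hodge--Tate cocharacters can be checked after any finite extension, and crystalline after an unramified one), and for point~4 it also computes $r_{(\pi_p)_v}(j)$ by writing $(\pi_p)_v=\pi_v/\chi_v$ and using additivity of local Langlands for real tori in $H^1(W_{F_v},\widehat{G})$.

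The only place where your sketch is thinner than the paper is the final step of point~4: you assert that ``the factor $\lambda_{\sigma_v}(i)\lambda_{\tau_v}(i)$ is exactly the correction that encodes the adjustment by $\lambda_{\infty,v}^{-1}$,'' but this needs a concrete identification. The paper supplies it by checking (either via Borel's description of local Langlands for real tori in~\cite[\S9.4]{MR546608}, or by a case-by-case check) that the cocycle attached to $\chi_v$ sends $j$ to $\lambda_v(-1)\in\widehat{G}$; the desired conjugacy then drops out of an explicit cocycle calculation. You should spell out this computation rather than leave it as an assertion.
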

\begin{proof}
  We need to check the claimed properties at places dividing $p$ and
  at real places. For the former, we must firstly check that
  $\rho_\pi$ is de Rham with the correct Hodge--Tate
  weights. However, it is sufficient to check this after restriction
  to any finite extension of $F$, and in particular we may choose an
  extension which splits $G$. The evident compatibility of the
  construction of $\rho_\pi$ with base change then easily
  reduces us to the split case, which is standard. Similarly, the
  property of being crystalline may be checked over any unramified
  extension, and if $\pi_v$ is unramified then by definition $G$
  splits over an unramified extension of $F_v$, and we may again
  reduce to the split case.

Suppose now that $v$ is a real place of $F$. Recall that the natural
surjection $W_{F_v}\to\Gal(\overline{F_v}/F_v)$ sends $j$ to complex
conjugation, so we need to
determine $r_\pi|_{W_{F_v}}(j)$. Let $\sigma_v:F\into\C$
denote the embedding corresponding to $v$ (it is unique because $v$
is a real place) and let $\lambda_v$ denote the character $\lambda_{\sigma_v}$
of $G_{\sigma_v}$. Let $\chi_v$ denote the map $G(F_v)\to\C^\times$ induced
by $\sigma_v$ and $\lambda_v$. Then $(\pi_p)_v=\pi_v/\chi_v$. Applying
local Langlands we see that the cohomology class in $H^1(W_{F_v},\widehat{G})$
associated to $(\pi_p)_v$ is the difference of those associated to $\pi_v$
and $\chi_v$ (because local Langlands is an isomorphism of groups in
this abelian setting). Furthermore, one can check (either by the construction
of the local Langlands correspondence for real tori in section~9.4
of~\cite{MR546608}, or an explicit case-by-case check) that the
cohomology class attached to $\chi_v$ is represented by a cocycle
which sends $j\in W_{F_v}$ to the element $\lambda_v(-1)$ of $\widehat{G}$
(where we now view $\lambda_v$ as a cocharacter $\C^\times\to\widehat{G}$).
Our assertion about $r_\pi|_{W_{F_v}}(j)$ now follows immediately from
an explicit calculation on cocycles.
\end{proof}
\section{Twisting and Gross' $\eta$.}\label{section:twisting}
\subsection{Algebraicity and arithmeticity under central extensions.}

\begin{defn}\label{defn:gmextension}
  We will call a central extension
$$1\to \Gm\to G'\to G \to 1 $$ of algebraic groups over $F$
 a \emph{$\Gm$-extension} of $G$.
\end{defn}
Note that one can (after making compatible choices of maximal compact
subgroups at infinity, and using the fact that $H^1(F,\Gm)=0$
by Hilbert 90) identify automorphic representations on $G$ with automorphic
representations on $G'$ which are trivial on $\Gm$.
We abuse notation slightly and speak about the $\Gm$-extension $G'\to G$
interchangeably with the $\Gm$-extension $1\to \Gm\to G'\to G\to 1$.

We now consider how our various notions of arithmeticity and algebraicity
behave under pulling back via a $\Gm$-extension. So say $G'\to G$
is a $\Gm$-extension. The induced map $G'(\A_F)\to G(\A_F)$
is a surjection,
and if $\pi$ is an automorphic representation of $G(\A_F)$ then
the induced representation~$\pi'$ of $G'(\A_F)$
is also an automorphic representation. Furthermore, we have the
following compatibilities between $\pi$ and $\pi'$.
\begin{lemma}\label{z-extension_preserves_stuff}
$\pi$ is $L$-algebraic (resp.\ $C$-algebraic, resp.\ $L$-arithmetic,
resp.\ $C$-arithmetic) if and only if $\pi'$ is.
\end{lemma}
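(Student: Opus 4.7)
The plan is to reduce the lemma to four purely local statements---one per condition---and then to exploit the rigid way a $\Gm$-extension interacts with both sides of the Langlands correspondence. All four conditions are local by definition: $L$- and $C$-algebraicity concern $\pi_v$ at the infinite places, while $L$- and $C$-arithmeticity concern $\pi_v$ at the unramified finite places. Hilbert~90 (applied to the split $\Gm$) gives that $G'(F_v)\twoheadrightarrow G(F_v)$ is surjective and that $\pi'_v$ is the pullback of $\pi_v$ along it. Moreover, if $G$ is unramified at a finite place $v$ then so is $G'$, since $\Gm$ is already split and $G'$ splits over the same unramified extension as $G$. So it suffices to treat each of the four conditions place-by-place.

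For the archimedean conditions, I would choose a maximal torus $T\subseteq G_{\kbar}$ and a lift $T'\subseteq G'_{\kbar}$ containing the central $\Gm$, giving an inclusion $X^*(T)\hookrightarrow X^*(T')$ with torsion-free cokernel $\Z$. The central $\Gm$ contributes nothing to the root system, so the positive roots and their half-sum $\delta$ for $G$ and $G'$ agree under this inclusion. Dually one obtains a closed immersion $\widehat{G}\hookrightarrow\widehat{G'}$ extending to ${}^{L}G\hookrightarrow{}^{L}G'$; by functoriality of archimedean local Langlands the parameter of $\pi'_v$ is obtained from that of $\pi_v$ by composition with this immersion, so $\lambda'_\sigma$ is the image of $\lambda_\sigma$ inside $X^*(T')\otimes\C$. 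Using the identity $X^*(T')\cap(X^*(T)\otimes\C)=X^*(T)$ inside $X^*(T')\otimes\C$, the conditions $\lambda_\sigma\in X^*(T)$ and $\lambda_\sigma-\delta\in X^*(T)$ each transfer exactly to the corresponding conditions for $\lambda'_\sigma$.

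For $C$-arithmeticity at an unramified finite place I would invoke Lemma~\ref{definedoverE}(iii): $\pi_v$ is defined over $E$ iff $\pi_v\cong\pi_v^\sigma$ for every $\sigma\in\Aut(\C/E)$. Pullback along $G'(F_v)\twoheadrightarrow G(F_v)$ is a fully faithful functor from irreducible representations of $G(F_v)$ onto irreducible representations of $G'(F_v)$ trivial on $\Gm(F_v)$, so this condition transfers back and forth between $\pi_v$ and $\pi'_v$. For $L$-arithmeticity, I would work on the dual side: the $\Q$-rational closed immersion $\widehat{G}\hookrightarrow\widehat{G'}$ sits in a short exact sequence $1\to\widehat{G}\to\widehat{G'}\to\Gm\to 1$ whose quotient records the Langlands parameter of the central character of $\pi'_v$ on $\Gm(F_v)$; since this character is trivial, the Satake class of $\pi'_v$ factors through $\widehat{G}(\C)$ and is identified with that of $\pi_v$. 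Because the relative Weyl groups $W_d$ and $W'_d$ coincide (the central $\Gm$ does not affect the root system) and the surjection $\Q[X_*(T'_d)]^{W_d}\twoheadrightarrow\Q[X_*(T_d)]^{W_d}$ obtained by restriction is a section-admitting $\Q$-algebra map, the images of the two Satake characters in $\C$ coincide, so one lies in $E$ iff the other does.

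The main subtlety is the last step, where one has to match the Satake isomorphisms for $G$ and $G'$ and identify the restriction map on invariant function algebras cleanly; this is a direct inspection of the formula in \S4.2 of \cite{cartier:corvallis}, relying on the equality of relative root systems and Weyl groups for $G$ and $G'$ noted above. Everything else is formal: local Hilbert 90, functoriality of local Langlands through a closed immersion of $L$-groups, and the abstract characterization of ``defined over $E$'' in Lemma~\ref{definedoverE}.
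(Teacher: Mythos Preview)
Your proposal is correct and follows essentially the same decomposition as the paper: reduce to local statements, handle $C$-arithmeticity via Lemma~\ref{definedoverE}, and handle algebraicity by tracking $\lambda_\sigma$ under $X^*(T)\hookrightarrow X^*(T')$.

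There is one genuine difference of method, in the $L$-arithmeticity step. You propose to work on the dual side and verify directly from Cartier's formula that the Satake isomorphisms for $G$ and $G'$ are compatible with the surjection $\Q[X_*(T'_d)]^{W_d}\twoheadrightarrow\Q[X_*(T_d)]^{W_d}$, flagging this as the main subtlety. The paper instead avoids this direct inspection by invoking Theorem~2.9 of~\cite{MR0579172} (as in the proof of Lemma~\ref{Larith_well_defined}): the $W_d$-orbit of characters of $T(k)$ attached to $\pi$ by Satake is characterised intrinsically as the set of characters for which $\pi$ occurs as a subquotient of the corresponding principal series. Since $T'(k)\to T(k)$ is surjective (Hilbert~90) and parabolic induction for $G'$ from a character trivial on the central $\Gm$ is the pullback of parabolic induction for $G$, the orbit for $\pi'$ is exactly the pullback of the orbit for $\pi$, and the compatibility of Satake parameters follows without opening up the Satake formula. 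Your route works too, but the Bernstein--Zelevinsky argument is cleaner and sidesteps the computation you identify as delicate.

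A minor remark on the archimedean side: you phrase the argument via functoriality of archimedean local Langlands through ${}^LG\hookrightarrow{}^LG'$, whereas the paper works directly with infinitesimal characters and the Harish-Chandra isomorphism to see that $\lambda_\sigma\mapsto\lambda'_\sigma$. These are equivalent, since (as noted in \S\ref{locallanglandsatinfinity}) the $W$-orbit of $\lambda_\sigma$ is recovered from the infinitesimal character.
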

\begin{proof}
Let us start with the $L$- and $C$-algebraicity assertions. These
assertions follow from purely local assertions at infinity: one
needs to check that if $k$ is an archimedean local field (a completion
of $F$ in the application), and if $\pi$ is a representation of $G(k)$,
with $\pi'$ the induced representation of $G'(k)$, then $\pi$
is $L$-algebraic (resp.\ $C$-algebraic) if and only if $\pi'$ is.
These statements can easily be checked using infinitesimal characters.
Indeed a straightforward calculation (using an explicit description of
the Harish-Chandra isomorphism) shows that if $T'$ is a maximal
torus in $G'$ over the complexes (where we base change $G'$ to
the complexes via the map $k\to\C$ induced
from $\sigma:\kbar\to\C$), and if the image of $T'$ in~$G$
is $T$ (a maximal torus
of $G$), and if $\lambda_\sigma$ and $\lambda'_\sigma$ are
the elements of $X^*(T)\otimes_\Z\C$ and $X^*(T')\otimes_\Z\C$
corresponding to $\pi$ and $\pi'$ as in \S~\ref{locallanglandsatinfinity},
then the natural map
$X^*(T)\otimes_\Z\C\to X^*(T')\otimes_\Z\C$ sends $\lambda_\sigma$
to $\lambda'_\sigma$ and both results follow easily.

It remains to prove the arithmeticity statements. Again these statements
follow from purely local assertions. Let $k$ denote a non-archimedean
local field (a non-archimedean completion of $F$ at which everything
is unramified in the application) and let $\pi$ be an irreducible smooth
representation of $G(k)$, with $\pi'$ the corresponding representation
of $G'(k)$. Assume $\pi'$ and $\pi$ (and hence $G'$ and $G$) are unramified.
The $C$-arithmeticity assertion of the Proposition follows from the
assertion that $\pi$ is defined over a subfield~$E$ of~$\C$ iff $\pi'$
is; this is however immediate from Lemma~\ref{definedoverE}.
The $L$-arithmeticity statement follows from the assertion that
the Satake parameter of $\pi$ is defined over~$E$ iff the Satake
parameter of $\pi'$ is defined over~$E$, which is then what remains to be proved.
So let $T'$ be the centralizer of a maximal split torus in $G'$
over $k$, and let $T$ be its image in $G$. Then $T'(k)\to T(k)$
is a surjection by Hilbert 90. As noted in the proof of
Lemma~\ref{Larith_well_defined}, Theorem~2.9 of~\cite{MR0579172} shows
that the $W_d$-orbit of complex
characters of $T(k)$ determined by the Satake isomorphism
applied to $\pi$ are precisely the characters for which $\pi$
occurs as a subquotient of the corresponding induced representations,
and the analogous assertion also holds for~$\pi'$. It follows
that the orbit of characters of $T'(k)$ corresponding to $\pi'$
is precisely the orbit induced from the characters of $T(k)$ via the surjection
$T'(k)\to T(k)$. This implies that the Satake parameter of $\pi'$
(thought of as a character of $\Q[X_*(T'_d)]^{W_d}$)
is induced from the Satake parameter of $\pi$ via a map
between the corresponding unramified Hecke algebras which is
in fact the obvious map, and our assertion now follows easily.
\end{proof}

\subsection{Twisting elements.}
We now explain the relationship between $L$-algebraic and
$C$-algebraic automorphic representations for a connected reductive
group~$G$ over a number field~$F$. In particular, we examine
the general question of when $L$-algebraic representations can be
twisted to $C$-algebraic representations, following an idea of Gross
(see \cite{MR1729443}). We show that in general it is always possible
to replace $G$ by a $\Gm$-extension for which this twisting is possible,
and in this way one can formulate general conjectures about the
association of Galois representations to $C$-algebraic (and
in particular cohomological by Lemma
\ref{lem:cohomogicalimpliesCalgebraic} below) automorphic
representations. As usual, let $X^*$ denote the character group in
the based root datum for~$G$, with its Galois action. Let us stress
that we always equip $X^*$ with the Galois action coming from the
construction used to define the $L$-group (which might well not be
the same as the ``usual'' Galois action on $X^*(T)$ induced by the
Galois action on $T(\overline{F})$, if $T$ is a maximal
torus in $G$ which happens to be defined over $F$).

\begin{defn}\label{twistingelement}
	We say that an element $\theta\in X^{*}$ is a \emph{twisting element} if $\theta$ is $\Gal(\overline{F}/F)$-stable and $\langle\theta,\alpha^{\vee}\rangle=1\in\Z$ for all simple coroots $\alpha^{\vee}$.
\end{defn}
For some groups $G$ there are no twisting elements; for example, if
$G=\PGL_{2}$. On the other hand, if $G$ is semi-simple and simply-connected
then half the sum of the positive roots is a twisting element.
Another case where twisting elements exist are groups~$G$
that are split and have simply-connected derived subgroup, for example $G=\GL_n$, although in this case half the sum of the
positive roots might not be in $X^*$.

If~$Q$ is the quotient of~$G$ by its derived subgroup, then
$X^*(Q)\subseteq X^*$ and the arguments in section~II.1.18 of~\cite{MR2015057}
show that $X^*(Q)=(X^*)^{W}$, where~$W$ is the Weyl group of $G_{\overline{F}}$.
Furthermore, $X^*(Q)$ is $\Gal(\overline{F}/F)$-stable, and the induced
action of $\Gal(\overline{F}/F)$ on $X^*(Q)$ is precisely the usual
action, induced by the Galois action on $Q(\overline{F})$. 

Now let $\delta$ denote half the sum of the positive roots of $G$. 
If $\delta\in X^*$ then $\delta$ is a twisting element; but in
general we only have $\delta\in\frac{1}{2}X^*$.
Let $S'$ denote the maximal split torus quotient of $G$, so
that $$X^{*}(S')=(X^{*})^{W,\Gal(\overline{F}/F)}.$$
Then if $\theta$ is a twisting element, we see
that $$\theta-\delta\in\frac{1}{2} X^{*}(S').$$ Thus we have a
character $|\cdot |^{\theta-\delta}$ of $G(F)\backslash G(\A_{F})$,
defined as the composite $$\xymatrix{G(\A_{F})\ar[r]&
  S'(\A_{F})\ar[r]^{2(\theta-\delta)}&
  \A_{F}^{\times}\ar[r]^{|\cdot|}&\R_{>0}\ar[r]^{x\mapsto\sqrt{x}}&\R_{>0}} $$
The main motivation behind the notion of twisting elements is the
following two propositions.
\begin{prop}\label{prop:twistinglalgtocalg}If $\theta$ is a twisting element, then an automorphic representation $\pi$ is $C$-algebraic if and only if $\pi\otimes|\cdot |^{\theta-\delta}$ is $L$-algebraic.
\end{prop}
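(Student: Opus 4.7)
The plan is a direct computation of how twisting by $|\cdot|^{\theta-\delta}$ affects the archimedean Langlands parameters, comparing them against the defining conditions for $L$-\ and $C$-algebraicity.

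First I would identify, at each infinite place $v$ of $F$, the local Langlands parameter of the character $|\cdot|^{\theta-\delta}$. By construction this character is $g\mapsto|\xi(g)|_v^{1/2}$, where $\xi=2(\theta-\delta)$ is viewed as an algebraic character $G\to\Gm$ defined over $F$ (the fact that $\xi\in X^*(S')$ guarantees it is defined over $F$). The Weil parameter of $|\cdot|_v^{1/2}:F_v^\times\to\C^\times$ attached to $\GL_1/F_v$ has $\lambda_\sigma=\lambda_\tau=1/2\in X^*(\GL_1)\otimes\C$ by a direct calculation from the archimedean local Langlands correspondence for tori (spelled out in \S\ref{locallanglandsatinfinity}). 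Composing with the cocharacter $\hat\xi\in X_*(\hat T)=X^*(T)$ dual to $\xi$---which is both $W$- and $\Gal$-invariant since $\xi\in X^*(S')=(X^*)^{W,\Gal}$---one finds that the Langlands parameter of $|\xi|_v^{1/2}$, viewed into $\LG(\C)$, has $\lambda_\sigma=\lambda_\tau=\tfrac12\xi=\theta-\delta$.

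Second, I would appeal to compatibility of the archimedean local Langlands correspondence with twisting by characters of $G(F_v)$ (the archimedean analogue of Remark \ref{unramlltwist}): the parameter attached to $\pi_v\otimes|\xi|_v^{1/2}$ is the pointwise product of $r_{\pi_v}$ with the parameter of $|\xi|_v^{1/2}$ computed above. Because $\theta-\delta$ is a $W$-invariant element of $X^*(T)\otimes\Q$, the corresponding shift of $(\lambda_\sigma,\lambda_\tau)$ is well-defined independently of the Weyl-group ambiguity, and one obtains
\[\lambda_\sigma(\pi_v\otimes|\cdot|^{\theta-\delta})=\lambda_\sigma(\pi_v)+(\theta-\delta)\]
(and similarly for $\lambda_\tau$) in $X^*(T)\otimes\C$.

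Finally, unwinding the definitions: $\pi\otimes|\cdot|^{\theta-\delta}$ is $L$-algebraic iff $\lambda_\sigma(\pi_v)+\theta-\delta\in X^*(T)$ for every infinite place $v$, whereas $\pi$ is $C$-algebraic iff $\lambda_\sigma(\pi_v)-\delta\in X^*(T)$ for every infinite place. Since $\theta\in X^*(T)$ by hypothesis, adding or subtracting $\theta$ preserves membership in $X^*(T)$, so the two conditions coincide and the proposition follows. The only technical point requiring care is step two: verifying that archimedean LLC really does transform twists by characters of $G(F_v)$ into pointwise multiplication by the corresponding abelian parameter; this is folklore but amounts to the explicit description of LLC for real and complex tori combined with the fact that $|\xi|_v^{1/2}$ factors through the central image of $S'$, so that its parameter lands in the centraliser of $r_{\pi_v}$.
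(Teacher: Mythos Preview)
Your proposal is correct and follows essentially the same route as the paper's proof: both reduce to a local statement at each infinite place, identify the parameter of the twisting character as shifting $\lambda_\sigma$ by $\theta-\delta$, and then observe that since $\theta\in X^*(T)$ the conditions $\lambda_\sigma-\delta\in X^*(T)$ and $\lambda_\sigma+(\theta-\delta)\in X^*(T)$ are equivalent. The only difference is that where you label the twisting compatibility ``folklore'' and sketch why it holds, the paper pins it down precisely as condition~10.3(2) of Borel's Corvallis article (via the auxiliary central extension of~\S10.2 there), tracing the character $|\cdot|^{\theta-\delta}$ to a class $\alpha\in H^1(W_k,Z_L)$ with $Z_L$ the centre of $\widehat{G}$ and then invoking Borel's statement that twisting $\pi$ by $\pi_\alpha$ corresponds to multiplying $r_\pi$ by the associated central cocycle.
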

\begin{proof} This is a consequence of condition 10.3(2)
  of~\cite{MR546608}, although formally one has to
  ``reverse-engineer'' the construction of (using the notation of
  \S10.2 of loc.\ cit.) $\alpha\mapsto\pi_\alpha$.  We sketch the
  argument using the notation there. The question is local at each
  infinite place, so let $k$ denote a completion of~$F$ at an infinite
  place. Let $\widetilde{G}$ be the central extension of~$G$
described in \S10.2 of~\cite{MR546608}. The character
  $|\cdot|^{\theta-\delta}$ induces a character of $G(k)$ and of
  $\widetilde{G}(k)$. If $Q$ denotes the maximal torus quotient of
  $\widetilde{G}$ then this character can be extended to a character
  of $Q(k)$. The associated element of $H^1(W_k,\widehat{Q})$ is the
  image of an element $\alpha\in H^1(W_k,Z_L)$, with $Z_L$ the centre
  of $\widehat{G}$, and one checks easily that the character
  $\pi_\alpha$ of $G(k)$ in \S10.2 of~\cite{MR546608} coincides with
  $|\cdot|^{\theta-\delta}$.  If $T_0$ is a maximal torus in
  $\widetilde{G}$, and $T$ is its image in $G$, then the restriction
  of $\alpha$ to $\overline{k}^\times$ is a $Z_L$-valued character
  which, when considered as a $\widehat{T}_0$-valued character of
  $\overline{k}^\times$, has image in $\widehat{T}$ and which (via an
  easy diagram chase) coincides with the restriction to
  $\overline{k}^\times$ of the cohomology class associated via local
  Langlands to the restriction of $|\cdot|^{\theta-\delta}$ to
  $T(k)$. Hence $a$ on $\overline{k}^\times$ is the composite of the
  norm map down to $\R_{>0}$, the square root map, and the cocharacter
  of $\widehat{T}$ associated to $2(\theta-\delta)$.  Twisting $\pi$ by
  $|\cdot|^{\theta-\delta}$ corresponds to twisting $r_{\pi}$ by $a$ by
  10.3(2) of~\cite{MR546608} and the result follows easily.
\end{proof}

\begin{prop}\label{prop:twistinglarithtocarith}If $\theta$ is a
  twisting element, then $\pi$ is $C$-arithmetic if and only if
  $\pi\otimes|\cdot |^{\theta-\delta}$ is $L$-arithmetic.
\end{prop}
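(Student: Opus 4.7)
The plan is to reduce the claim to a purely local assertion at each finite unramified place, and then use the explicit Satake formula together with Remark~\ref{unramlltwist} to compare the two $\Q$-structures on the spherical Hecke algebra.

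The character $\chi := |\cdot|^{\theta-\delta}$ factors as $G(\A_F) \to S'(\A_F) \to \R_{>0}$, so at every finite place $v$ its local component $\chi_v$ is unramified (trivial on the image of the hyperspecial $K_v$ in $S'(F_v)$). Hence the same finite bad set $S$ of Definitions~\ref{defn:L-arithmetic} and~\ref{defn:C-arithmetic} serves for both $\pi$ and $\pi \otimes \chi$, and the proposition reduces to the local statement: for each $v \notin S$, the representation $\pi_v$ is defined over $E$ in the sense of Definition~\ref{definedoverEdef}(i) if and only if the Satake parameter of $\pi_v \otimes \chi_v$ is defined over $E$ in the sense of Definition~\ref{definedoverEdef}(ii).

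For the local statement, let $\mu \in X_*(T_d)$ be a dominant cocharacter, set $T_\mu := \mathbf{1}_{K_v \mu(\varpi_v) K_v}$, and let $V_\mu$ denote the irreducible representation of $\Gdual$ of highest weight $\mu$. The Macdonald/Satake formula gives
$$\pi_v(T_\mu) \;=\; q_v^{\langle\delta,\mu\rangle}\Bigl(\tr_{V_\mu}(S_{\pi_v}) + \sum_{\lambda<\mu} c_{\mu\lambda}\, \tr_{V_\lambda}(S_{\pi_v})\Bigr)$$
for certain $c_{\mu\lambda} \in \Q$. The characters $\tr_{V_\mu}$ form a $\Z$-basis of $\Z[X_*(T_d)]^W$; and since $\langle\delta,\alpha^\vee\rangle = 1$ on every simple coroot, $\langle\delta,\mu-\lambda\rangle \in \Z$ whenever $\mu - \lambda$ lies in the positive coroot lattice, so $q_v^{\langle\delta,\mu-\lambda\rangle} \in \Q^\times$. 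An induction on $\mu$ along this upper-triangular transition then shows that $\pi_v$ is defined over $E$ if and only if $q_v^{\langle\delta,\mu\rangle}\tr_{V_\mu}(S_{\pi_v}) \in E$ for every dominant $\mu$.

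Finally, Remark~\ref{unramlltwist} applied with $\xi = 2(\theta-\delta)$ and $s = 1/2$ yields $S_{\pi_v \otimes \chi_v} = \widehat{2(\theta-\delta)}(q_v^{-1/2}) \cdot S_{\pi_v}$ in $\Tdual(\C)$. The crucial observation is that $\theta - \delta$ is $W$-invariant: the defining identities $\langle\theta,\alpha^\vee\rangle = 1 = \langle\delta,\alpha^\vee\rangle$ on each simple coroot force $\theta - \delta$ to vanish on the coroot lattice. Consequently every weight $\lambda$ of $V_\mu$ satisfies $\langle\theta-\delta,\lambda\rangle = \langle\theta-\delta,\mu\rangle$ (because $\mu - \lambda$ is a sum of simple coroots), and summing over weights gives $\tr_{V_\mu}(S_{\pi_v \otimes \chi_v}) = q_v^{\langle\delta-\theta,\mu\rangle}\tr_{V_\mu}(S_{\pi_v})$. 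Thus the Satake parameter of $\pi_v \otimes \chi_v$ is defined over $E$ iff $q_v^{\langle\delta-\theta,\mu\rangle}\tr_{V_\mu}(S_{\pi_v}) \in E$ for every $\mu$; comparing with the previous criterion, the two conditions differ by the factor $q_v^{\langle\theta,\mu\rangle}$, which lies in $\Q^\times \subseteq E^\times$ because $\theta \in X^*$ pairs integrally with $\mu \in X_*$. The conditions are therefore equivalent. The main obstacle is the $\frac{1}{2}$-integer bookkeeping around the $\delta^{1/2}$-twist in the normalized Satake isomorphism and the verification that $W$-invariance of $\theta - \delta$ is exactly what forces the uniform multiplicative shift on $\tr_{V_\mu}$.
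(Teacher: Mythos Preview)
Your proof is correct and follows the same strategy as the paper's: reduce to a local statement at unramified finite places, then combine the explicit form of the Satake isomorphism with the effect of twisting on Satake parameters (your appeal to Remark~\ref{unramlltwist}). The paper packages the endgame slightly differently: instead of the basis $\{\tr_{V_\mu}\}$ on the dual side, it introduces the automorphism $i$ of $H_\C(T(k),{}^oT)^{W_d}$ sending $[{}^oT t\, {}^oT]\mapsto\chi(t)[{}^oT t\, {}^oT]$, observes $m_{\pi\otimes\chi}=m_\pi\circ i$, and reads off directly from Cartier's formula~(19) that the composite $i\circ S$ carries $H_\Q(G(k),K)$ isomorphically onto $\Q[X_*(T_d)]^{W_d}$. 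This is the same computation viewed from the torus side rather than the dual side. One small advantage of the paper's phrasing is that it applies uniformly to unramified quasi-split $G$, whereas your $\tr_{V_\mu}$ basis of $\Gdual$-characters is most natural in the split case; for general unramified $G$ one would replace $\tr_{V_\mu}$ by traces on $\Gdual\rtimes\Frob$, or more simply by $W_d$-orbit sums in $\Q[X_*(T_d)]$, after which your $W$-invariance argument for $\theta-\delta$ goes through unchanged.
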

\begin{proof} Again this is a local issue: by
Definitions \ref{defn:L-arithmetic}
and \ref{defn:C-arithmetic} it suffices to
check that if $k$ (a completion of~$F$)
is a non-archimedean local field, if $\chi$ denotes the restriction
of $|\cdot|^{\theta-\delta}$ to $G(k)$ and if $\pi$ is an unramified representation
of $G(k)$, then $\pi$ is defined over a subfield $E$ of $\C$ iff
$\pi\otimes\chi$ has Satake parameter defined over
the same subfield $E$. This is relatively easy to check: we sketch
the details (using the notation of section~\ref{satakeparams}).
Let $T$ be a maximal torus of $G/k$, with maximal
compact subgroup ${}^oT$.
Then $\chi$ induces an
automorphism $i$ of $H_{\C}(T(k),{}^oT)$ sending $[{}^oTt{}^oT]$
to $\chi(t)[{}^oTt{}^oT]$, and $i$ commutes with the action of
the Weyl group $W_d$ and hence induces an automorphism~$i$ 
of the $W_d$-invariants of this complex Hecke algebra.
If $m_\pi$ is the complex character of $H(T(k),{}^oT)^{W_d}$ associated
to $\pi$ and $m_{\pi\otimes\chi}$ is the character associated to $\pi\otimes\chi$
then one checks easily that $m_{\pi\otimes\chi}=m_\pi\circ i$. The other
observation we need is that if $K$ is a hyperspecial maximal compact
subgroup of $G(k)$ and if $S$ denotes the Satake isomorphism
$S:H_{\C}(G(k),K)\to H_{\C}(T(k),{}^oT)^{W_d}$ then
$i\circ S$ maps $H_{\Q}(G(k),K)$ into $H_{\Q}(T(k),{}^oT)$ (this
follows immediately from formula (19) of section 4.2
of~\cite{cartier:corvallis}), and
hence into $H_{\Q}(T(k),{}^oT)^{W_d}$, and an
injection between $\Q$-vector spaces which becomes an isomorphism
after tensoring with $\C$ must itself be an isomorphism. Hence
$i\circ S:H_{\Q}(G(k),K)\cong H_{\Q}(T(k),{}^oT)^{W_d}$ and
now composing with $m_{\pi}$ the result follows easily.
\end{proof}
Thus for groups with a twisting element, our $L$-notions and $C$-notions
can be twisted into each other.

\subsection{Adjoining a twisting element.}

If~$G$ has a twisting element then we have just seen that one
can, by twisting, pass between our $L$- and $C$- notions.
What can one do when $G$ has no twisting element (for example
if $G=\PGL_2$)? In this case we will show that~$G$ has a $\Gm$-extension
$\widetilde{G}$ which \emph{does} have a twisting element. Here we use
some ideas that we learnt from reading a 2007 letter from Deligne to Serre;
we thank Dick Gross for drawing our attention to this letter.
We remark that previous versions of this manuscript contained a slightly
messier construction involving a two-step process, reducing 
first via a $z$-extension to the the case where $G$ had simply-connected derived
subgroup and then making another extension from there (this procedure
of reducing to the case of a simply-connected derived subgroup seems
to be often used in the literature but it turned out not to be necessary
in this case).

\begin{prop}\label{prop:newgoodcover}

(a) There is a $\Gm$-extension
$$1\to\Gm\to \widetilde{G}\to G\to 1 $$
such that $\widetilde{G}$ has a canonical twisting element~$\theta$.

(b) If $G$ has a twisting element then $\widetilde{G}\cong G\times\Gm$.
More generally, there is a natural bijection between the set
of splittings of $1\to\Gm\to\widetilde{G}\to G\to 1$ and the
set of twisting elements of~$G$ (and in particular, the sequence
splits over~$F$ if and only if~$G$ has a twisting element).
\end{prop}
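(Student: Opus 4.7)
The plan is to construct $\widetilde{G}$ by directly specifying its based root datum with $\Gal(\overline{F}/F)$-action and then reading off the claimed properties from the combinatorics; the only input beyond $\Psi_0(G)$ is the canonical element $\delta\in X^*\otimes\Q$, half the sum of the positive roots. First I would set $\theta:=(\delta,1)\in(X^*\otimes\Q)\oplus\Z$ and define $\widetilde{X}^*:=X^*+\Z\theta$ inside $(X^*\otimes\Q)\oplus\Z$. Since $2\delta\in X^*$ this is an integral lattice, and as abelian groups $\widetilde{X}^*=X^*\oplus\Z\theta$, sitting in an exact sequence $0\to X^*\to\widetilde{X}^*\to\Z\to 0$. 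The crucial observation is that the $\Gal(\overline{F}/F)$-action on $X^*$ coming from $\Psi_0(G)$ permutes the simple roots, hence the positive roots, so fixes $\delta$; combined with the trivial action on the second summand this makes $\theta$ Galois-stable. The dual lattice $\widetilde{X}_*:=\Hom(\widetilde{X}^*,\Z)$ fits in the dual sequence $0\to\Z\to\widetilde{X}_*\to X_*\to 0$, and for each $\alpha\in\Delta^*$ I let $\widetilde{\alpha}^\vee$ be the unique lift of $\alpha^\vee$ to $\widetilde{X}_*$ satisfying $\langle\theta,\widetilde{\alpha}^\vee\rangle=1$; the simple roots are taken to be $\widetilde{\Delta}^*:=\Delta^*\subset X^*\subset\widetilde{X}^*$.

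I would then verify that $(\widetilde{X}^*,\widetilde{\Delta}^*,\widetilde{X}_*,\widetilde{\Delta}_*)$ is a based root datum compatible with the Galois action; the only non-routine axiom is $s_{\widetilde{\alpha}}(\theta)=\theta-\alpha\in\widetilde{X}^*$, while equivariance of the assignment $\alpha\mapsto\widetilde{\alpha}^\vee$ is automatic because $\theta$ is Galois-fixed. The standard equivalence between based root data with $\Gal(\overline{F}/F)$-action and connected reductive $F$-groups (as recalled in \S\ref{sec:review and notation}) then produces $\widetilde{G}/F$. The surjection $\widetilde{X}^*\twoheadrightarrow\Z$ dualises to a cocharacter $\Gm\to\widetilde{T}\subseteq\widetilde{G}$, which is central because every root of $\widetilde{G}$ lies in $X^*$ and hence pairs to zero with this cocharacter, while $X^*\hookrightarrow\widetilde{X}^*$ dualises to $\widetilde{G}\twoheadrightarrow G$ with kernel this central $\Gm$. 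By construction $\theta$ is Galois-stable and pairs to $1$ with every simple coroot, so it is the promised canonical twisting element of $\widetilde{G}$, proving (a).

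For (b), giving a splitting of $1\to\Gm\to\widetilde{G}\to G\to 1$ as an extension of reductive $F$-groups corresponds, under the same equivalence, to a Galois-equivariant retraction $r\colon\widetilde{X}^*\to X^*$ whose dual $r^\vee\colon X_*\to\widetilde{X}_*$ carries each simple coroot $\alpha^\vee$ of $G$ to the chosen lift $\widetilde{\alpha}^\vee\in\widetilde{X}_*$; this coroot compatibility is precisely what distinguishes a section of reductive groups from merely a section of maximal tori. Such an $r$ is determined by $\theta_1:=r(\theta)\in X^*$, and Galois-equivariance of $r$ is equivalent to $\theta_1$ being Galois-stable. A direct computation gives $r^\vee(\alpha^\vee)(\theta)=\alpha^\vee(r(\theta))=\langle\theta_1,\alpha^\vee\rangle$, so the coroot compatibility is exactly the twisting-element condition $\langle\theta_1,\alpha^\vee\rangle=1$ for every simple coroot. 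The bijection $s\leftrightarrow\theta_1$ is plainly natural, and composing any such section with the central $\Gm\hookrightarrow\widetilde{G}$ gives $\widetilde{G}\cong G\times\Gm$ whenever a twisting element exists. The main subtlety throughout is notational: one must carefully track two parametrisations of $\widetilde{X}_*$, via the ambient embedding $\widetilde{X}^*\subset(X^*\otimes\Q)\oplus\Z$ versus the internal splitting $\widetilde{X}^*=X^*\oplus\Z\theta$, and resist the temptation to mistakenly identify $\widetilde{G}$ with $G\times\Gm$ (which has the ``wrong'' simple coroots and in general admits no twisting element at all).
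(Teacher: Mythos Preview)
Your combinatorial construction of the root datum is correct and $\theta=(\delta,1)$ is indeed a twisting element, but there is a genuine gap in passing from the root datum to the group. The ``standard equivalence between based root data with $\Gal(\overline{F}/F)$-action and connected reductive $F$-groups'' that you invoke does not exist as stated: a based root datum equipped with its $\mu_G$-action determines only the \emph{quasi-split inner form}, not the group itself (inner forms share the same $\Psi_0$ with the same Galois action---this is precisely why the $L$-group depends only on the inner class, and nothing in \S\ref{sec:review and notation} claims otherwise). Your construction therefore yields a $\Gm$-extension $\widetilde{G}^{\mathrm{qs}}$ of $G^{\mathrm{qs}}$, not of the given~$G$, whereas the proposition demands an honest central extension of $G$ over~$F$ (and the paper later pulls automorphic representations back along $\widetilde{G}(\A_F)\to G(\A_F)$, so the actual group matters). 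The gap is not hard to close: since your construction keeps the roots unchanged, one has $(\widetilde{G}^{\mathrm{qs}})^{\ad}=G^{\ad}$, so the inner-twist cocycle in $H^1(F,G^{\ad})$ defining $G$ from $G^{\mathrm{qs}}$ also twists $\widetilde{G}^{\mathrm{qs}}$ to the desired $\widetilde{G}$ over~$G$; centrality of the $\Gm$ and the twisting-element property of~$\theta$ survive because inner twisting affects neither the centre nor the $\mu_G$-action on $\Psi_0$. The same remark applies to your treatment of~(b), which again leans on the non-existent equivalence.

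The paper avoids this issue entirely by building $\widetilde{G}$ geometrically: it pushes out $1\to Z\to G^{\sc}\to G^{\ad}\to 1$ along the character $\gamma:Z\to\mu_2\subset\Gm$ induced by half the sum of the positive roots on~$T^{\sc}$, and then pulls back along $G\to G^{\ad}$, realising $\widetilde{G}$ explicitly as a subquotient of $G\times G^{\sc}\times\Gm$ over~$F$. This manifestly sits over the given~$G$ regardless of inner form. Once your approach is patched it is arguably more transparent for reading off~$\theta$ and for the bijection in~(b); the paper's approach buys you an explicit $F$-group from the outset without any appeal to classification.
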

\begin{proof}

(a) Let $G^{\ad}$ denote the quotient of~$G$ by its
centre and let $G^{\sc}$ denote the simply-connected cover
of $G^{\ad}$. Over $\overline{F}$ we can choose compatible
(unnamed) Borels and tori $T^{\sc}$ and $T^{\ad}$ in $G^{\sc}$ and $G^{\ad}$,
and hence define the notion of a positive root in $X^*(T^{\sc})$
and $X^*(T^{\ad})$.

Now $G^{\sc}$ is simply-connected, and hence half the
sum of the positive roots for $G^{\sc}$ is a character~$\eta$ of $T^{\sc}$
and hence induces a character~$\gamma$ of $Z:=\ker(G^{\sc}\to G^{\ad})$. This
character $\gamma$ is independent of the notion of positivity because
all Borels in~$G^{\sc}_{\overline{F}}$ are conjugate. It takes
values in $\mu_2$ because $\eta^2$, the sum of the positive
roots, is a character of $T^{\ad}$ and is hence trivial on~$Z$.
Furthermore $\gamma$ is independent of the choice of $T^{\sc}$,
and defined over~$F$.

Pushing the diagram
$$0\to Z\to G^{\sc}\to G^{\ad}\to 0$$
out along $\gamma:Z\to\Gm$ gives us an extension
$$0\to\Gm\to G^1\to G^{\ad}\to 0$$
of groups over~$F$
and now pulling back along $G\to G^{\ad}$ gives us an extension
$$0\to\Gm\to\widetilde{G}\to G\to 0.$$
The group $\widetilde{G}$ is the extension we seek. One can
define it ``all in one go'' as a subquotient of $G\times G^{\sc}\times\Gm$:
it is the elements $(g,h,k)$ such that the images of $g$ and $h$
in $G^{\ad}$ are equal, modulo the image of the finite group~$Z$
under the map sending $z$ to $(1,z,\gamma(z))$. 

If $T^1$ and $\widetilde{T}$ are maximal tori in $G^1$ and $\widetilde{G}$
then the character groups of these tori fit into the following
Galois-equivariant commutative diagram
\[\xymatrix{
0\ar[r]&X^*(T^{\ad})\ar[r]&X^*(T^{\sc})\ar[r]&X^*(Z)\ar[r]& 0\\
0\ar[r]&X^*(T^{\ad})\ar[r]\ar@{=}[u]\ar[d]&X^*(T^1)\ar[r]\ar[u]\ar[d]&\Z\ar[r]\ar[u]^{\gamma^*}\ar@{=}[d]&0\\
0\ar[r]&X^*(T)\ar[r]&X^*(\widetilde{T})\ar[r]&\Z\ar[r]&0}
\]
where $X^*(T^1)$ can be thought
of as a pullback---the subgroup of $X^*(T^{\sc})\oplus\Z$ consisting
of elements whose images in $X^*(Z)$ coincide, and $X^*(\widetilde{T})$
can be thought of as a pushforward---the quotient of $X^*(T^1)\oplus X^*(T)$
by $X^*(T^{\ad})$ embedded anti-diagonally.

We define $\theta$ thus: there is an element $\theta^1$ of $X^*(T^1)$
whose image in $X^*(T^{\sc})$ is $\eta$, and whose image in~$\Z$ is~1. We let $\theta$ be
the image of $\theta^1$ in $X^*(\widetilde{T})$.

We claim that $\theta$ is a twisting element for $\widetilde{G}$,
and this will suffice to
prove part (a). Note that the snake lemma implies that $X^*(T^1)$
injects into $X^*(\widetilde{T})$ and hence it suffices to show that
$\theta^1$ is a twisting element for $G^1$. It is a standard fact that
$\eta$ pairs to one with each simple coroot, and it follows
immediately that $\theta^1$ pairs to one with each simple
coroot. Furthermore, $\eta$ is a Galois-stable element of
$X^*(T^{\sc})$, and Galois acts trivially on $\Z$, from which one can
deduce that Galois acts trivially on $\theta$, and so $\theta$ is
indeed a twisting element.

(b) Recall that we have a short exact sequence
$$0\to X^*(T)\to X^*(\widetilde{T})\to\Z\to 0.$$
If $t$ is a twisting element for~$G$, then $t\in X^*(T)$
can be regarded as an element of $X^*(\widetilde{T})$ whose
image in~$\Z$ is~zero. Recall that $\theta\in X^*(\widetilde{T})$
from (a) is a twisting element for~$\widetilde{G}$, whose
image in~$\Z$ is~1. Now consider the difference $\chi:=\theta-t$.
This is an element in~$X^*(\widetilde{T})$ which is Galois
stable and pairs to zero with each simple coroot. Hence if~$\widetilde{Q}$
denotes the maximal split torus quotient of~$\widetilde{G}$, then $\chi$
is an element of the subgroup~$X^*(\widetilde{Q})$ of~$\widetilde{T}$.
In particular, $\chi$ induces a map $\widetilde{G}\to\Gm$ defined
over~$F$, and the composite $\Gm\to\widetilde{G}\to\Gm$ is the
identity map. It follows easily that the induced
map $\widetilde{G}\to G\times\Gm$ induced by $\chi$ and
the canonical map $\widetilde{G}\to G$ is an isomorphism.

This same argument shows that there is a bijection between the
splitting elements for~$G$ and the splittings of the short
exact sequence. To give a splitting of the exact sequence
is to give a map $\widetilde{G}\to\Gm$ such that the induced map
$\Gm\to\widetilde{G}\to\Gm$ is the identity. If~$\chi:\widetilde{G}\to\Gm$ is
such a map then $\chi$ gives an element
of~$X^*(\widetilde{T})$ which is Galois stable, pairs to zero with
each simple coroot, and whose image in~$\Z$ is~1. Conversely
any such element gives a splitting. Now one checks easily that
$\chi\in X^*(\widetilde{T})$ has these properties then
$\theta-\chi$ ($\theta$ our fixed twisting element for~$\widetilde{G}$
coming from part (a)) has image zero in~$\Z$, so can be regarded
as an element of~$X^*(T)$ that can easily be checked to be a twisting
element for~$G$, and conversely for any twisting element~$t\in X^*(T)$,
we have already seen that $\theta-t$ gives a splitting.
\end{proof}

The importance of $\widetilde{G}$ is that if $\pi$ is a $C$-algebraic
representation for $G$, we can pull it back to $\widetilde{G}$
and then twist as in Proposition~\ref{prop:twistinglalgtocalg}
to get an $L$-algebraic representation, for which we predict
the existence of a Galois representation---but into the $L$-group
of $\widetilde{G}$ rather than the $L$-group of~$G$. The construction
of $\widetilde{G}$ is completely canonical, and this motivates
the following definition:
\begin{defn} The \emph{$C$-group of~$G$} is defined
to be the $L$-group of $\widetilde{G}$. We will denote this group by
${}^CG$.
\end{defn}
The $C$-group is ``as functorial as the $L$-group is'',
and naturally receives the Galois representations conjecturally
(and in some cases provably) attached to $C$-algebraic automorphic
representations for~$G$. One can think of the $C$-group as doing,
in a canonical way, the job of unravelling all the square roots that
appear in the Satake isomorphism normalised a la Langlands when
applied to a cohomological representation (for we shall see later
on in Lemma~\ref{lem:cohomogicalimpliesCalgebraic} that cohomological
representations are $C$-algebraic).
Note that the dimension of the $C$-group
is one more than the dimension of the $L$-group, but this extra
degree of freedom is cancelled out by the fact that the inclusion
$\Gm\to\widetilde{G}$ gives rise to a map $d:{}^CG\to\Gm$
and we will see later on that our conjectures imply that
if $\pi$ is $C$-algebraic for $G$ then the associated
Galois representation, when composed with the map
$d:{}^CG(\Qpbar)\to\Gm(\Qpbar)$, is always the cyclotomic character.

It would be nice to see in a concrete manner how the $C$-group
is related to the $L$-group and we achieve this in the next
proposition. Let $\chi$ denote the sum of the positive roots
for~$G$ (after choosing some Borel and torus); then $\chi$
can be thought of as a cocharacter of a maximal torus $\widehat{T}$
of~$\widehat{G}$, that is, a map $\Gm\to\widehat{T}$. Set
$e=\chi(-1)$.
\begin{prop}\label{kernelorder2} The element~$e$ is a central element of $\widehat{G}$
and is independent of all choices. There is a canonical surjection
$$\widehat{G}\times\Gm\to\widehat{\widetilde{G}}$$
with kernel central and of order~2, generated by $(e,-1)$.
This surjection is equivariant for the action of Galois
used to define the $L$-groups of~$G$ and~$\widetilde{G}$.
\end{prop}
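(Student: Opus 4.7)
The approach is to identify the dual group $\widehat{\widetilde{G}}$ with the quotient $(\widehat{G} \times \Gm)/\langle(e, -1)\rangle$ by constructing an explicit map on character lattices of maximal tori and verifying it is compatible with the root data on both sides. First, the claimed properties of $e$ come from a direct calculation: for any simple root $\alpha$ of $\widehat{G}$, which is a simple coroot $\alpha^\vee$ of~$G$, one has $\langle \alpha, \chi\rangle = \langle \alpha^\vee, 2\delta\rangle = 2$, where $\delta$ is half the sum of positive roots of~$G$; thus $\alpha(e) = (-1)^2 = 1$, so $e$ lies in the kernel of every simple root of $\widehat{G}$ and is therefore central, and $e^2 = \chi(1) = 1$. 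Different choices of Borel or torus produce $\chi$'s in the same Weyl orbit, but a central element is Weyl-fixed, so $e$ is independent of these choices. Galois permutes the simple roots up to the Weyl group, hence preserves $\chi$ modulo~$W$, and centrality again forces $e$ to be Galois-fixed.

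To construct the map, let $\mu \in X_*(\widetilde{T})$ be the cocharacter from $\Gm \hookrightarrow \widetilde{T}$, so that $0 \to \Z\mu \to X_*(\widetilde{T}) \to X_*(T) \to 0$ is exact, and let $\theta \in X^*(\widetilde{T})$ be the canonical twisting element of Proposition~\ref{prop:newgoodcover}(a). The key object is $\xi := 2\theta - \chi \in X^*(\widetilde{T})$, and its crucial property is Weyl-invariance: for a simple reflection $s_\alpha$, the identities $\langle\theta, \alpha^\vee\rangle = 1$ and $\langle\chi, \alpha^\vee\rangle = 2$ give $s_\alpha\theta = \theta - \alpha$ and $s_\alpha\chi = \chi - 2\alpha$, whence $s_\alpha\xi = \xi$. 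Consequently $\xi$ pairs to zero with every coroot of $\widetilde{G}$, while $\langle\xi, \mu\rangle = 2$, since $\theta$ restricts to the identity character on the central $\Gm$ and $\chi \in X^*(T)$ restricts trivially.

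Now define $\phi\colon X_*(\widetilde{T}) \to X_*(T) \oplus \Z$ by $\phi(\nu) = (\overline{\nu}, \langle\xi, \nu\rangle)$, where $\overline{\nu}$ is the image of $\nu$. The relation $\langle\xi, \mu\rangle = 2$ and the identity $\langle\xi, \nu\rangle \equiv -\langle\chi,\overline{\nu}\rangle \equiv \langle\chi,\overline{\nu}\rangle \pmod 2$ show that $\phi$ is injective with image exactly the index-$2$ sublattice
\[ L := \{(\lambda, n) \in X_*(T) \oplus \Z : n \equiv \langle\chi, \lambda\rangle \pmod{2}\}. \]
The Weyl-invariance of $\xi$ ensures $\phi$ sends each root $\alpha^\vee_{\widetilde{G}}$ of $\widehat{\widetilde{G}}$ to $(\alpha^\vee, 0)$, a root of $\widehat{G} \times \Gm$; dually, the natural inclusion $X^*(T) \hookrightarrow X^*(\widetilde{T})$ sends each coroot $(\alpha, 0)$ of $\widehat{G}\times\Gm$ to the coroot $\alpha$ of $\widehat{\widetilde{G}}$. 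Hence the based root data of $(\widehat{G}\times\Gm)/\langle(e,-1)\rangle$ and of $\widehat{\widetilde{G}}$ are canonically isomorphic, yielding the required surjection of algebraic groups.

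Finally, the kernel on maximal tori is $\Hom((X_*(T)\oplus\Z)/L, \Gm)$, which has order~$2$, and $(e, -1)$ lies in it: pairing against any $(\lambda, n) \in L$ gives $\lambda(e)(-1)^n = (-1)^{\langle\lambda, \chi\rangle + n} = 1$, because $n + \langle\chi, \lambda\rangle \equiv 2\langle\chi, \lambda\rangle \equiv 0 \pmod 2$; since $(e, -1) \neq (1, 1)$, it generates the kernel, and centrality follows from centrality of $e$. Galois-equivariance is automatic, as $\phi$ is built from the Galois-fixed ingredients $\theta$ and $\chi$. The main obstacle in this plan is isolating the correct functional $\xi = 2\theta - \chi$ and establishing its Weyl-invariance, since this identity is precisely what makes $\phi$ compatible with the root data and allows the map of tori to extend to a morphism of reductive groups.
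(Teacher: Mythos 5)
Your proof is correct, and it arrives at the same isogeny as the paper but by a genuinely different route. The paper works at the level of algebraic groups over $F$: using the pushout description of $\widetilde{G}$ it constructs a character $\xi\colon\widetilde{G}\to\Gm$ restricting to $\lambda\mapsto\lambda^2$ on the central $\Gm$, observes that $(c,\xi)\colon\widetilde{G}\to G\times\Gm$ is an isogeny with kernel of order~$2$, and then dualises, pinning down the kernel of the dual surjection by checking that $(\chi,1)\mapsto 2\theta$ on cocharacter lattices and evaluating at $-1$; equivariance is immediate because $(c,\xi)$ is defined over~$F$. You never touch the group-level construction of $\widetilde{G}$: you isolate the character $\xi=2\theta-\chi\in X^*(\widetilde{T})$ (which is precisely the character of the paper's $\xi$, as the paper itself observes when it identifies $|\cdot|^{\theta-\delta}$ with $|\xi(\cdot)|^{1/2}$), establish its Weyl-invariance and the pairings $\langle\xi,\alpha^\vee\rangle=0$ and $\langle\xi,\mu\rangle=2$, and use these to write down the dual lattice map, verifying by hand that it is an isogeny of based root data with the prescribed index-$2$ image; the surjection of dual groups then comes from the isogeny (existence/uniqueness) theorem for reductive groups, and the kernel generator $(e,-1)$ from the same parity computation the paper does in the other direction. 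What your route buys: it uses only that $\widetilde{G}$ is a $\Gm$-extension with a twisting element $\theta$ pairing to~$1$ with the central cocharacter (not the explicit $G^{\sc}$-pushout), it makes Galois-equivariance transparent since everything lives in the based root datum where that action is defined, and it spells out the centrality and independence-of-choices of $e$, which the paper's proof leaves implicit. What it costs: you must invoke the isogeny theorem (with pinnings) to promote the lattice map to an actual morphism $\widehat{G}\times\Gm\to\widehat{\widetilde{G}}$, whereas the paper gets that morphism, its canonicity, and its equivariance for free by dualising an $F$-morphism of the original groups.
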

\begin{proof}
We have a canonical map $c:\widetilde{G}\to G$; let us beef
this up to an isogeny $\widetilde{G}\to G\times\Gm$ with
kernel of order~2. To do this we need to construct an appropriate
map $\xi:\widetilde{G}\to\Gm$, which we do thus: recall that
the intermediate group $G^1$ used in the construction of $\widetilde{G}$
was a push-out of $G^{\sc}\times\Gm$ along~$Z$; we define a map
$G^{\sc}\times\Gm\to\Gm$ by sending $(g,\lambda)$ to $\lambda^2$;
the image of~$Z$ in~$\Gm$ has order at most~2, and thus this
induces a map $G^1\to\Gm$ and hence a map $\xi:\widetilde{G}\to\Gm$.
The restriction of $\xi$ to the subgroup $\Gm$ of~$\widetilde{G}$
is the map $\lambda\mapsto\lambda^2$, and hence the kernel
of the induced map $(c,\xi):\widetilde{G}\to G\times\Gm$
has order~2 and hence this map is an isogeny for dimension reasons.
This isogeny induces a dual isogeny
$$\widehat{G}\times\Gm\to\widehat{\widetilde{G}}$$
with kernel of order~2, and so to check that the kernel
is generated by $(e,-1)$ it suffices to prove that $(e,-1)$
is in the kernel. But one easily checks that $(\chi,1)$ maps to $2\theta$ under
the natural map \[X_*(\widehat{T})\oplus\Z\to
X_*(\widehat{\widetilde{T}})\]
induced by the isogeny, and the result follows immediately by
evaluating this cocharacter at $-1$. Furthermore $(c,\xi)$
is defined over~$F$ so the dual isogeny commutes with the Galois
action used to define the $L$-groups.
\end{proof}

The interested reader can use the preceding Proposition to
compute examples of $C$-groups.
For example the $C$-group
of $(\GL_n)_F$ is isomorphic to the group
$(\GL_n\times\GL_1)_{\Qbar}\times\Gal(\overline{F}/F)$,
as $e=(-1)^{n-1}$ and the surjection $\GL_n\times\GL_1\to\GL_n\times\GL_1$
sending $(g,\mu)$ to $(g\mu^{n-1},\mu^2)$ has kernel $(e,-1)$.
As another example, the $C$-group of $(\PGL_2)_F$ is isomorphic
to $(GL_2)_{\Qbar}\times\Gal(\overline{F}/F)$ because the
kernel of the natural map $\SL_2\times\GL_1\to\GL_2$ is $(e,-1)$.
Note however that the $C$-group of $(\PGL_n)_F$ is not in
general $(\GL_n)_{\Qbar}\times\Gal(\overline{F}/F)$;
its connected component is $(\SL_n\times\GL_1)/\langle(-1)^{n-1},-1\rangle$,
which is isomorphic to $\SL_n\times\GL_1$ if $n$ is odd,
and to a central extension of $\GL_n$ by a cyclic group
of order $n/2$ if~$n$ is even.

One can use the above constructions and Conjecture~\ref{conj:existence of Galois
  representations} to formulate a conjecture associating ${}^CG$-valued Galois
representations to $C$-algebraic automorphic representations (
and hence, by Lemma
\ref{lem:cohomogicalimpliesCalgebraic} below, to cohomological
automorphic representations)
for an arbitrary connected reductive group~$G$ over a number field.
One uses Lemma~\ref{z-extension_preserves_stuff} to pull the $C$-algebraic
representation back to a $C$-algebraic representation on~$\widetilde{G}$,
twists using $\theta$ and the recipe in the statement of
Proposition~\ref{prop:twistinglalgtocalg} to get
an $L$-algebraic representation on~$\widetilde{G}$,
and then uses Conjecture~\ref{conj:existence of Galois
  representations} on this bigger group. The map denoted~$|.|^{\theta-\delta}$
in the aforementioned proposition can be checked to be the map $\widetilde{G}(\A_F)\to\R_{>0}$
sending $g$ to $|\xi(g)|^{1/2}$, with $\xi:\widetilde{G}\to\Gm$ the map defined
in the proof of Proposition~\ref{kernelorder2}. Note finally that the
map $\Gm\to\widetilde{G}$ induces a map $d:{}^CG\to\Gm$.

Unravelling, and in particular using Remark~\ref{unramlltwist}
with the~$G$ there being our~$\widetilde{G}$,
we see that Conjecture~\ref{conj:existence of Galois
representations} implies the following conjecture.
\begin{conj}\label{conj:existence of Galois representations - C-group
    version}If $\pi$ is $C$-algebraic, then there is a finite
subset $S$ of the places of $F$, containing all infinite places, all places dividing $p$, and
all places where $\pi$ is ramified, and a continuous Galois representation
$\rho_\pi=\rho_{\pi,\iota}:\Gal(\overline{F}/F)\to\CG(\Qpbar)$, which satisfies
\begin{itemize}
	\item  The composite of $\rho_\pi$ and the natural
          projection $\CG(\Qpbar)\to \Gal(\overline{F}/F)$ is the
          identity map.
        \item The composite of $\rho_\pi$ and $d$ is the
cyclotomic character.
	\item If $v\notin S$, then $\rho_\pi|_{W_{F_{v}}}$ is $\Gtildedual(\Qpbar)$-conjugate to
the representation sending $w\in W_{F_v}$ to $\iota(r_{\pi_{v}}(w)\hat{\xi}(|w|^{1/2}))$, where $\hat{\xi}$
is the map $\C^\times\to\Gtildedual(\C)$ dual to $\xi$, and where the norm on the Weil group
sends a geometric Frobenius to the reciprocal of the size of the residue field.
	\item If $v$ is a finite place dividing $p$ then
          $\rho_\pi|_{\Gal(\overline{F_v}/F_v)}$ is de
          Rham, and the Hodge--Tate cocharacter of this representation
can be explicitly read off from $\pi$ via the recipe of Remark
\ref{rem: recipe for HT weights in the C-case.} below.
          \item If $v$ is a real place, let $c_{v}\in G_{F}$ denote a
            complex conjugation at $v$, let $\lambda_{\sigma_v}$ and
$\lambda_{\tau_v}\in X^*(T)\otimes\C$ be the elements associated
to $\pi_v$ in \S\ref{locallanglandsatinfinity}, and
let $\tilde\lambda_{\sigma_v}$ and $\tilde\lambda_{\tau_v}$ denote
their images in $X^*(\widetilde{T})\otimes\C$. Then
$\tilde\lambda_{\sigma_v}+\frac{1}{2}\xi$, $\tilde\lambda_{\tau_v}+\frac{1}{2}\xi\in X^*(\widetilde{T})=X_*(\widehat{\widetilde{T}})$ and $\rho_{\pi,\iota}(c_{v})$
            is $\Gdual(\Qpbar)$-conjugate to the element
            \[\iota(\alpha_v)=\iota\left(\left(\tilde{\lambda}_{\sigma_v}+\frac{1}{2}\xi\right)(i)\left(\tilde{\lambda}_{\tau_v}+\frac{1}{2}\xi\right)(i)r_{\pi_v}(j)\right)\]
            associated to the twist of the lift of $\pi$, as in Lemma~\ref{cxconj}.
\end{itemize}	
\end{conj}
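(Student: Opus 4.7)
The plan is to derive this conjecture from Conjecture~\ref{conj:existence of Galois representations} by the unravelling already outlined in the paragraph preceding the statement. Given $\pi$ a $C$-algebraic automorphic representation of $G(\A_F)$, I first pull back via $\widetilde{G}\onto G$ to obtain an automorphic representation $\tilde\pi$ of $\widetilde{G}(\A_F)$ trivial on $\Gm(\A_F)$; by Lemma~\ref{z-extension_preserves_stuff} it is again $C$-algebraic. Using the canonical twisting element $\theta\in X^*(\widetilde{T})$ from Proposition~\ref{prop:newgoodcover}(a), Proposition~\ref{prop:twistinglalgtocalg} shows that $\tilde\pi':=\tilde\pi\otimes|\cdot|^{\theta-\delta}$ is $L$-algebraic on $\widetilde{G}$. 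Applying Conjecture~\ref{conj:existence of Galois representations} to $\tilde\pi'$ produces the candidate $\rho=\rho_{\tilde\pi',\iota}:\Gal(\overline F/F)\to{}^L\widetilde{G}(\Qpbar)={}^CG(\Qpbar)$; it remains to check the five bullet points against the properties guaranteed by that conjecture.

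The first bullet is immediate. For the second, I would verify that the restriction of $\tilde\pi'$ to the central $\Gm(\A_F)\subseteq\widetilde{G}(\A_F)$ equals the idele-norm character $|\cdot|$. Indeed $\tilde\pi$ is trivial on this $\Gm$, while the twist contributes $\lambda\mapsto|\lambda|^{2(\theta-\delta)(\lambda)/\,\cdot\,1/2}=|\lambda|$, using that $\theta$ restricts to the identity on $\Gm\subseteq\widetilde{T}$ (by construction of $\theta$ in Proposition~\ref{prop:newgoodcover}(a)) and $\delta\in X^*(T)\otimes\Q$ restricts to zero on the kernel $\Gm$ of $\widetilde{T}\to T$. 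Functoriality of the association $\pi\mapsto\rho_\pi$ along the inclusion $\Gm\to\widetilde{G}$, combined with the proven case of $\Gm$ from Section~\ref{section:tori} (which identifies the Galois representation attached to $|\cdot|$ as the cyclotomic character under our conventions on geometric Frobenius), yields $d\circ\rho=\chi_{\mathrm{cyc}}$.

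For the third bullet I would invoke Remark~\ref{unramlltwist} applied to $\widetilde{G}$, with $\xi:\widetilde{G}\to\Gm$ the character of Proposition~\ref{kernelorder2} and $s=1/2$. The key compatibility to check is the identification $|\cdot|^{\theta-\delta}=|\xi(\cdot)|^{1/2}$ asserted in the paragraph before the statement; this reduces to comparing the characters $2(\theta-\delta)$ and $\xi$ of the maximal split torus quotient of $\widetilde{G}$, which one does by checking agreement on $\Gm\subseteq\widetilde{G}$ (both give $\lambda\mapsto\lambda^2$) and on the image in $G$ (both factor through characters with the right behaviour on simple coroots). Remark~\ref{unramlltwist} then gives $r_{\tilde\pi'_v}(w)=\hat\xi(|w|^{1/2})\,r_{\tilde\pi_v}(w)$, and since $\tilde\pi$ is a pullback, $r_{\tilde\pi_v}$ is simply the composition of $r_{\pi_v}$ with $\widehat{G}\to\widehat{\widetilde{G}}$, producing the stated formula. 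The fourth bullet is the Hodge--Tate condition from Conjecture~\ref{conj:existence of Galois representations} applied to $\tilde\pi'$, with the recipe for the cocharacter to be recorded in Remark~\ref{rem: recipe for HT weights in the C-case.}.

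The fifth bullet is the most bookkeeping-intensive: I would compute the archimedean parameters of $\tilde\pi'_v$. Pullback along $\widetilde{G}\to G$ carries $\lambda_{\sigma_v}(\pi_v)$, a priori in $X^*(T)\otimes\C$, to its image $\tilde\lambda_{\sigma_v}\in X^*(\widetilde{T})\otimes\C$, and similarly for $\tau_v$; the archimedean analogue of the twist computation in Remark~\ref{unramlltwist} shows that the twist by $|\cdot|^{\theta-\delta}$ shifts both of these by $\frac{1}{2}\xi\in X^*(\widetilde{T})$. Plugging into Lemma~\ref{cxconj} applied to $\tilde\pi'_v$ produces exactly $\iota\bigl((\tilde\lambda_{\sigma_v}+\tfrac{1}{2}\xi)(i)(\tilde\lambda_{\tau_v}+\tfrac{1}{2}\xi)(i)r_{\pi_v}(j)\bigr)$; one checks moreover that $L$-algebraicity of $\tilde\pi'$ forces these shifted vectors to lie in $X^*(\widetilde{T})$, as stated. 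The main obstacle is simply disciplined tracking of the twist through each bullet and the verification of the identification $|\cdot|^{\theta-\delta}=|\xi(\cdot)|^{1/2}$; no new ideas beyond those already assembled in Sections~\ref{sec:review and notation}--\ref{section:twisting} are needed.
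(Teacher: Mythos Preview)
Your proposal is correct and follows essentially the same approach as the paper. The paper's own argument is the brief paragraph preceding the conjecture: pull back to $\widetilde{G}$ via Lemma~\ref{z-extension_preserves_stuff}, twist by $|\cdot|^{\theta-\delta}$ via Proposition~\ref{prop:twistinglalgtocalg}, apply Conjecture~\ref{conj:existence of Galois representations}, note the identification $|\cdot|^{\theta-\delta}=|\xi(\cdot)|^{1/2}$, and unravel using Remark~\ref{unramlltwist}; you have simply written this out in more detail, bullet by bullet, which is exactly what ``unravelling'' entails.
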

\begin{rem}\label{rem: recipe for HT weights in the C-case.} The recipe
for the Hodge--Tate cocharacter in the conjectures above is as follows.
As in Remark~\ref{rem: recipe for HT cocharacter}, any $j:F\to\Qbar$
gives rise to an infinite place~$w$ of~$F$ equipped with a fixed
map $F_w\to\C$, which we use to identify~$\C$ with an algebraic
closure of~$F_w$. The identity $\sigma:\C\to\C$ and the representation
$\pi_w$ give rise to $\lambda_\sigma$ as in
Remark~\ref{rem: recipe for HT cocharacter},
except that this time $\lambda_\sigma\in
(X^*(T)\otimes_\Z\frac{1}{2}\Z)/W$. We thus obtain an element
$\tilde{\lambda}_\sigma\in (X^*(\Ttilde)\otimes_\Z\frac{1}{2}\Z)/W$. Then
$\tilde{\lambda}_\sigma+\frac{1}{2}\xi\in X^*(\Ttilde)/W$, and our conjecture
is that this element $\tilde{\lambda}_\sigma+\frac{1}{2}\xi$ is the Hodge--Tate cocharacter
associated to the embedding $F_v\to\Qpbar$.
\end{rem}

We end this section by showing that the results in it imply the equivalence of Conjectures~\ref{conj:LarithmeticLalgebraic} and~\ref{conj:CarithmeticCalgebraic} (made for all groups simultaneously).
\begin{prop}\label{conjs_are_equiv} Let $G$ be a connected reductive
  group over a number field. If
  Conjecture~\ref{conj:LarithmeticLalgebraic} is true for $\widetilde{G}$ then
  Conjecture~\ref{conj:CarithmeticCalgebraic} is true
  for~$G$. Similarly if Conjecture~\ref{conj:CarithmeticCalgebraic} is
  true for $\widetilde{G}$ then
  Conjecture~\ref{conj:LarithmeticLalgebraic} is true for~$G$.
\end{prop}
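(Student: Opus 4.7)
The plan is to pass from $G$ to $\widetilde{G}$ via pullback along the projection $\widetilde{G}\to G$, use Lemma~\ref{z-extension_preserves_stuff} to transport all four properties ($L$-algebraic, $C$-algebraic, $L$-arithmetic, $C$-arithmetic), and then convert between the $C$-notions and $L$-notions on $\widetilde{G}$ by twisting with $|\cdot|^{\theta-\delta}$, where $\theta$ is the canonical twisting element of $\widetilde{G}$ provided by Proposition~\ref{prop:newgoodcover}(a).

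In detail, for the first implication let $\pi$ be an automorphic representation of $G$ and let $\pi'$ be its pullback to $\widetilde{G}$. Set $\sigma:=\pi'\otimes|\cdot|^{\theta-\delta}$, which is an automorphic representation of $\widetilde{G}$. By Proposition~\ref{prop:twistinglalgtocalg} applied to $\widetilde{G}$, the representation $\sigma$ is $L$-algebraic if and only if $\pi'$ is $C$-algebraic, and by Lemma~\ref{z-extension_preserves_stuff} the latter holds if and only if $\pi$ is $C$-algebraic. The same chain, using Proposition~\ref{prop:twistinglarithtocarith} in place of Proposition~\ref{prop:twistinglalgtocalg}, shows that $\sigma$ is $L$-arithmetic if and only if $\pi$ is $C$-arithmetic. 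Assuming Conjecture~\ref{conj:LarithmeticLalgebraic} holds for $\widetilde{G}$, the two conditions on $\sigma$ are equivalent, so the corresponding conditions on $\pi$ are too, yielding Conjecture~\ref{conj:CarithmeticCalgebraic} for $G$.

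The second implication is entirely symmetric: given $\pi$ on $G$ with pullback $\pi'$ on $\widetilde{G}$, set $\sigma:=\pi'\otimes|\cdot|^{\delta-\theta}$. Then Propositions~\ref{prop:twistinglalgtocalg} and~\ref{prop:twistinglarithtocarith} (applied in the reverse direction, which is permitted as they are ``if and only if'' statements) together with Lemma~\ref{z-extension_preserves_stuff} give that $\sigma$ is $C$-algebraic iff $\pi$ is $L$-algebraic, and $\sigma$ is $C$-arithmetic iff $\pi$ is $L$-arithmetic; Conjecture~\ref{conj:CarithmeticCalgebraic} for $\widetilde{G}$ then yields Conjecture~\ref{conj:LarithmeticLalgebraic} for $G$. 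There is essentially no hard step here: the entire proof is a bookkeeping exercise combining the compatibility of algebraicity and arithmeticity with $\Gm$-extensions (Lemma~\ref{z-extension_preserves_stuff}) with the twisting mechanism available on $\widetilde{G}$ thanks to the twisting element $\theta$ (Propositions~\ref{prop:twistinglalgtocalg} and~\ref{prop:twistinglarithtocarith}).
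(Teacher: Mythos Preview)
Your proof is correct and follows essentially the same approach as the paper's: pull back $\pi$ to $\pi'$ on $\widetilde{G}$ via Lemma~\ref{z-extension_preserves_stuff}, twist by $|\cdot|^{\theta-\delta}$ using Propositions~\ref{prop:twistinglalgtocalg} and~\ref{prop:twistinglarithtocarith}, apply the assumed conjecture on $\widetilde{G}$, and unwind. The paper only writes out the first implication and declares the second ``similar''; your explicit treatment of the second implication with the inverse twist $|\cdot|^{\delta-\theta}$ is exactly what ``similar'' means here.
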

\begin{proof} We prove the first assertion; the second one is similar.
Say $G$ is connected and reductive, and $\pi$ is $C$-arithmetic
(resp.\ $C$-algebraic). Let $\pi'$ be the pullback of $\pi$ to $\widetilde{G}$.
Then $\pi'$ is $C$-arithmetic (resp.\ $C$-algebraic) by
Lemma~\ref{z-extension_preserves_stuff}.
By Proposition~\ref{prop:twistinglarithtocarith} (resp.
Proposition~\ref{prop:twistinglalgtocalg})
$\pi'\otimes|.|^{\theta-\delta}$ is $L$-arithmetic (resp.\ $L$-algebraic).
Applying Conjecture~\ref{conj:LarithmeticLalgebraic} to
$\widetilde{G}$ we deduce that
$\pi'\otimes|.|^{\theta-\delta}$ is $L$-algebraic (resp.\ $L$-arithmetic).
Running the argument backwards now shows us that $\pi$ is $C$-algebraic
(resp.\ $C$-arithmetic).
\end{proof}
\section{Functoriality.}\label{sec:functoriality}
\subsection{}
Suppose that $G$, $G'$ are two connected reductive groups over $F$, and that we have an \emph{algebraic}
$L$-group homomorphism $$r:\LG\to\LG',$$ i.e. a homomorphism of algebraic
groups over $\Qbar$ which respects the projections to $\Gal(\overline{F}/F)$
(recall that we are using the Galois group rather than the Weil group
when forming the $L$-group). Assume that $G'$ is quasi-split over $F$. Then we have the following weak version of Langlands' functoriality conjecture (note that we are only demanding compatibility with the local correspondence at a subset of the unramified places, and at infinity).

\begin{conj}\label{conj:functoriality}If $\pi$ is an automorphic representation of $G$, then there is an automorphic representation $\pi'$ of $G'$, called a \emph{functorial transfer} of $\pi$, such that \begin{itemize}
	\item  For all infinite places $v$, and for all finite places  $v$ at which $\pi$ and $G'$ are unramified, $r_{\pi'_{v}}$ is $\Gdual'(\C)$-conjugate to $r\circ r_{\pi_{v}}$.
\end{itemize}
\end{conj}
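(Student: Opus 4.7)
The statement is a weak form of Langlands functoriality, which is open in general; I therefore describe a general strategy rather than a complete proof, and flag where no known approach succeeds. The plan is to reduce to the case $G' = \GL_n$ via a faithful representation of $\Gdual'$ and then attempt that case via converse theorems.

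First I would fix a faithful representation $\Gdual' \hookrightarrow \GL_n(\Qbar)$; composing with $r$ yields an $L$-homomorphism $\LG \to {}^L\GL_n$. If the transfer $\pi \mapsto \Pi$ to $\GL_n$ can be constructed, and if one can recognise $\Pi$ as being in the image of functoriality from $G'$, then automorphic descent (Ginzburg--Rallis--Soudry, in the classical group case) should produce $\pi'$ on $G'$ compatible with $r$ at unramified and archimedean places. For the transfer to $\GL_n$ itself I would attempt the converse theorem method of Cogdell--Piatetski-Shapiro: form candidate local factors $L(s, \pi_v, r)$ at unramified and archimedean places using Satake parameters and the local recipes of Section~\ref{sec:review and notation}, choose any admissible factors at ramified places, and verify that for every cuspidal $\tau$ on $\GL_k$ (with $k$ in the appropriate range) the Rankin--Selberg product $L(s, \pi \times \tau, r)$ is nice: entire, bounded in vertical strips, and satisfying the expected functional equation.

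The most direct tool for these analytic properties is the Langlands--Shahidi method, applicable when $r$ appears in the adjoint action of a Levi on the unipotent radical of some parabolic in an ambient group; this covers symmetric cubes and fourth powers of $\GL_2$, various tensor products, and the classical group embeddings handled by Cogdell--Kim--Piatetski-Shapiro--Shahidi. When Langlands--Shahidi is unavailable, one falls back on the Arthur--Selberg trace formula approach: transfer test functions along $r$ by matching stable orbital integrals, invoke the fundamental lemma (now a theorem of Ngô), and compare stable trace formulas on $G$ and $G'$. This is how endoscopic transfer between classical and unitary groups is established in Arthur's and Mok's work, and the output already satisfies the archimedean and unramified compatibilities demanded by the conjecture.

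The main obstacle is fundamental: outside endoscopy and the short list of small symmetric powers and tensor products above, no general construction of functorial transfer is known, and producing one in the general non-endoscopic case is precisely the content of Langlands' \emph{beyond endoscopy} program---in which one would isolate the $r$-transfer via the poles of $L(s, \pi, r)$ inserted into a suitably weighted stable trace formula---which remains largely programmatic. A realistic proposal therefore would not attempt the conjecture in full generality, but would single out a tractable family of $r$ (for example, the standard embedding of a quasi-split classical group into some $\GL_n$, or a generic Levi--adjoint representation) and apply the machinery above to that case; the compatibility at archimedean places demanded by the conjecture would then follow from the compatibility of the chosen method with the local Langlands parametrisation at infinity described in \S\ref{locallanglandsatinfinity}.
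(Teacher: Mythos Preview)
The paper does not prove this statement: it is labeled and treated as a conjecture (a weak form of Langlands functoriality), stated so that it can be invoked as a hypothesis in later results (e.g.\ the second proposition after Proposition~\ref{prop:nearequivforGLn}). There is therefore no ``paper's own proof'' to compare against.

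You correctly recognise this and instead survey the known partial approaches---converse theorems plus Langlands--Shahidi, the stable trace formula and endoscopy, and the beyond endoscopy programme. That is the appropriate response to being asked to prove an open conjecture. One small quibble: your reduction to $\GL_n$ via a faithful representation of $\Gdual'$ and then ``automorphic descent back to $G'$'' is not really a reduction, since recognising which automorphic representations of $\GL_n$ descend to a given $G'$ is itself an instance of functoriality of comparable difficulty; the known descent results (Ginzburg--Rallis--Soudry, Arthur, Mok) are tied to specific classical groups rather than to arbitrary $G'$. But you flag this limitation yourself, so the proposal is an honest account of the state of the art rather than a claimed proof.
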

A trivial consequence of the definitions is
\begin{lemma}\label{lem:transferoflalgislalg}If $\pi$ is $L$-algebraic, then any functorial transfer of $\pi$ is also $L$-algebraic.
\end{lemma}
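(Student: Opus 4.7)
The statement is purely local at the infinite places, so I would immediately reduce to the following local claim: if $k$ is $\mathbb{R}$ or an algebraic closure of $\mathbb{R}$, if $r_\pi:W_k\to{}^LG(\C)$ is an $L$-algebraic admissible Weil parameter, and if $r:{}^LG\to{}^LG'$ is any algebraic $L$-group homomorphism, then $r\circ r_\pi$ is also $L$-algebraic. The lemma then follows because functorial transfer guarantees that $r_{\pi'_v}$ is $\widehat{G}'(\C)$-conjugate to $r\circ r_{\pi_v}$ at every infinite place, and $L$-algebraicity of a Weil parameter depends only on its $\widehat{G}'(\C)$-conjugacy class.

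For the local claim, fix a maximal torus $\widehat{T}\subseteq\widehat{G}$ and conjugate $r_\pi$ so that $r_\pi(\overline{k}^\times)\subseteq\widehat{T}(\C)$. The hypothesis of $L$-algebraicity says that the restriction to $\overline{k}^\times$ has the shape $z\mapsto\sigma(z)^{\lambda_\sigma}\tau(z)^{\lambda_\tau}$ with $\lambda_\sigma\in X_*(\widehat{T})$ (and hence $\lambda_\tau\in X_*(\widehat{T})$, because the difference $\lambda_\sigma-\lambda_\tau$ always lies in this lattice). Now $r$ restricts to an algebraic homomorphism of algebraic groups $\widehat{T}\to\widehat{G}'$; its image is a commutative connected subgroup consisting of semisimple elements and therefore lies in some maximal torus of $\widehat{G}'$, which after a further $\widehat{G}'(\C)$-conjugation of $r\circ r_\pi$ we may take to be a chosen $\widehat{T}'\subseteq\widehat{G}'$. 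Since any algebraic homomorphism of tori over $\overline{\Q}$ induces a $\Z$-linear map of cocharacter lattices, we obtain $r_*:X_*(\widehat{T})\to X_*(\widehat{T}')$, and the composite $(r\circ r_\pi)|_{\overline{k}^\times}$ is then $z\mapsto\sigma(z)^{r_*(\lambda_\sigma)}\tau(z)^{r_*(\lambda_\tau)}$ with $r_*(\lambda_\sigma)\in X_*(\widehat{T}')$. Hence $r\circ r_\pi$ is $L$-algebraic, as required.

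There is essentially no obstacle here; the only point requiring any care is the reminder that $L$-algebraicity was defined as a property of the $\widehat{G}(\C)$-conjugacy class of the parameter, so one is entitled to conjugate freely on both the source and target sides when placing $r_\pi(\overline{k}^\times)$ and $r(\widehat{T}(\C))$ inside chosen maximal tori. No analogous statement will hold for $C$-algebraicity, precisely because the shift by half the sum of the positive roots is not preserved by a general algebraic $L$-homomorphism; this is one of the main reasons for introducing the $L$-notion alongside the $C$-notion.
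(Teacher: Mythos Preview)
Your proposal is correct and is essentially a careful unpacking of what the paper dismisses in one line as ``a trivial consequence of the definitions.'' The paper offers no argument beyond that phrase, but the implicit reasoning is exactly yours: $L$-algebraicity is the statement that the restriction of the parameter to $\overline{k}^\times$ is given by integral cocharacters, and an algebraic $L$-homomorphism carries integral cocharacters to integral cocharacters. One could shorten your write-up slightly by invoking the alternative description the paper gives just before the definition of $L$-algebraic---namely, that $L$-algebraicity amounts to the restriction to the identity component of $W_k$ arising from a morphism of algebraic groups $\mathcal{S}_\C\to{}^LG_\C$---since then the preservation under composition with an algebraic $r$ is literally immediate; but your cocharacter-lattice formulation is equivalent and perfectly fine.
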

We also have the only slightly less trivial
\begin{lemma}\label{lem:transferoflarithislarith}If $\pi$ is $L$-arithmetic, then any functorial transfer of $\pi$ is also $L$-arithmetic.
\end{lemma}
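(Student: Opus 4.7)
The plan is to follow the Satake parameters through the transfer. The key input is simply that $r\colon\LG\to\LG'$, being an algebraic morphism between affine schemes of finite type over $\Qbar$, is automatically defined over some number field $E_{0}\subset\Qbar$: it is determined by the images of finitely many coordinate-ring generators of $\LG'$, so descends to the field obtained by adjoining those finitely many $\Qbar$-coefficients to $\Q$.

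Given this, the argument is short. Choose $E$ and $S$ as in the definition of $L$-arithmeticity for $\pi$, enlarging $S$ so that it also contains the finitely many finite places where $G'$ is ramified. For $v\notin S$, functoriality gives that $r_{\pi'_{v}}$ is $\Gdual'(\C)$-conjugate to $r\circ r_{\pi_{v}}$, so $\pi'_{v}$ is automatically unramified and its Satake parameter is the image of that of $\pi_{v}$ under the morphism
\[r_{*}\colon\Spec\Q[X_{*}(T_{d})]^{W_{d}}\longrightarrow\Spec\Q[X_{*}(T'_{d'})]^{W'_{d'}}\]
of affine $\Q$-schemes induced by $r$ on unramified Satake parameter spaces. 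Since $r$ is defined over $E_{0}$, so is $r_{*}$; hence $r_{*}$ sends the $E$-valued point corresponding to $\pi_{v}$ to an $EE_{0}$-valued point corresponding to $\pi'_{v}$. Thus the Satake parameters of $\pi'_{v}$ are uniformly defined over the number field $EE_{0}$ for every $v\notin S$, which is precisely $L$-arithmeticity of $\pi'$.

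The main thing to check, and essentially the only source of trouble, is that $r_{*}$ genuinely is an $E_{0}$-morphism between the $\Q$-structures on the Satake parameter spaces when $G$ or $G'$ is non-split at $v$; there Satake parameters are not bare points of $\widehat{T}(\C)$ but involve an unavoidable Frobenius twist. The cleanest route is to base-change to a finite unramified Galois extension $F'_{v}/F_{v}$ splitting both $G$ and $G'$, where the analogous morphism between the split Satake parameter spaces is manifestly defined over $E_{0}$, and then descend using the compatibility of the $\Q[X_{*}(T_{d})]^{W_{d}}$-structure under Weil restriction along finite unramified extensions recorded in the paragraph following Lemma~\ref{Larith_well_defined}.
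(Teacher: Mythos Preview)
Your overall strategy---track the Satake parameter through the transfer and show the induced map on parameter spaces is defined over a fixed number field---is correct and is exactly the paper's approach. But you have introduced an unnecessary detour and a shaky justification for the non-split case.

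The field $E_{0}$ is superfluous. The paper observes that the map $\widehat{G}\to\widehat{G}'$ induces a map of tori $\widehat{T}\to\widehat{T}'$ (up to Weyl action), and because $r$ is an $L$-group homomorphism it commutes with the Galois action, hence with Frobenius at~$v$. The induced map on Frobenius-invariant cocharacter lattices is therefore a map of $\Z$-modules $X_{*}(T'_{d})\to X_{*}(T_{d})$, and so yields a genuine $\Q$-algebra homomorphism $\Q[X_{*}(T'_{d})]^{W'_{d}}\to\Q[X_{*}(T_{d})]^{W_{d}}$. Thus the Satake parameter of $\pi'_{v}$ is defined over the \emph{same} field $E$, not merely over $EE_{0}$. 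Your display already writes $r_{*}$ as a morphism of $\Q$-schemes, which is the right statement; the subsequent appeal to $E_{0}$ is then both weaker and internally inconsistent with what you wrote.

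Your final paragraph proposes to handle non-split groups by base-changing to a splitting extension and then descending via the compatibility recorded after Lemma~\ref{Larith_well_defined}. But that passage concerns Weil restriction $\Res_{k_{1}/k}(G)$ for a group $G$ living over~$k_{1}$, not the relationship between $G/k$ and its base change $G_{k_{1}}$; so it does not give the descent you need. Fortunately no such descent is required: as above, Frobenius-equivariance of $\widehat{T}\to\widehat{T}'$ is immediate from $r$ being an $L$-group map, and that alone produces the lattice map $X_{*}(T'_{d})\to X_{*}(T_{d})$ directly over~$\Q$.
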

\begin{proof} This result follows from a purely local assertion.
If $v$ is a finite place
where $G$ and $G'$ are unramified and if~$k$ is the completion of~$F$
at~$v$ then the morphism~$r$ of $L$-groups induces a morphism
$\widehat{G}\to\widehat{G}'$ which commutes with the action of the Frobenius
at~$v$. If $T_d$ (resp.\ $T'_d$) denotes a maximal $k$-split torus in $G/k$
(resp. $G'/k$) with centralizer $T$ (resp.\ $T'$) then the map
$\widehat{G}\to\widehat{G}'$ induces a map
$\widehat{T}\to\widehat{T}'$ (well-defined up to restricted Weyl group actions)
which commutes with the action of Frobenius,
and hence maps $\widehat{T}_d\to\widehat{T}'_d$ and
$X^*(\widehat{T}'_d)=X_*(T'_d)\to X^*(\widehat{T}_d)=X_*(T_d)$. Now looking at the
explicit definition of the Satake isomorphism in Proposition~6.7
of~\cite{MR546608} we see, after unravelling, that the map
$\Q[X_*(T'_d)]^{W'_d}\to\Q[X_*(T_d)]^{W_d}$ induced from $X_*(T'_d)\to X_*(T_d)$
above has the property
that, after tensoring up to $\C$ and taking spectra, it sends
the point in $\Spec(\C[X_*(T_d)]^{W_d})$ corresponding to $r_{\pi_v}$
to the point in $\Spec(\C[X_*(T'_d)]^{W'_d})$ corresponding
to $r\circ r_{\pi_v}$. We now deduce that if the Satake parameter
of $\pi_v$ is defined over a subfield $E$ of $\C$ then so is the
Satake parameter of $\pi'_v$ (because the homomorphism
$\Q[X_*(T'_d)]^{W'_d}\to\C$ corresponding to $\pi'_v$
factors through $\Q[X_*(T_d)]^{W_d}$ and hence through $E$) and
the result follows.
\end{proof}
In addition,
\begin{prop}\label{prop:galoisrepsforfunctorialtransfer}If Conjecture \ref{conj:existence of Galois representations} holds for $\pi$, then it holds for any functorial transfer of $\pi$.
\end{prop}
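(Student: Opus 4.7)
The plan is to define the candidate Galois representation by composition, namely to set $\rho_{\pi',\iota} := r\circ\rho_{\pi,\iota}$, where $r$ is interpreted as a morphism ${}^LG(\Qpbar)\to{}^LG'(\Qpbar)$ via our fixed embedding $\Qbar\to\Qpbar$. For the ramification set, take $S'$ to be the union of the set $S$ given by applying Conjecture~\ref{conj:existence of Galois representations} to $\pi$, together with the (finite) set of places dividing $p$, the infinite places, and all places at which $G'$ or $\pi'$ is ramified. First I would verify the bullet point about the projection to $\Gal(\overline{F}/F)$: this is immediate from the fact that $r$ respects the projections to the Galois group by definition of an $L$-homomorphism.

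Next I would check compatibility with local Langlands at $v\notin S'$. By construction $\rho_{\pi',\iota}|_{W_{F_v}}=r\circ\rho_{\pi,\iota}|_{W_{F_v}}$, which by the hypothesis on $\rho_{\pi,\iota}$ is $\Gdual(\Qpbar)$-conjugate to $r\circ\iota(r_{\pi_v})=\iota(r\circ r_{\pi_v})$. Since $v\notin S'$, both $G'$ and $\pi'$ are unramified at $v$, so Conjecture~\ref{conj:functoriality} gives that $r\circ r_{\pi_v}$ is $\widehat{G'}(\C)$-conjugate to $r_{\pi'_v}$, and the result follows after applying $\iota$. The real places are handled analogously: the compatibility of functoriality at infinity asserts that $r\circ r_{\pi_v}$ is $\widehat{G'}(\C)$-conjugate to $r_{\pi'_v}$, so the element $r(\alpha_v)$ (with $\alpha_v$ as in Lemma~\ref{cxconj} for $\pi_v$) is $\widehat{G'}(\C)$-conjugate to the analogous element $\alpha'_v$ for $\pi'_v$; this uses only that $r$ commutes with the relevant cocharacters $\lambda_{\sigma_v},\lambda_{\tau_v}$ and with $r_{\pi_v}(j)$, which is clear from the shape of $\alpha_v$.

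The main obstacle---or at least the point where one has to invoke something beyond formal manipulation---is the behaviour at places $v\mid p$. Here I would argue as follows. Since $r$ is an algebraic morphism of $\Qpbar$-group schemes and being de Rham/Hodge--Tate may be tested on a faithful representation, composing the de Rham representation $\rho_{\pi,\iota}|_{G_{F_v}}$ with $r$ yields again a de Rham representation. For the Hodge--Tate cocharacter, the construction recalled in \S2.4 (after reducing to the case of a group defined over a finite extension of $\Q_p$ and applying Serre's construction) is clearly functorial under morphisms of algebraic groups, so the Hodge--Tate cocharacter of $\rho_{\pi',\iota}|_{G_{F_v}}$ associated to any $j\colon F\hookrightarrow\Qpbar$ is $r\circ\nu_j$, where $\nu_j$ is the Hodge--Tate cocharacter of $\rho_{\pi,\iota}|_{G_{F_v}}$. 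By hypothesis on $\rho_{\pi,\iota}$, $\nu_j$ is given by the recipe of Remark~\ref{rem: recipe for HT cocharacter}, i.e.\ by the element $\lambda_\sigma\in X^*(T)/W$ attached to $\pi_w$ at the corresponding infinite place $w$. The compatibility of local Langlands at infinity under $r$ (part of Conjecture~\ref{conj:functoriality}) then gives that $r$ sends $\lambda_\sigma$ to the corresponding element $\lambda'_\sigma$ for $\pi'_w$, which is exactly the recipe applied to $\pi'$. This matches the required Hodge--Tate cocharacter for $\rho_{\pi',\iota}$ and completes the verification.
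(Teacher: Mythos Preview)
Your proof is correct and takes exactly the same approach as the paper: define $\rho_{\pi',\iota}:=r\circ\rho_{\pi,\iota}$ and verify the conditions of Conjecture~\ref{conj:existence of Galois representations}. The paper's proof is in fact a single sentence (``one easily checks that $\rho_{\pi',\iota}:=r\circ\rho_{\pi,\iota}$ satisfies all the conditions''), so you have simply spelled out the details the authors left implicit. One small wording point: when you write ``Conjecture~\ref{conj:functoriality} gives that $r\circ r_{\pi_v}$ is $\widehat{G'}(\C)$-conjugate to $r_{\pi'_v}$'', you are not invoking the conjecture as an assumption---rather, $\pi'$ being a functorial transfer means by definition that this compatibility holds at the relevant places.
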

\begin{proof}With notation as above, one easily checks that $\rho_{\pi',\iota}:=r\circ\rho_{\pi,\iota}$ satisfies all the conditions of Conjecture \ref{conj:existence of Galois representations}.\end{proof}

Note that functoriality relies on things normalised in Langlands' canonical
way; the natural analogues of the results above in the $C$-algebraic
and $C$-arithmetic cases are not true in general, because
a morphism of algebraic groups does not send half the sum of the positive
roots to half the sum of the positive roots in general.

\section{Reality checks.}\label{reality checks}\subsection{}By Proposition 2 of \cite{langlands:notion} any automorphic representation $\pi$ on $G$ is a subquotient of an induction $\Ind_{P(\A_{F})}^{G(\A_{F})}\sigma$, where $P$ is a parabolic subgroup of $G$ with Levi quotient $M$, and $\sigma$ is a cuspidal representation of $M$. If $\pi'$ is another automorphic subquotient of $\Ind_{P(\A_{F})}^{G(\A_{F})}\sigma$, then $\pi_{v}$ and $\pi'_{v}$ are equal for all but finitely many places, so $\pi$ is $C$-arithmetic (respectively $L$-arithmetic) if and only if $\pi'$ is $C$-arithmetic (respectively $L$-arithmetic). The following lemma shows that $\pi$ is $C$-algebraic (respectively $L$-algebraic) if and only if $\pi'$ is $C$-algebraic (respectively $L$-algebraic).

\begin{lemma}\label{lem:compatibilitywithinductions}Suppose that $\pi$ and $\pi'$ are subquotients of a common induction $\Ind_{P(\A_{F})}^{G(\A_{F})}\sigma$. Then $\pi$ and $\pi'$ have the same infinitesimal character.
\end{lemma}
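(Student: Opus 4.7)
The plan is to reduce the global statement to a purely local one at each infinite place, and then invoke a standard fact about parabolic induction and infinitesimal characters.

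First I would observe that ``having the same infinitesimal character'' is a condition on the archimedean components. More precisely, if $v$ is an infinite place of $F$, then both $\pi_v$ and $\pi'_v$ appear as subquotients of the parabolically induced representation $\Ind_{P(F_v)}^{G(F_v)}\sigma_v$, where $\sigma_v$ is the local component of~$\sigma$ at~$v$. So it suffices to prove the following purely local statement: if $k$ is an archimedean local field, $P\subseteq G$ is a parabolic subgroup defined over~$k$ with Levi quotient~$M$, and $\sigma$ is an irreducible admissible representation of $M(k)$, then every irreducible subquotient of $\Ind_{P(k)}^{G(k)}\sigma$ has the same infinitesimal character, namely one determined explicitly by the infinitesimal character of~$\sigma$.

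Next I would recall the mechanism. Choose a maximal torus $T_M$ of $M$ containing a maximal torus $T$ of $G$ (over $\overline{k}$); compatibility of Harish-Chandra isomorphisms for $G$ and for $M$ shows that the center $Z(\mathfrak{g}^{\C})$ of the universal enveloping algebra of the complexified Lie algebra acts on $\Ind_{P(k)}^{G(k)}\sigma$ via a single character, computed from the infinitesimal character of $\sigma$ by the classical shift ``$+\rho_G - \rho_M$'' in the normalized induction setup (or the appropriate untwisted version in the unnormalized one). Since $Z(\mathfrak{g}^{\C})$ acts by a single scalar character on the whole induced module, it acts by the same character on every subquotient, which is exactly the assertion that all irreducible subquotients share one infinitesimal character.

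I would then apply this at every infinite place~$v$: both $\pi_v$ and $\pi'_v$ are subquotients of the same $\Ind_{P(F_v)}^{G(F_v)}\sigma_v$, hence have the same infinitesimal character. Summing over all infinite places gives the lemma. Since (as noted in Section~\ref{locallanglandsatinfinity}) both the notions of $L$-algebraic and $C$-algebraic for $\pi_v$ depend only on the infinitesimal character of the underlying real reductive group, the corollary that $\pi$ is $L$-algebraic (respectively $C$-algebraic) iff $\pi'$ is then follows at once.

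The main obstacle is not really an obstacle but rather a bookkeeping point: one must check that the parabolic induction used in Langlands' decomposition of automorphic representations as subquotients of $\Ind_{P(\A_F)}^{G(\A_F)}\sigma$ matches the local parabolic induction at infinity to which the Harish-Chandra compatibility applies (in particular, one should fix conventions about normalized versus unnormalized induction so that the ``shift by $\rho$'' is tracked correctly). Given the standard interpretation, this is routine, and the computation of the infinitesimal character of an induced representation from that of the inducing data is completely classical.
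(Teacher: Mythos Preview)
Your proposal is correct and follows essentially the same approach as the paper: the paper's proof is a one-line citation to Proposition~8.22 of Knapp's \emph{Representation Theory of Semisimple Groups}, which computes the infinitesimal character of a parabolically induced representation, and you have simply unpacked what that computation says. One tiny slip: you write ``a maximal torus $T_M$ of $M$ containing a maximal torus $T$ of $G$,'' but of course a maximal torus of the Levi $M$ \emph{is} a maximal torus of $G$, so $T_M=T$; this does not affect the argument.
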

\begin{proof}This is immediate from the calculation of the infinitesimal character of an induction---see for example Proposition 8.22 of \cite{KnappOverviewExamples}.
\end{proof}
Furthermore, we can check the compatibility of Conjecture
\ref{conj:existence of Galois representations} for $\pi$ and $\pi'$
(note that we cannot check the compatibility for Conjecture \ref{conj:existence of Galois representations - strong
    version with crystalline etc} because $\pi$ and $\pi'$ may be
  ramified at different places).

\begin{prop}\label{prop:Galois reps for constituents of inductions}Suppose that $\pi$ and $\pi'$ are subquotients of a common induction $\Ind_{P(\A_{F})}^{G(\A_{F})}\sigma$. Suppose that $\pi$ is $L$-algebraic. If Conjecture \ref{conj:existence of Galois representations} is valid for $\pi$ then it is valid for $\pi'$.
\end{prop}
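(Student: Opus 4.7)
The plan is to take $\rho_{\pi',\iota} := \rho_{\pi,\iota}$, enlarge the set $S$ provided by the conjecture for $\pi$ so as to include the ramification places of $\pi'$ (the infinite places and places above $p$ are already there), and verify the four bullet points of Conjecture~\ref{conj:existence of Galois representations} with $\pi'$ in place of $\pi$.

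The projection clause is unaffected. At each finite $v\notin S$, both $\pi_v$ and $\pi'_v$ are irreducible unramified subquotients of the common local induction $\Ind_{P(F_v)}^{G(F_v)}\sigma_v$, and since $(\Ind_{P(F_v)}^{G(F_v)}\sigma_v)^{K_v}$ is one-dimensional the spherical subquotient is unique, forcing $\pi_v\cong\pi'_v$ and hence $r_{\pi_v}=r_{\pi'_v}$; so the unramified local compatibility transfers from $\pi$ to $\pi'$. At $v\mid p$, the Hodge--Tate cocharacter prescribed by Remark~\ref{rem: recipe for HT cocharacter} is read off from the $W$-orbit of $\lambda_\sigma\in X^*(T)$ at the corresponding archimedean place, which is exactly the infinitesimal character of $\pi$ there; Lemma~\ref{lem:compatibilitywithinductions} says this equals the infinitesimal character of $\pi'$, so $\rho_\pi$ has the right Hodge--Tate cocharacter for $\pi'$ too.

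The main obstacle is the complex-conjugation clause at real places $v$, where we need $\rho_{\pi,\iota}(c_v)$ to be $\Gdual(\Qpbar)$-conjugate to $\iota(\alpha^{\pi'}_v)$, with $\alpha^{\pi'}_v$ the element of Lemma~\ref{cxconj} built from the full local $L$-parameter $r_{\pi'_v}$ and not merely its infinitesimal character. The natural route is to upgrade Lemma~\ref{lem:compatibilitywithinductions} to the assertion that all irreducible subquotients of $\Ind_{P(F_v)}^{G(F_v)}\sigma_v$ share the same archimedean $L$-parameter, namely the composition of $r_{\sigma_v}$ with the natural embedding ${}^L M\hookrightarrow \LG$; granting this, $r_{\pi_v}$ and $r_{\pi'_v}$ are $\Gdual(\C)$-conjugate, and the well-definedness part of Lemma~\ref{cxconj} forces $\alpha^\pi_v$ and $\alpha^{\pi'}_v$ to be $\Gdual(\C)$-conjugate as well.

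This last compatibility---that the archimedean local Langlands correspondence respects parabolic induction at the level of the full Weil parameter, not just the infinitesimal character---is the delicate point, because the authors have explicitly warned in~\S\ref{locallanglandsatinfinity} that a reducible principal series can have subquotients with the same infinitesimal character but quite different Weil parameters. I would extract the needed statement from sections~10 and~11 of~\cite{MR546608}, or else reprove it directly using the Langlands classification for real reductive groups by expressing each irreducible subquotient of $\Ind_{P(F_v)}^{G(F_v)}\sigma_v$ as a Langlands subquotient whose $L$-parameter is constructed functorially from $r_{\sigma_v}$.
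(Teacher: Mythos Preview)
Your treatment of the first three bullet points matches the paper's and is fine. The real place clause, however, has a genuine gap.

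You propose to prove that all irreducible subquotients of $\Ind_{P(F_v)}^{G(F_v)}\sigma_v$ share the same archimedean $L$-parameter, namely the composite of $r_{\sigma_v}$ with ${}^LM\hookrightarrow{}^LG$. This statement is \emph{false}, and you have already quoted the counterexample: for $\GL_2(\R)$ a reducible principal series has a discrete series and a finite-dimensional subquotient with genuinely different Weil parameters (different $L$-packets). What sections~10--11 of~\cite{MR546608} give is the $L$-parameter of the \emph{Langlands quotient} of an induced representation; they do not, and cannot, say that the other subquotients have the same parameter. So the route you outline cannot be completed.

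The paper's proof does \emph{not} attempt to compare $r_{\pi_v}$ and $r_{\pi'_v}$ globally. It instead invokes a finer result communicated by Vogan and drawn from Adams--Barbasch--Vogan~\cite{MR1162533} (their Theorem~1.24 and Proposition~6.16): for any two irreducible subquotients of the same parabolically induced representation, the elements $\lambda_\sigma(-1)\,r_{\pi_v}(j)$ and $\lambda'_\sigma(-1)\,r_{\pi'_v}(j)$ are $\Gdual(\C)$-conjugate, even though the full parameters need not be. One then conjugates by $r_{\pi_v}(e^{-i\pi/4})$ (respectively $r_{\pi'_v}(e^{-i\pi/4})$) to convert these into the elements $\alpha_v$, $\alpha'_v$ of Lemma~\ref{cxconj}. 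In other words, only a very specific invariant of the $L$-parameter---essentially the value at $j$ corrected by the infinitesimal character---is constant across the subquotients, and that is exactly what the conjecture demands. Your argument needs to be rerouted through this weaker but true statement.
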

\begin{proof}Suppose that Conjecture \ref{conj:existence of Galois
    representations} is valid for $\pi$. We wish to show that
  $\rho_{\pi',\iota}:=\rho_{\pi,\iota}$ satisfies all the
  conditions in Conjecture \ref{conj:existence of Galois
    representations}. Since for all but finitely many places $\pi_{v}$
  and $\pi'_{v}$ are unramified and isomorphic, the first two
  conditions are
  certainly satisfied. The third condition is satisfied by Lemma
  \ref{lem:compatibilitywithinductions}.  It remains to check that if
  $v$ is a real place, then (with obvious notation)
  $\lambda_\sigma(i)\lambda_\tau(i)r_{\pi_v}(j)$ and
  $\lambda'_\sigma(i)\lambda'_\tau(i)r_{\pi'_v}(j)$ are
  $\Gdual(\C)$-conjugate. As explained to us by David Vogan, it
  follows from the results of \cite{MR1162533} (specifically from
  Theorem 1.24 and Proposition 6.16) that
  $\lambda_\sigma(-1)r_{\pi_v}(j)$ and $\lambda'_\sigma(-1)r_{\pi'_v}(j)$ are
  $\Gdual(\C)$-conjugate. It is easy to check that these are $\widehat{G}(\C)$-conjugate
  to $\lambda_\sigma(i)\lambda_\tau(i)r_{\pi_v}(j)$ and
  $\lambda'_\sigma(i)\lambda'_\tau(i)r_{\pi'_v}(j)$ respectively (one
conjugates by $r_{\pi_v}(e^{-i\pi/4})$, $r_{\pi'_v}(e^{-i\pi/4})$), as required.
\end{proof}
More generally, if $\pi$, $\pi'$ are nearly equivalent (that is,
$\pi_{v}$ and $\pi'_{v}$ are isomorphic for all but finitely many
$v$), then $\pi$ is $C$-arithmetic (respectively $L$-arithmetic) if
and only if $\pi'$ is $C$-arithmetic (respectively $L$-arithmetic). We
would like to be able to prove as above that $\pi$ and $\pi'$ have the
same infinitesimal character, and we would like to obtain the analogue
of Proposition \ref{prop:Galois reps for constituents of
  inductions}. Unfortunately, these seem in general to be beyond
the reach of current techniques. However, we can prove these results
for $\GL_{n}$, and we can then deduce them for general groups under
the assumption of functoriality.

\begin{prop}\label{prop:nearequivforGLn}If $G=\GL_{n}$ and $\pi$,
  $\pi'$ are nearly equivalent, then $\pi$, $\pi'$ have the same
  infinitesimal character.  Suppose further that $\pi$ is
  $L$-algebraic. If Conjecture \ref{conj:existence of Galois
    representations} is valid for $\pi$ then it is valid for $\pi'$.
\end{prop}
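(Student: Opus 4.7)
The plan is to reduce this to Proposition~\ref{prop:Galois reps for constituents of inductions} by exhibiting $\pi$ and $\pi'$ as subquotients of a \emph{common} parabolic induction, for which the crucial input is strong multiplicity one for isobaric automorphic representations of $\GL_n$.

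First, I would invoke Langlands' classification (Proposition~2 of \cite{langlands:notion}) to write $\pi$ as a subquotient of $\Ind_{P(\A_F)}^{G(\A_F)}\sigma$ and $\pi'$ as a subquotient of $\Ind_{P'(\A_F)}^{G(\A_F)}\sigma'$, where $P = M N$, $P' = M' N'$ are standard parabolics with Levi quotients $M, M'$, and $\sigma, \sigma'$ are cuspidal automorphic representations of $M(\A_F)$, $M'(\A_F)$. For $G = \GL_n$ each such Levi is a product $\prod_i \GL_{n_i}$, so $\sigma$ and $\sigma'$ are tuples of cuspidal representations of smaller $\GL$'s.

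Next, I would use the Jacquet--Shalika strong multiplicity one theorem for $\GL_n$: if the isobaric sums $\boxplus_i \sigma_i$ and $\boxplus_j \sigma'_j$ (which are the unique isobaric automorphic representations of $\GL_n/F$ nearly equivalent to $\pi$ and $\pi'$ respectively) agree at almost all places, then the multisets of cuspidal constituents coincide. Near equivalence of $\pi$ and $\pi'$ therefore forces $M$ and $M'$ to be conjugate in $G$, and, after relabelling the factors, $\sigma \cong \sigma'$. Since conjugate parabolics with isomorphic cuspidal data give isomorphic parabolic inductions (unnormalised induction is insensitive to the choice of parabolic with a given Levi class up to the Weyl group action that fixes the cuspidal support), we may realise both $\pi$ and $\pi'$ as subquotients of a single induced representation $\Ind_{P(\A_F)}^{G(\A_F)}\sigma$.

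Once this common-induction reduction is in hand, the equality of infinitesimal characters is immediate from Lemma~\ref{lem:compatibilitywithinductions}, and the transfer of Conjecture~\ref{conj:existence of Galois representations} from $\pi$ to $\pi'$ is immediate from Proposition~\ref{prop:Galois reps for constituents of inductions} with the same choice $\rho_{\pi',\iota} := \rho_{\pi,\iota}$. The main obstacle is purely the classification step: one must be sure that Jacquet--Shalika really pins down the cuspidal support from the unramified data, and that the two notions of ``nearly equivalent'' (agreement at almost all places of $\pi_v$ versus agreement of the isobaric sums) coincide for $\GL_n$; both are standard but are exactly the $\GL_n$-specific inputs that are not available in general, which is why the stronger statement is only conjectural beyond $\GL_n$ (and follows, as the authors will presumably note, once functoriality is assumed, by transferring to $\GL_n$ and pulling back).
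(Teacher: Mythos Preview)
Your proposal is correct and follows essentially the same approach as the paper: invoke strong multiplicity one for isobaric representations of $\GL_n$ (the paper cites Theorem~4.4 of \cite{MR623137}) to place $\pi$ and $\pi'$ as subquotients of a common parabolic induction, then apply Lemma~\ref{lem:compatibilitywithinductions} and Proposition~\ref{prop:Galois reps for constituents of inductions}. Your write-up unpacks the cuspidal-support matching in slightly more detail than the paper's one-line citation, but the argument is the same.
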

\begin{proof}By the strong multiplicity one theorem for isobaric
  representations (Theorem 4.4 of \cite{MR623137}), $\pi$, $\pi'$ are
  both subquotients of a common induction
  $\Ind_{P(\A_{F})}^{G(\A_{F})}\sigma$. The result follows from Lemma
  \ref{lem:compatibilitywithinductions} and Proposition
  \ref{prop:Galois reps for constituents of inductions}.
\end{proof}
\begin{prop}
  Let $G$ be arbitrary. Assume Conjecture \ref{conj:functoriality}. If $\pi$ and $\pi'$
  are nearly equivalent automorphic representations of~$G$, then for any infinite place $v$, $\pi_{v}$
  and $\pi'_{v}$ have the same infinitesimal characters. Suppose
  further that $\pi$ is $L$-algebraic. If Conjecture
  \ref{conj:existence of Galois representations} is valid for $\pi$
  then it is valid for $\pi'$.
\end{prop}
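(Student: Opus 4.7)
The plan is to reduce the general case to Proposition~\ref{prop:nearequivforGLn} via Conjecture~\ref{conj:functoriality}. First I would choose a faithful algebraic $L$-group homomorphism $r:{}^LG\to{}^LH$ with $H$ a product of restrictions of scalars of $\GL_n$'s; such an $r$ can be built by taking a faithful algebraic representation $V$ of $\widehat{G}$ and extending it to $\LG$ (replacing $V$ by an induction if necessary to accommodate the action of $\Gal(K/F)$ for $K$ a splitting field of $G$). Applying Conjecture~\ref{conj:functoriality} would give functorial transfers $\Pi,\Pi'$ of $\pi,\pi'$ to $H$, and near-equivalence of $\pi,\pi'$ passes to $\Pi,\Pi'$ because $\pi_v\cong\pi'_v$ at almost all finite $v$ forces $\Pi_v\cong\Pi'_v$ there.

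By Proposition~\ref{prop:nearequivforGLn} (applied to each $\GL_n$-factor of $H$), $\Pi_v$ and $\Pi'_v$ then have equal infinitesimal character at every archimedean $v$. If moreover $\pi$ is $L$-algebraic, Lemma~\ref{lem:transferoflalgislalg} would make $\Pi$ $L$-algebraic, and Proposition~\ref{prop:galoisrepsforfunctorialtransfer} would produce $\rho_{\Pi,\iota}=r\circ\rho_{\pi,\iota}$ satisfying Conjecture~\ref{conj:existence of Galois representations} for $\Pi$; Proposition~\ref{prop:nearequivforGLn} then says the same Galois representation works for $\Pi'$.

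The crucial step is to lift these conclusions from $H$ back to $G$, in order to deduce that $\pi_v$ and $\pi'_v$ have equal infinitesimal character at every archimedean $v$ and that $\rho_{\pi,\iota}$ itself serves as $\rho_{\pi',\iota}$. Faithfulness of $r$ makes the induced map $r_*:X_*(\widehat{T}_G)\otimes\C\to X_*(\widehat{T}_H)\otimes\C$ injective, and the hard part will be promoting the equality of $W(H)$-orbits of $r_*(\lambda^\pi_\sigma)$ and $r_*(\lambda^{\pi'}_\sigma)$ to an equality of $W(G)$-orbits of $\lambda^\pi_\sigma$ and $\lambda^{\pi'}_\sigma$; I would attack this either by applying the above reduction with a collection of faithful $r$'s that together separate $W(G)$-orbits, or by choosing a single $r$ for which $(X_*(\widehat{T}_G)\otimes\C)/W(G)$ maps injectively into $(X_*(\widehat{T}_H)\otimes\C)/W(H)$. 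Once this is done, setting $\rho_{\pi',\iota}:=\rho_{\pi,\iota}$, condition~(i) is automatic, condition~(ii) holds for $v$ outside $S':=S\cup\{v:\pi_v\not\cong\pi'_v\}$, condition~(iii) at $v\mid p$ follows from the matching infinitesimal characters via Remark~\ref{rem: recipe for HT cocharacter}, and condition~(iv) at real places follows by the same \cite{MR1162533}-based argument as in Proposition~\ref{prop:Galois reps for constituents of inductions}, again combined with the lifting argument above.
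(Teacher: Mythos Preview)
Your overall strategy---transfer to $\GL_n$, apply Proposition~\ref{prop:nearequivforGLn}, then lift back---is exactly the paper's. The gap is precisely where you flag it: you do not resolve the lifting step, and your first suggestion (a single faithful $r$) does not work in general, since injectivity of $r_*$ on $X_*(\widehat{T}_G)\otimes\C$ does not force injectivity on $W$-orbit spaces.

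The paper's resolution is your option~(a), but carried out with \emph{all} representations $r:{}^LG\to\GL_n\times\Gal(\overline{F}/F)$ rather than a carefully chosen collection, together with a clean justification for why this suffices. The point is that $\widehat{G}(\C)$-conjugacy classes of \emph{semisimple} elements of ${}^LG_v(\C)$ are determined by the conjugacy classes of their images under all such $r$ (this reduces to the quasi-split case and then follows from Proposition~6.7 of \cite{MR546608}). Both pieces of data to be matched are semisimple: the values of $r_{\pi_v}|_{\C^\times}$ lie in a torus, and the element $c_v=\lambda_\sigma(i)\lambda_\tau(i)r_{\pi_v}(j)$ has order dividing $2$ by Lemma~\ref{cxconj}. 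So once Proposition~\ref{prop:nearequivforGLn} gives $r(r_{\pi_v}(z))\sim r(r_{\pi'_v}(z))$ and $r(c_v)\sim r(c'_v)$ for every $r$, the conjugacy of the original elements follows immediately.

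Two smaller remarks. First, you do not need to invoke \cite{MR1162533} separately for condition~(iv): Proposition~\ref{prop:nearequivforGLn} already delivers the conjugacy of $r(c_v)$ and $r(c'_v)$ on the $\GL_n$ side (its proof goes through \cite{MR1162533} via Proposition~\ref{prop:Galois reps for constituents of inductions}, but you need only cite the conclusion), and then the same semisimple-lifting argument applies. Second, there is no need to pass to restrictions of scalars of $\GL_n$; ordinary $\GL_n$ over $F$ suffices, since the separating fact above uses arbitrary representations of ${}^LG_v$.
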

\begin{proof}For each infinite place $v$ choose an injection $\Fbar\to\Fvbar$.
This gives us a natural injection ${}^{L}G_{v}\to{}^{L}G$, where $G_{v}$ is the
  base change of $G$ to $F_{v}$. Since $r_{\pi_{v}}|_{\C^{\times}}$ is
  valued in ${}^{L}G_{v}$, as is
  $c_{v}:=\lambda_\sigma(i)\lambda_\tau(i)r_{\pi_v}(j)$ if $v$
is real, we see that their
  $\Gdual(\C)$-conjugacy classes in ${}^{L}G$ are determined by their
  $\Gdual(\C)$-conjugacy classes in ${}^{L}G_{v}$.
	
  Now, the $\Gdual(\C)$-conjugacy classes of semisimple elements of
  ${}^{L}G_{v}(\C)$ are determined by the knowledge of the conjugacy
  classes of their images under all representations of
  ${}^{L}G_{v}(\C)$. To see this, note that since the formation of the
  $L$-group is independent of the choice of inner form, it suffices to
  check this in the case where $G_{v}$ is quasi-split; but the result
  then follows immediately from Proposition 6.7 of \cite{MR546608}.
	
	Let $r:\LG\to\GL_{n}\times\Gal(\overline{F}/F)$ be a homomorphism of $L$-groups. Then by Conjecture \ref{conj:functoriality}, there are automorphic representations $\Pi$, $\Pi'$ on $\GL_{n}$ which are functorial transfers of $\pi$, $\pi'$ respectively. By Proposition \ref{prop:nearequivforGLn}, $\Pi$ and $\Pi'$ have the same infinitesimal characters. Thus for each infinite place $v$, $r_{\Pi_{v}}|_{\C^{\times}}$ and $r_{\Pi'_{v}}|_{\C^{\times}}$ are conjugate, i.e. $r\circ r_{\pi,\iota}|_{\C^{\times}}$ and $r\circ r_{\pi',\iota}|_{\C^{\times}}$ are conjugate. Since this is true for all $r$, we see that $r_{\pi,\iota}|_{\C^{\times}}$ and $r_{\pi',\iota}|_{\C^{\times}}$ are conjugate, whence $\pi$ and $\pi'$ have the same infinitesimal character.
	
As in the proof of Proposition \ref{prop:Galois reps for constituents of inductions}, it remains to check that if $v$ is a real place of $F$, then
$c_{v}:=\lambda_\sigma(i)\lambda_\tau(i)r_{\pi_v}(j)$ and
$c'_{v}:=\lambda'_\sigma(i)\lambda'_\tau(i)r_{\pi'_v}(j)$ are $\Gdual(\C)$-conjugate. By a similar argument to that used in the first half of this proof, we see that if $r:\LG\to\GL_{n}\times\Gal(\overline{F}/F)$ is a homomorphism of $L$-groups, then $r(c_{v})$ and $r(c'_{v})$ are conjugate in $\GL_{n}(\C)$.
Furthermore, $c_{v}$ and $c'_{v}$ are both semisimple.
Thus $c_{v}$ and $c'_{v}$ are $\Gdual(\C)$-conjugate.
\end{proof}
\subsection{Cohomological representations.}\label{sec:cohomological}
Cohomological automorphic representations provide a good testing ground for our conjectures. It follows easily (see below) that any cohomological representation is $C$-algebraic, and one can often show that they are $C$-arithmetic, too
(it would not surprise us if Shimura variety experts could prove they
were always $C$-arithmetic with relative ease).
In the case $G=\GL_{n}$ these arguments are due to Clozel, who also shows that for $\GL_{n}$ any regular $C$-algebraic representation is cohomological after possibly twisting by a quadratic character (see Lemme 3.14 of \cite{MR1044819}).

Let $v$ be an infinite place of $F$, and let $K_{v}$ be the fixed choice of a maximal compact subgroup of $G(F_{v})$ used in the definition of automorphic forms on $G$. Let $\mathfrak{g}_{v}$ be the complexification of the Lie algebra of $G(F_{v})$. Recall that $\pi_{v}$ may be thought of as a $(\mathfrak{g}_{v},K_{v})$-module, with underlying $\C$-vector space $V_{v}$, say. 

\begin{defn}
	We say that $\pi_{v}$ is \emph{cohomological} if there is an algebraic complex representation $U$ of $G(F_{v})$ and a non-negative integer $i$ such that $$H^{i}(\mathfrak{g}_{v},K_{v};U\otimes V_{v})\neq 0.$$ We say that $\pi$ is cohomological if $\pi_{v}$ is cohomological for all archimedean places $v$. 
\end{defn}

\begin{lemma}\label{lem:cohomogicalimpliesCalgebraic}If $\pi$ is cohomological, then it is $C$-algebraic.
	\end{lemma}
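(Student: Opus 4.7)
The plan is to reduce the statement to a purely local question at each archimedean place $v$ of $F$ and then combine Wigner's lemma on $(\mathfrak{g},K)$-cohomology with the Harish-Chandra description of infinitesimal characters of finite-dimensional algebraic representations.

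First I would note that $C$-algebraicity of $\pi$ is by definition a condition at each infinite place separately, and at each such place it depends only on the infinitesimal character of the underlying $(\mathfrak{g}_v,K_v)$-module: this was already observed in \S\ref{locallanglandsatinfinity}, where the case $F_v\cong\C$ is handled via restriction of scalars to $\R$ (and cohomology of $(\mathfrak{g}_v,K_v)$-modules is clearly compatible with this restriction). So I would fix $v$ and reduce to the following local statement: if $U$ is a finite-dimensional irreducible algebraic complex representation of $G(F_v)$ and $H^i(\mathfrak{g}_v,K_v;U\otimes V_v)\neq 0$ for some $i$, then $\pi_v$ is $C$-algebraic. (We may assume $U$ is irreducible since the functor $H^*(\mathfrak{g}_v,K_v;\,\cdot\otimes V_v)$ is additive, so some irreducible summand of $U$ contributes.)

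Second, I would invoke the standard consequence of Wigner's lemma (Corollary I.4.2 of Borel--Wallach): the nonvanishing of $H^*(\mathfrak{g}_v,K_v;U\otimes V_v)$ forces the infinitesimal character of $\pi_v$ to coincide with that of the contragredient $U^{\vee}$. The Harish-Chandra isomorphism identifies the infinitesimal character of the finite-dimensional irreducible algebraic $G(F_v)$-module of highest weight $\mu\in X^*(T)$ with the $W$-orbit of $\mu+\delta\in X^*(T)\otimes\C$; applying this to $U^{\vee}$, whose highest weight $-w_0\mu$ again lies in $X^*(T)$, we see that the infinitesimal character of $\pi_v$ is the $W$-orbit of some element of $\delta+X^*(T)$.

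Finally, as recalled in \S\ref{locallanglandsatinfinity}, the infinitesimal character of $\pi_v$ is, via Harish-Chandra, precisely the $W$-orbit of $\lambda_\sigma\in X^*(T)\otimes\C$ associated to $\pi_v$ by Langlands. Combining this with the previous step yields $\lambda_\sigma\in\delta+X^*(T)$, i.e.\ $\lambda_\sigma-\delta\in X^*(T)$, which is exactly the defining condition of $C$-algebraicity for $\pi_v$. The only step with any real content here is the appeal to Wigner's lemma; everything else is a straight translation through the Harish-Chandra isomorphism, with the $F_v\cong\C$ case absorbed into the real case by restriction of scalars.
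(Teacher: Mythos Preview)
Your proposal is correct and follows essentially the same route as the paper: the paper cites Corollary~4.2 of~\cite{MR1721403} (the Wigner/Casimir argument) to conclude that a cohomological $\pi_v$ shares its infinitesimal character with a finite-dimensional representation, and then invokes a separate lemma recording that finite-dimensional representations have infinitesimal character in $\delta+X^*(T)$ via Harish-Chandra. Your only additional detail is making the contragredient $U^\vee$ explicit, which the paper absorbs into the citation.
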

\begin{proof}By Corollary 4.2 of \cite{MR1721403}, if $\pi$ is cohomological then for each archimedean place $v$ there is a continuous finite-dimensional representation $U_{v}$ of $G(F_{v})$ such that $\pi_{v}$ and $U_{v}$ have the same infinitesimal characters. The result then follows from Lemma \ref{lem:infcharoffinitedimensional} below.\end{proof}
\begin{lemma}\label{lem:infcharoffinitedimensional}If $v$ is an archimedean place of $F$ and $U$ is a continuous finite dimensional representation of $G(F_{v})$ with infinitesimal character $\chi_{v}$, identified with an element of $X^{*}(T)\otimes_\Z\C$  as in section \ref{locallanglandsatinfinity}, then $\chi_{v}-\delta\in X^{*}(T)$.
	
\end{lemma}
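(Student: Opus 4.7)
\emph{Proof plan.} The content of the lemma is that the infinitesimal character of a finite-dimensional algebraic representation of $G(F_v)$, with the Harish-Chandra normalization used in \S\ref{locallanglandsatinfinity}, is the sum of a highest weight in $X^{*}(T)$ and the half-sum of positive roots $\delta$. My plan is to proceed by three reductions.

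First, I would reduce to the case $F_v = \R$. If $F_v$ is an algebraic closure of the reals, set $H := \Res_{F_v/\R}(G)$ and view $U$ as a representation of $H(\R) = G(F_v)$. As noted in \S\ref{locallanglandsatinfinity}, the element $\lambda_\sigma \in X^{*}(T)\otimes\C$ attached to $U$ as a $G(F_v)$-representation is the $\sigma$-component of the pair $\lambda_\sigma \oplus \lambda_\tau \in (X^{*}(T)\oplus X^{*}(T))\otimes\C$ attached to $U$ as an $H(\R)$-representation (under the canonical identification $\widehat{H}(\C) = \widehat{G}(\C)^{2}$), and half the sum of positive roots for $H$ is $\delta\oplus\delta$. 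The conclusion of the lemma for $H$ therefore implies it for $G$.

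Next, I would reduce to the case $U$ irreducible. Finite-dimensional algebraic representations of the connected complex reductive group $G_\C$ are semisimple by Weyl's theorem, so $U$ decomposes as a direct sum of irreducibles. The hypothesis that $Z(U(\gg_\C))$ acts on $U$ through a single character $\chi_v$ forces every irreducible constituent to share this infinitesimal character, so it suffices to treat one such constituent.

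Finally, for $U$ irreducible, Cartan's highest weight theorem furnishes a unique highest weight $\mu \in X^{*}(T)$ dominant with respect to the fixed Borel, and $U$ is determined up to isomorphism by $\mu$. The normalization of the Harish-Chandra isomorphism adopted in \S\ref{locallanglandsatinfinity}---the one independent of the choice of Borel---sends the action of $Z(U(\gg_\C))$ on $U$ to evaluation at the $W$-orbit of $\mu + \delta \in X^{*}(T)\otimes_\Z\C$. Thus $\chi_v$ is the $W$-orbit of $\mu + \delta$, and $\chi_v - \delta$ is represented by $\mu \in X^{*}(T)$, as required.

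The main obstacle, such as it is, is purely conventional: one must confirm that the paper's normalization of Harish-Chandra really is the one carrying the $\delta$-shift, so that the infinitesimal character of a finite-dimensional representation is $C$-algebraic rather than $L$-algebraic. This can be checked against the explicit computations at the end of \S\ref{locallanglandsatinfinity}: for the trivial representation of $\GL_2(\R)$ one has $\lambda_\sigma = \delta = (\tfrac{1}{2}, -\tfrac{1}{2})$, which agrees with the formula $\chi_v = \mu + \delta$ at the trivial highest weight $\mu = 0$.
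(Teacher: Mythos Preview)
Your proposal is correct and takes essentially the same approach as the paper's own proof, which simply invokes the Harish-Chandra formula (citing (5.43) in Knapp) that the infinitesimal character of a finite-dimensional highest-weight representation with highest weight $\mu$ is the $W$-orbit of $\mu+\delta$. You have spelled out the natural preparatory reductions (to $F_v=\R$ via restriction of scalars, as the paper itself sets things up in \S\ref{locallanglandsatinfinity}, and to irreducibles via semisimplicity) and added a helpful consistency check against the $\GL_2(\R)$ example, but the core of both arguments is the same Harish-Chandra computation.

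One small remark: both your argument and the paper's implicitly use that $U$ is algebraic (so that its highest weight lies in $X^{*}(T)$ rather than merely in $X^{*}(T)\otimes\R$); the lemma's hypothesis says only ``continuous'', which taken literally would allow things like $|\cdot|^{1/2}$ on $\GL_1(\R)$ and make the conclusion false. In the intended application (coming from the definition of cohomological, where $U$ is explicitly algebraic) this is harmless, and you treat $U$ as algebraic from your second reduction onward, which is the right thing to do.
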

\begin{proof}This follows almost at once from the definition of the Harish-Chandra isomorphism; see for example (5.43) in \cite{MR1920389}.\end{proof}
We note that a cuspidal cohomological unitary automorphic representation~$\pi$
is also $C$-arithmetic, at least when~$\pi_\infty$ is cohomological
for the trivial representation; for we can restrict scalars down to $\Q$ and
then follow the argument in \S2.3 of~\cite{MR1265563}. This argument
presumably works in some greater generality.

\section{Relationship with theorems/conjectures in the
  literature.}\label{comparison to gross and clozel}

\subsection{}In \cite{MR1044819}, Clozel makes a number of conjectures about certain $C$-algebraic automorphic representations for $\GL_{n}$. We now examine the compatibility of these conjectures with those of this paper. Clozel calls an automorphic representation of $\GL_{n}$ \emph{algebraic} if it is $C$-algebraic and isobaric; his principal reason for restricting to isobaric representations is that he wishes to use the language of Tannakian categories.

Let $\pi=\otimes'\pi_{v}$ be an algebraic (in Clozel's sense) representation of $\GL_{n}$ over $F$. Then Clozel conjectures (see conjectures 3.7 and 4.5 of \cite{MR1044819}) that

\begin{conj}\label{conj:Clozel}Let $\pi_{f}=\otimes'_{{v\nmid\infty}}\pi_{v}$. Then there is a number
field $E\subset\C$ such that $\pi_f$ is defined over $E$  (that is, such that $\pi_f\otimes_{\C,\sigma}\C\cong\pi_f$
for all automorphisms $\sigma$ of $\C$ which fix $E$ pointwise). In addition, Conjecture \ref{conj:existence of Galois representations - strong version with crystalline etc} holds for $\pi\otimes|\cdot|^{(n-1)/2}$.
\end{conj}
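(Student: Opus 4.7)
The plan is to deduce Clozel's Conjecture \ref{conj:Clozel} from our Conjectures \ref{conj:CarithmeticCalgebraic} and \ref{conj:existence of Galois representations - strong version with crystalline etc}, applied to $G = \GL_n/F$, by unwinding the twisting element construction of Section \ref{section:twisting}. The preliminary step is to identify the twist explicitly: taking the standard diagonal torus $T$ and upper-triangular Borel of $\GL_n$, half the sum of the positive roots is $\delta = ((n-1)/2, (n-3)/2, \ldots, -(n-1)/2) \in X^*(T) \otimes \Q$, and $\theta = (n-1, n-2, \ldots, 1, 0) \in X^*(T)$ is a twisting element in the sense of Definition \ref{twistingelement} (it is Galois-stable trivially, and pairs to $1$ with each simple coroot $e_i - e_{i+1}$). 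A direct computation gives $\theta - \delta = \frac{n-1}{2}(1, 1, \ldots, 1)$, which is pulled back from the character $\frac{n-1}{2}\det$ on the cocenter. Hence the character $|\cdot|^{\theta-\delta}$ of Section \ref{section:twisting} is precisely $|\det|^{(n-1)/2}$, matching Clozel's normalization.

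For the existence of Galois representations, Proposition \ref{prop:twistinglalgtocalg} converts $C$-algebraicity of $\pi$ into $L$-algebraicity of $\pi \otimes |\det|^{(n-1)/2}$, at which point Conjecture \ref{conj:existence of Galois representations - strong version with crystalline etc} supplies a continuous representation into $\GL_n(\Qpbar) \times \Gal(\overline{F}/F)$ which is unramified away from the archimedean, $p$-adic, and ramified places, crystalline at $p$-adic places where $\pi$ is unramified, has Hodge--Tate cocharacters prescribed by the recipe of Remark \ref{rem: recipe for HT cocharacter}, and behaves correctly at complex conjugations. A routine unwinding of the explicit form of the local Langlands correspondence for $\GL_n$ (both at the finite unramified places via the Satake parametrisation and at the archimedean places via the standard description of $L$-parameters) then matches these properties against the list Clozel predicts.

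The rationality statement is more delicate. Our Conjecture \ref{conj:CarithmeticCalgebraic} yields that $\pi$ is $C$-arithmetic, giving a number field $E \subset \C$ over which $\pi_v$ is defined for every finite $v \notin S$. To upgrade this to Clozel's global assertion that $\pi_f$ itself is defined over some (possibly larger) number field, the plan is to combine two ingredients: (i) strong multiplicity one for isobaric representations of $\GL_n$ (Theorem 4.4 of \cite{MR623137}) to deduce $\pi_f^\sigma \cong \pi_f$ for every $\sigma \in \Aut(\C/E)$, by comparing unramified Satake parameters away from $S$; and (ii) a descent extending Lemma \ref{definedoverE} (or rather Waldspurger's Lemma~I.1 of \cite{Waldspurger:MR783510}) to the ramified setting, where the uniqueness of Whittaker models for isobaric $\GL_n$ representations plays the rigidifying role that the one-dimensional space of $K$-fixed vectors plays in the unramified case. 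The main obstacle is exactly this second step: while the abstract isomorphisms $\pi_f^\sigma \cong \pi_f$ come essentially for free from $C$-arithmeticity, producing from them a genuine $E$-rational model of $\pi_f$ requires that the intertwiners be chosen in a $\sigma$-coherent way at the ramified places, and the Whittaker normalisation is what makes this cocycle condition trivial.
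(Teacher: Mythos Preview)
The statement in question is a \emph{conjecture}, and the paper does not attempt to prove it; rather, the paper records Clozel's conjecture and then argues in the \emph{opposite} direction from what you propose. Immediately after stating Conjecture~\ref{conj:Clozel}, the paper shows that Clozel's conjecture implies the paper's Conjectures~\ref{conj:existence of Galois representations} and~\ref{conj:existence of Galois representations - strong version with crystalline etc} for $\GL_n$ (via Proposition~\ref{prop:nearequivforGLn}), and implies $C$-arithmeticity (via Clozel's own Proposition~3.1(iii)). The paper then explicitly remarks that its conjectures are \emph{weaker} than Clozel's. So there is no ``paper's own proof'' to compare against, and the direction you have chosen is the one the authors deliberately do not claim.

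Your argument for the Galois-representation half of Clozel's conjecture is fine and essentially tautological: the second clause of Conjecture~\ref{conj:Clozel} literally \emph{is} Conjecture~\ref{conj:existence of Galois representations - strong version with crystalline etc} applied to $\pi\otimes|\cdot|^{(n-1)/2}$, and your identification of $|\cdot|^{\theta-\delta}$ with $|\det|^{(n-1)/2}$ is correct. But the rationality half has a genuine gap. Conjecture~\ref{conj:CarithmeticCalgebraic} only yields that $\pi_v$ is defined over a number field for $v\notin S$; to conclude $\pi_f^\sigma\cong\pi_f$ via strong multiplicity one you would need to know that $\pi_f^\sigma$ is the finite part of an isobaric \emph{automorphic} representation, and nothing in the paper's conjectures gives you that. (Twisting an automorphic form by a field automorphism of $\C$ destroys the analytic conditions at infinity in general; this is exactly the hard content Clozel is conjecturing.) The Whittaker-model descent you sketch in step~(ii) would indeed pin down an $E$-structure \emph{once} you know $\pi_v^\sigma\cong\pi_v$ at every finite place, but it does nothing to establish that isomorphism at the places in $S$. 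So the implication you are after---from $C$-arithmeticity to Clozel's full rationality statement---is not available, which is precisely why the paper only claims the reverse implication.
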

(In fact, Clozel conjectures much more than this---he conjectures that there is a motive whose local $L$-factors agree with those of $\pi\otimes|\cdot|^{(n-1)/2}$ at all finite places; the required Galois representation is then obtained as the $p$-adic realisation of this motive.)

By Proposition \ref{prop:nearequivforGLn} we see, since any automorphic representation of $\GL_{n}$ is nearly equivalent to an isobaric one, that Conjecture \ref{conj:Clozel} implies Conjecture \ref{conj:existence of Galois representations} for $\GL_{n}$, and in fact an examination of the proof shows that it implies Conjecture \ref{conj:existence of Galois representations - strong version with crystalline etc}. We claim that it also implies that $\pi$ is $C$-arithmetic; in fact, this follows at once from Proposition 3.1(iii) of \cite{MR1044819}. Thus for $\GL_{n}$ our conjectures follow from those of Clozel.

The reason that our conjectures are weaker than Clozel's conjectures
is that for groups other than $\GL_{n}$ we do not have as good an
understanding of the local Langlands correspondence---for a general
group~$G$ we cannot even \emph{formulate} a version of
Conjecture~\ref{conj:existence of Galois representations - strong
    version with crystalline etc} which includes behaviour at
the bad places without also formulating a precise local Langlands conjecture
in full generality. Even for $GL_n$ Clozel had to be careful, restricting
to isobaric representations in order not to make a conjecture which
was trivially false, and such phenomena would also show up in the general
case, and are typically even less well-understood here.

\subsection{}
In \cite{MR1729443} Gross presents a conjecture which assigns a Galois
representation to an automorphic representation on a group $G$ with
the property that any arithmetic subgroup is finite (in fact
Gross gives six conditions equivalent to this in Proposition 1.4 of
\cite{MR1729443}). We now discuss the relationship of
this conjecture to our conjectures. Gross' assumptions
imply that $G$ splits over a CM field $L$, and he also
assumes that~$G$ has a twisting element $\eta$ in the sense of
Definition \ref{twistingelement}. In fact, Gross has informed us that
one should in addition assume that the group $G$ is semisimple and
simply connected, so we make this assumption from now on. This
assumption in fact implies that one can take $\eta=\delta$ in the below, but
for those who want to be more optimistic than Gross we have kept
the two notations distinct in the below.

Let $V$ be an absolutely irreducible representation of $G$ over $\Q$
with trivial central character. Let $S$ be a finite set of primes of
size at least 2, containing all primes at which $G$ is ramified. For
each $\ell\notin S$ we let $K_{\ell}$ be a hyperspecial maximal compact
subgroup of $G(\Q_{\ell})$, and for each $\ell\in S$ we let $K_{\ell}$ be
an Iwahori subgroup of $G(\Q_{\ell})$. Let $K$ be
the product of the $K_\ell$. Then
$M(V,K)$ is the space of algebraic modular forms given by

\begin{align*}
M(V,K):=\{
f:G(\A_{\Q})/
(G(\R)_{+}&\times K)\to V:\\
&f(\gamma g)=\gamma f(g)\text{ for all }\gamma\in G(\Q), g\in G(\A_{\Q})
\} .
\end{align*}

Let $H_{S}$ be the unramified Hecke algebra---the restricted tensor
product of the
unramified Hecke algebras $H_{\ell}$ for each $\ell\notin S$. Let $H_{K}$ be
the full Hecke algebra, the tensor product of $H_{S}$ and the
Iwahori Hecke algebras $H_{\ell}$ at places $\ell$ in $S$. Let $A$ be
$H_{K}\otimes\Q[\pi_{0}(G(\R))]$. This acts on $M(V,K)$ (see section 6
of \cite{MR1729443}), and we let $N$ be a simple $A$-submodule of
$M(V,K)$. We assume that $N$ gives the Steinberg character on $H_{\ell}$
for all $\ell\in S$ (see section 12 of \cite{MR1729443}), and if $V$ is
trivial and $\prod_{l\in S}G(\Q_{l})$ is compact, we exclude the case
that $N$ is trivial.

By Proposition 12.3 of \cite{MR1729443}, $\End_{A}(N)$ is a CM field,
and by (7.4) of \cite{MR1729443}, $\pi_{0}(G(\R))$ acts on $N$ through
a character

$$\phi_{\infty}:\pi_{0}(G(\R))\to \{\pm 1\}\subset E^{\times}. $$

By Proposition 8.5 of \cite{MR1729443}, the simple submodules of
$N\otimes \C$ may be identified (compatibly with the actions of $H_K$)
with irreducible automorphic representations
$\pi=\pi_{f}\otimes\pi_{\infty}$ with $\pi_{\infty}\isoto V\otimes\C$,
and $\pi_{l}$ Steinberg for all $l\in S$. For all $l\notin S$, the
unramified local Langlands correspondence (i.e. the Satake
isomorphism) identifies the character of $T_{l}$ on $N$ with a
homomorphism $r_{N,l}:W_{\Q_{l}}\to\LG(\C)$, and $\pi_{l}$ corresponds
to this parameter under the local Langlands correspondence. Fix such a representation $\pi$.

Gross then makes the following conjecture (see Conjecture 17.2 as well
as (15.3) and (16.8) of \cite{MR1729443}) (note that while Gross
normalises his Weil groups so that an arithmetic Frobenius
element corresponds to a uniformiser, he also normalises his Satake
isomorphisms so that they are constructed with arithmetic Frobenii,
so the comments of Remark \ref{swapping geometric and arithmetic frobenius
  via an involution} apply):

\begin{conj}\label{conj:grossgaloisrepsconjecture}If $p$ is a prime,
  and $\iota:\C\isoto\Qpbar$, then there is a continuous Galois representation $$\rho_{N,\iota}:\Gal(\Qbar/\Q)\to\LG(\Qpbar) $$ satisfying \begin{itemize}
	\item If $l\notin S$, then $\rho_{N,\iota}|_{W_{\Q_{l}}}$ is $\Gdual(\Qpbar)$-conjugate to $\iota(r_{N,l})\otimes |\cdot|^{\eta-\delta}$.
	\item If $s_{\infty}$ is a complex conjugation in
          $\Gal(\Qbar/\Q)$, then $\rho_{N,\iota}(s_{\infty})$ is
          $\Gdual(\Qpbar)$-conjugate to
          $(\iota(\eta(-1)\phi_{\infty}(-1)),s_\infty)$.
\end{itemize}
	
\end{conj}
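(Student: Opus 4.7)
The plan is to deduce Conjecture \ref{conj:grossgaloisrepsconjecture} from Conjecture \ref{conj:existence of Galois representations} applied to a suitable $\pi$ arising from $N$. The first step is to verify that any such $\pi$ is $L$-algebraic. Since $\pi_{\infty}\isoto V\otimes\C$ with $V$ an algebraic (hence finite-dimensional) representation of $G_{\R}$, the archimedean component $\pi_{\infty}$ is cohomological---taking $U:=V^{\vee}$ in the definition. By Lemma \ref{lem:cohomogicalimpliesCalgebraic}, $\pi$ is $C$-algebraic, and since $G$ is semisimple and simply connected, half the sum of the positive roots lies in $X^{*}(T)$, so the $L$-\ and $C$-algebraic notions coincide. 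Conjecture \ref{conj:existence of Galois representations} then produces a Galois representation $\rho_{\pi,\iota}:\Gal(\overline{\Q}/\Q)\to\LG(\Qpbar)$.

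Next I would reconcile the conventions. Gross normalises both local class field theory and the Satake isomorphism using arithmetic Frobenii, while the paper uses geometric Frobenii throughout. By Remark \ref{swapping geometric and arithmetic frobenius via an involution}, the Galois representation predicted under Gross's conventions is $\rho_{\tilde{\pi},\iota}$, or equivalently the composition of $\rho_{\pi,\iota}$ with the Chevalley involution of $\LG$; take this as the definition of $\rho_{N,\iota}$.

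The unramified compatibility is then almost tautological. At each $l\notin S$, the $H_{l}$-eigencharacter of $N$ is by construction the character corresponding to $r_{N,l}$ under Gross's (arithmetic-Frobenius) version of the Satake isomorphism, which translates to the parameter $r_{\tilde{\pi}_{l}}$ in our conventions. Since $G$ is simply connected we have $\eta=\delta$, so the twist $|\cdot|^{\eta-\delta}$ in Gross's formulation is trivial and no extra bookkeeping arises from it; the required $\Gdual(\Qpbar)$-conjugacy follows immediately from the corresponding clause of Conjecture \ref{conj:existence of Galois representations}.

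The main obstacle will be the verification at complex conjugation. One must compute the archimedean Langlands parameter $r_{\pi_{\infty}}$ explicitly using that $\pi_{\infty}\isoto V\otimes\C$ is finite-dimensional and algebraic, and then evaluate $\alpha_{\infty}=\lambda_{\sigma}(i)\lambda_{\tau}(i)r_{\pi_{\infty}}(j)$ of Lemma \ref{cxconj}. For an irreducible algebraic representation of highest weight $\mu$, a careful tracking of the $\delta$-shift in the Harish--Chandra isomorphism identifies the product $\lambda_{\sigma}(i)\lambda_{\tau}(i)$ with $\eta(-1)\in\Gdual(\C)$, while $r_{\pi_{\infty}}(j)$ contributes the central sign $\phi_{\infty}(-1)$ arising from the action of $\pi_{0}(G(\R))$ on $V$. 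Since $\alpha_{\infty}$ has order dividing $2$, its $\Gdual(\C)$-conjugacy class is fixed by the Chevalley involution used to swap conventions, and combining these ingredients yields the element $(\iota(\eta(-1)\phi_{\infty}(-1)),s_{\infty})$ predicted by Gross. Pinning down the explicit character-theoretic identification with the correct signs, and confirming that the formulas in \cite{MR546608} for Langlands parameters of finite-dimensional representations produce exactly the shift structure claimed, is the portion of the argument requiring the most delicate bookkeeping.
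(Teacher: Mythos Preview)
Your approach is essentially the same as the paper's: show $\pi$ is $C$-algebraic (the paper simply asserts this, since it was already established by Gross; your route via cohomologicality is fine), pass to $L$-algebraic, and apply the main conjecture. The paper phrases the passage to $L$-algebraic as twisting by $|\cdot|^{\eta-\delta}$ via Proposition~\ref{prop:twistinglalgtocalg}, whereas you observe directly that $\eta=\delta$ in the simply connected case so no twist is needed; these are the same argument, and indeed the paper remarks just before stating the conjecture that one may take $\eta=\delta$.

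Two small points. First, you invoke Conjecture~\ref{conj:existence of Galois representations} (the weak form), but the paper deduces Gross's statement from Conjecture~\ref{conj:existence of Galois representations - strong version with crystalline etc}. The issue is that the weak form only gives compatibility outside \emph{some} finite set containing the ramified primes and the primes above~$p$, whereas Gross demands compatibility at every $l\notin S$; if $p\notin S$ this is not automatic from the weak form. Citing the strong form fixes this with no change to your argument. Second, for the image of complex conjugation the paper does not carry out the explicit computation you sketch but simply refers to \cite{grossodd}, where the identification of $\alpha_\infty$ with $(\eta(-1)\phi_\infty(-1),s_\infty)$ is worked out. Your outline of how that computation should go is reasonable, but since you flag it as the delicate part, you may as well cite that reference rather than reinvent it.
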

This conjecture follows from Conjecture \ref{conj:existence of Galois
  representations - strong version with crystalline etc}. Indeed, the
representation $\pi$ is $C$-algebraic, so by Proposition
\ref{prop:twistinglalgtocalg} $\pi\otimes |\cdot|^{\eta-\delta}$ is
$L$-algebraic. Applying Conjecture \ref{conj:existence of Galois
  representations - strong version with crystalline etc} gives
everything in Conjecture \ref{conj:grossgaloisrepsconjecture} (for the
description of complex conjugation, see \cite{grossodd}).

Note that Gross in fact conjectures something slightly stronger; he
shows that $\pi$ is $C$-algebraic, and in fact that $\pi$ is defined
over $E$, and conjectures that for any place $\lambda|p$ of $E$ there
is a natural Galois representation $\rho_{N,\lambda}:\Gal(\Qbar/\Q)\to
\LG(E_\lambda)$. As Gross has explained to us, this rationality
conjecture should follow
from the hypothesis that $\pi$ is Steinberg at two places, together
with local-global compatibility for the Galois representations at these places.

\subsection{}\label{section:examples}
We now discuss an example drawn from \cite{cht}. Let $F$
be a totally real field, and let $E$ be a quadratic totally imaginary
extension of $F$. Let $G$ be an $n$-dimensional unitary group over $F$
which splits over $E$, and which is compact (that is, isomorphic to
$U(n)$) at all infinite places. Then the dual
group of $G$ is $GL_{n}$, and if we let $\Gal(E/F)=\{1,c\}$, then the
$L$-group of $G$ is given
by $$\LG=\GL_{n}\rtimes\Gal(\overline{F}/F)$$
where $\Gal(\overline{F}/F)$ acts on $\GL_n$ via its
projection to $\Gal(E/F)=\{1,c\}$, with
$$x^{c}:=\Phi_{n}x^{-t}\Phi_{n}^{-1} $$ where
$\Phi_{n}$ is an anti-diagonal matrix with alternating entries $1$,
$-1$. Note that $\Phi_{n}^{-1}=\Phi_{n}^t=(-1)^{n-1}\Phi_{n}$.

By Proposition \ref{kernelorder2},
we
have $${}^{C}G=\left((\GL_{n}\times\Gm)/((-1)^{n-1},-1)\right)\rtimes\Gal(\overline{F}/F) $$ with
$\Gal(\overline{F}/F)$ acting
by $$(g,\mu)^{c}=(\Phi_{n}g^{-t}\Phi_{n}^{-1},\mu). $$

In Section 1 of \cite{cht} there is a definition of a group
$\mathcal{G}_{n}$. This group is  a semidirect product
$(\GL_{n}\times\Gm)\rtimes\Gal(\overline{F}/F)$, but with $\Gal(\overline{F}/F)$ acting
by $$(g,\mu)^{c}=(\mu g^{-t},\mu). $$ There is a
morphism $j:{}^CG\to\mathcal{G}_{n}$ defined thus.
For $(g,\mu)\in\GL_n\times\Gm$ and $\gamma\in\Gal(\overline{F}/E)$
we set
$$j((g,\mu)\times\gamma)=(g\mu^{1-n},\mu^{2(1-n)})\times\gamma$$
(and note that $j(((-1)^{n-1},-1)\times\gamma)=(1\times\gamma)$.
If $\tilde{c}\in\Gal(\overline{F}/F)$ has image $c\in\Gal(E/F)$
then we set
$$j(1\times\tilde{c})=(\Phi_n,(-1)^{n-1})\times\tilde{c}.$$
It is easily checked these determine a unique homomorphism
$j:{}^CG\to\mathcal{G}_n$. For $n>1$, $j$ is an isogeny
with kernel of order $n-1$.
Since $G$ is compact at infinity, any automorphic representation $\pi$
of $G(\A_F)$ is cohomological and thus by Lemma
\ref{lem:cohomogicalimpliesCalgebraic} is $C$-algebraic. Conjecture
\ref{conj:existence of Galois representations - C-group version}
predicts the existence of a Galois
representation\[\rho_\pi:\Gal(\overline{F}/F)\to{}^CG(\Qpbar)\]with
the property that $d\circ\rho_\pi$ is the cyclotomic character
$\varepsilon$ (with
$d$ as in the discussion preceding Conjecture \ref{conj:existence of
  Galois representations - C-group version}). One checks
easily that the composite
\[j\circ\rho_\pi:\Gal(\overline{F}/F)\to\mathcal{G}_n(\Qpbar)\] has
multiplier $\varepsilon^{1-n}$ (the multiplier of a representation
into $\mathcal{G}_n$ is its projection onto the $\Gm$ factor).

Now, under certain mild hypotheses on $G$ and $\pi$, we note that a
Galois representation $r'_\pi$ satisfying the properties imposed on $j\circ
r_\pi$ by Conjecture \ref{conj:existence of Galois representations -
  C-group version} is proved to exist in \cite{cht} and
\cite{twugk}. Specifically, everything except the form of complex
conjugation follows from Proposition 3.4.4 of \cite{cht} (although see
also Theorems 4.4.2 and Theorems 4.4.3 of \cite{cht} for related
results on $\GL_n$ whose notation may be easier to compare to the
notation used in this paper), and the form of complex conjugation
follows from Theorem 4.1 of \cite{twugk}.  (Note when comparing the
unramified places that by definition the local Langlands
correspondence $\rec(\pi_v)$ used in \cite{cht} is our $r_{\pi_v}$.)

Conversely, the constructions of \cite{cht}, when they
apply, actually imply our Conjecture
\ref{conj:existence of Galois representations - C-group version} for
$\pi$ (up to Frobenius semisimplification at unramified finite
places). To see this, note there is a morphism
$j':\mathcal{G}_n\times\GL_1\to{}^C G$ such that $j'\circ(j\times d)$
is the identity; concretely, $j'$ is defined on the identity component
by \[((h,\mu),\lambda)\mapsto
(h\lambda^{(n-1)/2},\lambda^{1/2})\](which is well-defined independent
of the choice of square root by the definition of~${}^CG$). Then we
may set $r_\pi=j'\circ (r'_\pi\times\varepsilon)$,
which is easily checked to have the required properties.

\bibliographystyle{amsalpha} 

\bibliography{buzzgee}

\end{document}